\title{Nonlinear stability of Gardner breathers}
\author{Miguel A. Alejo}
\address{Departamento de Matem\'atica, Universidade Federal de Santa Catarina, Brasil}
\email{miguel.alejo@ufsc.br}
\date{\today}
\subjclass[2000]{Primary 35Q51, 35Q53; Secondary 37K10, 37K40}
\keywords{ Gardner equation, modified KdV equation, breather, stability}
\chardef\bslash=`\\ 
\newtheorem{thm}{Theorem}[section]
\newtheorem{cor}[thm]{Corollary}
\newtheorem{lem}[thm]{Lemma}
\newtheorem{prop}[thm]{Proposition}
\newtheorem{defn}[thm]{Definition}
\theoremstyle{remark}
\newtheorem{rem}{Remark}[section]
\numberwithin{equation}{section}
\newcommand{\R}{\mathbb{R}}
\newcommand{\Z}{\mathbb{Z}}
\newcommand{\la}{\lambda}
\newcommand{\al}{\alpha}
\newcommand{\bt}{\beta}
\newcommand{\ga}{\gamma}
\newcommand{\spawn}{\operatorname{span}}
\newcommand{\ba}{\left( \begin{array}{c}}
\newcommand{\ea}{\end{array}\right)}
 \providecommand{\abs}[1]{\lvert#1 \rvert}
\newcommand{\be}{\begin{equation}}
\newcommand{\ee}{\end{equation}}
\newcommand{\bp}{\begin{proof}}
\newcommand{\ep}{\end{proof}}
\newcommand{\bel}{\begin{equation}\label}
\newcommand{\eeq}{\end{equation}}
\newcommand{\bea}{\begin{eqnarray}}
\newcommand{\eea}{\end{eqnarray}}
\newcommand{\bee}{\begin{eqnarray*}}
\newcommand{\eee}{\end{eqnarray*}}
\newcommand{\ben}{\begin{enumerate}}
\newcommand{\een}{\end{enumerate}}
\newcommand{\nonu}{\nonumber}
\newcommand{\ms}{\medskip}
\newcommand{\eval}[2][\right]{\relax
  \ifx#1\right\relax \left.\fi#2#1\rvert}
\let\abs=\envert
\begin{document}
\begin{abstract}
We show that breather solutions of the Gardner equation, a natural generalization of the KdV and mKdV
equations, are $H^2(\R)$ stable. Through a variational 
approach, we characterize Gardner breathers as minimizers of a new Lyapunov functional and we study the
associated spectral problem, through $(i)$ the analysis of the spectrum of 
explicit linear systems (\emph{spectral stability}), and $(ii)$  controlling  degenerated directions by  using low regularity conservation laws.



\end{abstract}
\maketitle \markboth{Stability of Gardner breathers} {Miguel A. Alejo}
\renewcommand{\sectionmark}[1]{}
\tableofcontents

\section{Introduction}

\subsection{Preliminaries}
 In this paper we consider the nonlinear stability of \emph{breathers} of the Gardner equation
\be\label{GE}
w_t +(w_{xx} +3\mu w^2+w^3)_x=0, \quad \mu\in\R\backslash\{0\},\qquad w(t,x) \in \R, \; (t,x) \in\R^2.
\ee

\ms
\noindent
Specifically, we present here a proof on the  stability in $H^2(\R)$ of Gardner breathers, showing that this stability is 
independent of the value of the parameter $\mu$, which controls the strength of the quadratic nonlinear part or KdV term $w^2$,
in its existence interval for \emph{real} Gardner breathers.
\ms 
\noindent

The Gardner equation \eqref{GE}  is a well-known \emph{completely integrable} model \cite{Ga,AC,La}, with infinitely many conservation laws and well-known 
(long-time) 
asymptotic behavior of its solutions obtained with the help of the inverse scattering transform \cite{GrSl}. As a physical model, \eqref{GE}
describes large-amplitude internal solitary waves, showing a dynamics
which can look rather different from the KdV form. On the other hand, solutions of \eqref{GE} are invariant under space and time translations. 
Indeed, for any $t_0, x_0\in \R$, $w(t-t_0, x-x_0)$ is also a solution. Note that \eqref{GE} is not scaling invariant. 
Moreover, \eqref{GE} is closely related to the modified Korteweg-de Vries (mKdV) equation
\be\label{mKdV} 
u_{t} +(u_{xx} + u^3)_x=0, \qquad u(t,x) \in \R, \;  (t,x) \in\R^2,
\ee 
\noindent
through the search of $L^{\infty}$-solutions. In fact, it is easy to see by substitution, that the following holds:

\begin{prop}\label{connect}
Let $u$ be a  solution of the mKdV equation \eqref{mKdV} with a nonvanishing boundary 
value or condition (NVBC) $\mu\in\R\backslash\{0\}$ at $\pm \infty$. Then $w(t,x):= u(t,x+3\mu^2t) - \mu$ is a  solution of the Gardner equation \eqref{GE}.
\end{prop}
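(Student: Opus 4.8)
The plan is to verify the assertion by direct substitution, as the phrase ``it is easy to see by substitution'' anticipates; there is no abstract machinery involved. The one conceptual point worth isolating in advance is \emph{why} the translation speed $3\mu^2$ is the correct one: shifting a solution of \eqref{mKdV} by the constant $-\mu$ turns the cubic nonlinearity into a Gardner-type nonlinearity \emph{plus} a spurious constant-coefficient advection term, and the Galilean boost $x\mapsto x+3\mu^2 t$ is tuned precisely to absorb that advection. Keeping track of this cancellation is the whole content of the proof.

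Concretely, I would introduce the moving coordinate $\xi:=x+3\mu^2 t$ and write $w(t,x)=u(t,\xi)-\mu$. Since the shift affects only the time variable, the chain rule gives $w_x=u_\xi$, $w_{xx}=u_{\xi\xi}$, $w_{xxx}=u_{\xi\xi\xi}$, while the temporal derivative picks up the frame speed, $w_t=u_t+3\mu^2 u_\xi$. Next I would write \eqref{GE} in expanded form
\[
w_t+w_{xxx}+6\mu\, w w_x+3\,w^2 w_x=0,
\]
and insert $w=u-\mu$ together with $w_x=u_\xi$. Expanding $6\mu(u-\mu)u_\xi$ and $3(u-\mu)^2 u_\xi$ produces, besides the genuine cubic term $3u^2u_\xi$, the lower-order pieces $6\mu\,uu_\xi$, $-6\mu\,uu_\xi$, and the constant-coefficient terms $-6\mu^2 u_\xi$, $+3\mu^2 u_\xi$.

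Collecting terms then finishes the argument. The two cross terms $6\mu\,uu_\xi-6\mu\,uu_\xi$ cancel, and the linear terms balance exactly: the frame-speed contribution $+3\mu^2 u_\xi$ coming from $w_t$ combines with $-6\mu^2 u_\xi+3\mu^2 u_\xi$ from the nonlinearity to give zero. What survives is $u_t+u_{\xi\xi\xi}+3u^2u_\xi=u_t+(u_{\xi\xi}+u^3)_\xi$, which vanishes because $u$ solves \eqref{mKdV} in the variable $\xi$. Equivalently, and perhaps more transparently, one can run the bookkeeping in reverse by substituting $u=w+\mu$ into $(u^3)_x$, which yields $(w^3+3\mu w^2)_x+3\mu^2 w_x$, the final linear term being removed by passing to the frame $\xi=x+3\mu^2 t$. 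Either way the ``obstacle'' is purely algebraic: one need only confirm that the coefficient $3\mu^2$ matches, and that the nonvanishing boundary value $\mu$ at $\pm\infty$ is exactly what makes $w=u-\mu$ decay, so that $w$ is a genuine (localized) solution of the Gardner equation.
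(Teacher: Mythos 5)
Your proof is correct and follows exactly the route the paper intends: the paper offers no written proof beyond the remark that the claim follows ``by substitution,'' and your computation carries out precisely that substitution, with the cancellations $6\mu u u_\xi - 6\mu u u_\xi = 0$ and $3\mu^2 - 6\mu^2 + 3\mu^2 = 0$ correctly verifying that the boost speed $3\mu^2$ absorbs the advection term. Nothing further is needed.
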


\medskip
The key characteristic of the  Gardner equation \eqref{GE} is that it contains a nonlinear part composed of a Korteweg-de Vries (KdV) quadratic
term $(w^2)_x$ and a positive modified KdV (mKdV) cubic term $(w^3)_x$. The competition between this nonlinear part and the linear  dispersive term
$w_{xxx}$ allows the existence of intricate soliton, multisolitons as well as exact  real-valued \emph{breather} solutions (see \eqref{GEBre}).
A \emph{soliton} is a localized, moving or stationary solution which maintains its form for all time. Similarly, a 
\emph{multi-soliton} is a (not necessarily) explicit solution describing the interaction of several solitons \cite{HIROTA1}.

\medskip
In the case of the Gardner equation \eqref{GE}, the profile of the soliton solution is slightly cumbersome, but it is still given explicitly  by the formula
\be\label{fexplicita2} w(t,x) := Q_{c,\mu} (x-ct), 
\qquad Q_{c,\mu} (s) :=\frac{c}{ \displaystyle{ \mu +\sqrt{\mu^2+\frac{c}{2}}\cosh(\sqrt{c}s) } }.\eeq
By substituting  \eqref{fexplicita2}  into  \eqref{GE}, one has that  $Q_{c,\mu}>0 $ satisfies the nonlinear elliptic equation
\be\label{eqQc}\begin{array}{ll}
Q_{c,\mu}'' -c\, Q_{c,\mu} + 3\mu Q_{c,\mu}^2+Q_{c,\mu}^3=0, \quad Q_{c,\mu}>0, \quad Q_{c,\mu}\in H^1(\R).
\end{array}\ee

This second order, elliptic equation is deeply related to the so-called variational structure of the soliton solution. To be more precise, 
it is well-known that for the Gardner equation the standard conservation laws at the $H^1$-level are the \emph{mass}
\be\label{M1}
M[w](t)  :=  \frac 12 \int_\R w^2(t,x)dx = M[w](0),
\ee 
and \emph{energy} 
\be\label{E2}
E_\mu[w](t)  :=  \frac 12 \int_\R w_x^2 -\mu\int_\R w^3 - \frac 14 \int_\R w^4= E_\mu[w](0),\ee
which is $H^1$-subcritical. For the Gardner equation \eqref{GE}, the Cauchy problem is globally well-posed at such a level of regularity or even better, 
see e.g. Alejo and Kenig-Ponce-Vega \cite{Ale11},\cite{KPV}. Note that these results are not trivial since \eqref{GE} is not scaling
invariant.  
 
\ms

Moreover, for a mKdV solution $u$ with NVBC $\mu$ we also have the natural conservation laws
\be\label{Mnv}
M_{nv}[u](t)  :=  \frac 12 \int_\R (u^2-\mu^2)dx = M_{nv}[u](0),
\ee 
and \emph{energy} 
\be\label{E3}
E[u](t)  :=  \frac 12 \int_\R u_x^2  - \frac 14 \int_\R (u^4-\mu^4)= E[u](0),\ee
which is $H^1$-subcritical. Using these conserved quantities, the variational structure of any Gardner soliton can be characterized as follows: 
there exists a suitable \emph{Lyapunov functional}, \emph{invariant in time} and such 
that the soliton $Q_{c,\mu}$ is a corresponding \emph{extremal point}. Moreover, it is a global minimizer under fixed mass. For the Gardner case, 
this functional is given by (see \cite{Benj} for the mKdV case)
\be\label{H0mk}
\mathcal{H}_0[w](t) = E_\mu[w](t) + c \, M[w](t),
\ee
\noindent
where $c>0$ is the scaling of the solitary wave, and $M[w]$, $E_\mu[w]$ are given in (\ref{M1}) and (\ref{E2}). 
Indeed, it is easy to see that for any $z(t)\in H^1(\R)$ small,
\be\label{Expa1mk}
\mathcal{H}_0[Q_{c,\mu}+z](t)  =  \mathcal{H}_0[Q_{c,\mu}] + \int_\R z(Q_{c,\mu}''-cQ_{c,\mu} +3\mu Q_{c,\mu}^2 +Q_{c,\mu}^3) +  O(\|z(t)\|_{H^1}^2).
\ee
\noindent
The zero order term  above is independent of time, while the first order term in $z$ is zero from (\ref{eqQc}), proving the critical character of 
$Q_{c,\mu}$. 

\medskip

\subsection{Breathers and their stability}
Besides these soliton solutions of the Gardner equation \eqref{GE}, it is possible to find another big set of explicit and oscillatory solutions, 
known in the physical and mathematical literature as the \emph{breather} solution, and which is a periodic in time, spatially localized real function. 
Although there is no universal definition 
for a breather, we will adopt the following convention, that will match the  Gardner, mKdV and also sine-Gordon cases (see \cite{AMP1}).

\begin{defn}[Aperiodic breather]\label{DEF_B}
We say that $B=B(t,x)$ is a breather solution for a particular one-dimensional dispersive equation 
if there are $T>0$ and $L=L(T)\in \R$ such that, for all $t\in \R$ and $x\in \R$, one has 
\be\label{B_def}
B(t+T,x) = B(t,x-L),
\ee
and moreover, the infimum among times $T>0$ such that property \eqref{B_def} is satisfied for such a time $T$ is uniformly positive in space. 
\end{defn}

\begin{rem}
Observe that the last condition ensures that solitons (and multisolitons) are not breathers, since e.g. $Q_{c,\mu}(x-c (t+T)) = Q_{c,\mu}(x-L -ct)$ for
$L:=cT$ but $T$ can be any real-valued time.\footnote{In the case of NLS equations and their solitons, 
Definition \ref{DEF_B} include them because of the $U(1)$ invariance.}
\end{rem}

For the Gardner equation \eqref{GE}, the breather solution  can be obtained by using different methods 
 (e.g. Inverse Scattering, Hirota method, etc), and its expression is characterized by the introduction of the parameter $\mu$ which
controls the quadratic nonlinearity in \eqref{GE}. 
\begin{defn}[Gardner breather]\label{BreatherGE} Let $\al, \bt,\mu \in \R\backslash\{0\}$ such that $\Delta=\al^2+\bt^2-2\mu^2>0$, 
and $x_1,x_2\in \R$. The real-valued  breather solution of the Gardner equation \eqref{GE} is given explicitly by the formula
\be\label{GEBre}B_\mu\equiv B_{\al, \bt,\mu}(t,x;x_1,x_2)   
 :=   2\sqrt{2}\partial_x\Bigg[\arctan\Big(\frac{G_{\al, \bt,\mu}(t,x)}
{F_{\al, \bt,\mu}(t,x)}\Big)\Bigg],
\ee
with $y_1$ and $y_2$ 
\be\label{y1y2GE}
y_1 = x+ \delta t + x_1, \quad y_2 = x+ \ga t + x_2, \quad \delta := \al^2-3\bt^2, \quad  \ga :=3\al^2-\bt^2,
\ee and
\[
\begin{aligned}
G_\mu\equiv G_{\al, \bt,\mu}(t,x) & :=   \frac{\bt\sqrt{\al^2+\bt^2}}{\al\sqrt{\Delta}}\sin(\al y_1) -\frac{\sqrt2 \mu\bt[\cosh(\bt y_2)+\sinh(\bt y_2)]}{\Delta},\\
F_\mu\equiv F_{\al, \bt,\mu}(t,x) & :=   \cosh(\bt y_2)-\frac{\sqrt2\mu\bt[\al\cos(\al y_1)-\bt\sin(\al y_1)]}{\al\sqrt{\al^2+\bt^2}\sqrt{\Delta}}.
\end{aligned}
\]
\end{defn}

\ms
\begin{rem}
This is a four-parametric solution, with two scalings ($\al,\bt$) and two shift translations ($x_1,x_2$). 
Note that we impose $\al^2+\bt^2-2\mu^2>0,$ since we deal with  real-valued solutions. Therefore $\mu\in(0,\mu_{max})$, where

\be\label{mumax}
\mu_{max}:=\sqrt{\frac{\al^2+\bt^2}{2}}.\\
\ee

\medskip
\bigskip

\noindent
Moreover from \eqref{GEBre} one has, for any $k\in \Z$,
\be\label{dege}
 B_{\al,\bt,(-1)^k\mu} (t,x; x_1 + \frac{k\pi}{\al}, x_2)  =  (-1)^k B_{\al,\bt,\mu} (t,x; x_1, x_2),
\ee
which are also solutions of (\ref{GE}). This identity reveals the periodic character of the first translation parameter $x_1$, coupled this time
to the parameter $(-1)^k\mu,~~k\in \Z$.
\end{rem}

\medskip

\begin{rem}
 Note  that we can take the limit when $\al\rightarrow0$ in \eqref{GEBre}, obtaining the so call \emph{double pole} solution for the Gardner equation \eqref{GE}, 
 which it is a natural generalization of the well-known double pole solution of mKdV:
 \end{rem}
\begin{defn}\label{DPoleGE} Let $\bt,\mu \in \R\backslash\{0\}$ such that $\Delta_0=\bt^2-2\mu^2>0$, 
and $x_1,x_2\in \R$. The  real-valued double pole solution of the Gardner equation \eqref{GE} is given explicitly by the formula
\be\label{GEdpole}B_{\bt,\mu}(t,x)   :=   \partial_x \tilde B :=   2\sqrt{2}\partial_x\Big[\arctan\Big(\frac{G_{ \bt,\mu}(t,x)}
{F_{\bt,\mu}(t,x)}\Big)\Big],
\ee
with
\bee
G_{\bt,\mu}(t,x) & := &  \frac{\bt^2y_1}{\sqrt{\Delta_0}} -\frac{\sqrt2 \mu\bt[\cosh(\bt y_2)+\sinh(\bt y_2)]}{\Delta_0},\\
F_{\bt,\mu}(t,x) & := &  \cosh(\bt y_2)-\frac{\sqrt2\mu[1-\bt y_1]}{\sqrt{\Delta_0}},
\eee
and where now $y_1=x-3\bt^2t+x_1$ and $y_2=x-\bt^2t+x_2$.
\end{defn}

\noindent
Note that Gardner breather solutions \eqref{GEBre} are periodic in time, but not in space. 
Additionally, every Gardner  breather satisfies Definition \ref{DEF_B} with $T=\frac{2\pi}{\al(\ga-\delta)}>0$ and $L=-\ga T$. 
A simple but very important remark is that $\delta \neq \ga$, 
for all values of $\al$ and $\beta$ different from zero. This means that the variables  $x+\delta t$ and $x+\ga t$ are {\bf always independent}, 
a property that characterizes breather solutions, which is not satisfied by standard solitons.  On the other hand, $-\ga$ will be for us the 
\emph{velocity} of the breather solution, since it corresponds to the velocity of the \emph{carried hump} in the breather profile. 
Note additionally that breathers have to be considered as \emph{bound states}, since they do not decouple into simple solitons as time evolves. 

\ms

For the Gardner equation, the breather solution was discovered by \cite{PeGr, Ale0}, using the IST. These solutions have become a canonical example  
of  complexity in nonlinear integrable systems \cite{La, AC}. Moreover, 
it is interesting to point out that mKdV and Gardner  breather solutions have also been considered by Kenig, Ponce and Vega and Alejo respectively,
in their proofs of the non-uniform continuity of the mKdV and Gardner flows in the Sobolev spaces $H^s$, $s<\frac 14$ \cite{KPV2, Ale1}. 

\ms

If one studies  perturbations of solitons in \eqref{GE}, \eqref{mKdV} and more general equations,  the concepts of \emph{orbital}, 
and \emph{asymptotic stability} emerge naturally. In particular, since energy and mass are conserved quantities, 
it is natural to expect that solitons are stable in a suitable energy space. Indeed, $H^1$-stability of mKdV and more general 
solitons and multi-solitons has been considered e.g. in Benjamin \cite{Benj}, 
Bona-Souganidis-Strauss \cite{BSS}, Weinstein \cite{We2}, Maddocks-Sachs \cite{MS}, Martel-Merle-Tsai \cite{MMT}, Martel-Merle \cite{MMcol2} 
and Mu\~noz \cite{Mu}. $L^2$-stability of KdV solitons has been proved by Merle-Vega \cite{MV}.
Moreover, asymptotic stability properties for gKdV equations have been studied by Pego-Weinstein \cite{PW} and Martel-Merle
\cite{MMarma,MMnon}, among many other authors. 

\medskip

The underlying question is then the study of the corresponding \emph{stability} of these breather solutions. A first step in that direction
was already done in \cite{AM} (see also \cite{AM0,AMP1,AMP2}), where the nonlinear stability of mKdV breathers was presented. Recent studies
about the stability-instability of these breather structures are \cite{Mu3} and \cite{BPSS}. Other references dealing with similar problems on
stability/instability of coherent structures are  Kowalczyk-Martel-Mu\~noz \cite{KMM1} and \cite{KMM2}, Comech-Cuccagna-Pelinovsky \cite{CoPe},
 Cuccagna-Pelinovsky-Vougalter \cite{CuPeVou},  Grillakis-Shatah-Strauss \cite{GSS}, 
Howard-Zumbrum \cite{HowZu}, Kapitula \cite{Kap1}, \cite{Kap2}, Kapitula-Kevrekidis-Sandstede \cite{KapKeSa}, Kapitula-Promislow \cite{KaPro},
Kaup-Yang \cite{KauYa}, Sandstede \cite{Sand}, Zumbrum \cite{Zu}, Yang \cite{Ya1} and \cite{Ya2}.

\subsection{Main Results}
In this paper, we show a positive answer to the question of the stability of Gardner breathers. In fact, our main result is stated in short as follows:

\ms

\begin{thm}\label{T1p8}
Gardner breathers \eqref{GEBre} are orbitally stable for $H^2$-perturbations, whenever the parameter $\mu\in(0,\mu_{max})$, with $\mu_{max}$ as in \eqref{mumax}.
\end{thm}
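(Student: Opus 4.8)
The plan is to adapt the variational framework for mKdV breathers from \cite{AM} to the Gardner setting, exploiting the higher conservation laws of \eqref{GE}. The first step is to single out, beyond the mass \eqref{M1} and energy \eqref{E2}, a third conserved quantity $F_\mu[w]$ at the $H^2(\R)$ level (quadratic in $w_{xx}$, with $\mu$-dependent lower-order nonlinear corrections), and to assemble the Lyapunov functional
\[
\mathcal{H}[w] := F_\mu[w] + \bt_1 E_\mu[w] + \bt_2 M[w],
\]
where the coefficients $\bt_1=\bt_1(\al,\bt,\mu)$ and $\bt_2=\bt_2(\al,\bt,\mu)$ are tuned so that the Euler--Lagrange equation $\mathcal{H}'[B_\mu]=0$ exactly reproduces the fourth-order nonlinear elliptic equation satisfied at each time by the breather \eqref{GEBre}. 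Since $F_\mu$, $E_\mu$ and $M$ are individually conserved, $\mathcal{H}$ is invariant in time and $B_\mu$ is one of its critical points.

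The second step is to compute the second variation of $\mathcal{H}$ at $B_\mu$, producing a linearized operator $\mathcal{L}_t$: a fourth-order Schr\"odinger-type operator whose potential is built from the breather and is therefore periodic in time. To absorb the two-parameter symmetry group (the shifts $x_1,x_2$), I would introduce time-dependent modulation parameters and decompose a perturbation as $w = B_\mu + z$ with $z$ orthogonal to the explicit kernel directions $\partial_{x_1}B_\mu$ and $\partial_{x_2}B_\mu$. The crux is then a coercivity estimate: that the quadratic form associated with $\mathcal{L}_t$ bounds $\|z\|_{H^2}^2$ from below on the orthogonal complement of the kernel, uniformly in $t$ and in $\mu\in(0,\mu_{max})$.

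Establishing this coercivity is the main difficulty, and it is where the two ingredients announced in the abstract enter. The operator $\mathcal{L}_t$ is generically not positive definite, so one must first \emph{(spectral stability)} determine the spectrum of the associated explicit linear system in order to locate its negative and degenerate directions precisely; and then \emph{(control of degenerate directions)} remove the dangerous directions using the low-regularity conservation laws, i.e. by imposing the constraints coming from conservation of $M$ and $E_\mu$ and projecting onto a finite-codimension subspace on which positivity holds, in the spirit of Weinstein \cite{We2}. The $\mu$-dependence makes this step delicate: the quadratic KdV term perturbs the pure mKdV spectral picture, and one must verify that the spectral gap survives throughout the entire existence interval rather than collapsing as $\mu\to\mu_{max}$ (where $\Delta\to 0$) or as $\mu\to 0$.

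Once uniform coercivity is secured, orbital stability follows by a now-standard Grillakis--Shatah--Strauss type argument \cite{GSS}: expanding the conserved functional and using positivity on the good subspace gives $\mathcal{H}[B_\mu+z]-\mathcal{H}[B_\mu]\gtrsim \|z\|_{H^2}^2$, and since $\mathcal{H}$ is time-invariant while the modulation keeps $z$ orthogonal to the kernel, a continuity/bootstrap argument shows that the $H^2$-distance from the perturbed solution to the breather orbit stays small for all time, which is precisely the assertion of Theorem~\ref{T1p8}.
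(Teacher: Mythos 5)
Your proposal follows essentially the same route as the paper: the Lyapunov functional you describe is exactly the paper's $\mathcal H_\mu[w]=F_\mu[w]+2(\bt^2-\al^2)E_\mu[w]+(\al^2+\bt^2)^2M[w]$, whose Euler--Lagrange equation is the fourth-order elliptic equation \eqref{EcBGEfinal} satisfied by $B_\mu$, and the stability argument proceeds as you outline via modulation orthogonal to $\partial_{x_1}B_\mu,\partial_{x_2}B_\mu$, a spectral analysis of the linearized operator (the paper shows a unique negative eigenvalue via a Wronskian/oscillation criterion and handles it through the Weinstein sign condition $\int_\R B_{0,\mu}B_\mu>0$), control of the remaining degenerate direction by mass conservation, and a bootstrap. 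Your concern about uniformity of the coercivity over $\mu\in(0,\mu_{max})$ is precisely what the paper verifies by checking the relevant sign conditions on the whole interval, including the limits $\mu\to0^+$ and $\mu^2\to\mu_{max}^{2-}$.
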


\ms

A more precise version of this theorem is given in Theorem \ref{T1gardner}. We will see from the variational characterization of Gardner breathers 
(see Sect.\ref{5}) that the Sobolev  space $H^2$ will arise naturally, since the Lyapunov functional that will have Gardner breathers as minimizers, will
be defined in that space. 


\ms



For the proof of the previous theorem, we will follow some steps introduced in \cite{AM} for mKdV breathers. This approach in the mKdV setting
is far to be trivial when we deal with Gardner breathers,
since many of \emph{at hand} proofs presented for mKdV breathers, are no longer practical as a consequence of the complicated functional form of breather solutions like \eqref{GEBre}. 
Therefore, we have to resort to the deep integrability structure of the \eqref{GE} in order to perform these proofs, by using adapted identities for the
Gardner breather and new nonlinear identities (see Lemma \ref{Id1}). Moreover, we are able to compute explicitly the mass, the energy
and the third conserved quantity defined in $H^2$ of any Gardner breather
solution \eqref{GEBre}, showing explicitly the nonlinear dependence on the parameter $\mu$.

\ms

The main steps of the proof are the following. Firstly, we show that Gardner breathers \eqref{GEBre} satisfy a fourth-order, 
nonlinear ODE. More precisely, every Gardner breather verifies 
\bea\label{BG4thODE}
& J_\mu[B_\mu]:=& B_{\mu,4x}  -   2(\beta^2 -\alpha^2) (B_{\mu,xx} +\mu B_\mu^2 + B_\mu^3) +  
(\alpha^2 + \beta^2)^2 B_\mu + 5 B_\mu B_{\mu,x}^2 + 5B_\mu^2 B_{\mu,xx}
\nonu\\
&&+ \frac 32 B_\mu^5  + 5\mu B_{\mu,x}^2  + 10\mu B_\mu B_{\mu,xx} + 10\mu^2 B_\mu^3 + \frac{15}{2} \mu B_\mu^4  =0.
\eea
\noindent
(cf. equation \eqref{EcBGEfinal}). This result for Gardner breathers is, as far as we know, not present in the literature, althought similar ones were obtained
in \cite{AM,AMP1,AMP2} for the case of mKdV breathers in the line and periodic case and also for sine-Gordon breathers respectively. The proof of this identity 
is based on the close connection between Gardner 
and mKdV with NVBC equations (see Proposition \ref{connect}). That is, we are going to prove that the above identity \eqref{BG4thODE} for Gardner breathers 
is related with an equivalent identity \eqref{GBnvbc} for mKdV breathers with NVBC, that will be easier to be proved, by using the explicit form of the breather 
with NVBC of mKdV, and several new identities related to the structure of the breather. 

\ms

It seems that this equation cannot be obtained from the original approach by Lax \cite{LAX1}, since the dynamics do not decouple in time. 
As far as we know, this is the first time that the previous equation is proved for breathers of the Gardner (and also for mKdV with NVBC \eqref{mKdV})
equation \eqref{GE}. 

\medskip

As second step, we show the variational structure of  Gardner breathers \eqref{GEBre}, 
with the introduction of a new Lyapunov functional (see \eqref{LyapunovGE} for further details)
\ms
\bea\label{LyapunovGEintro}
\mathcal H_\mu[w](t) := F_\mu[w](t) + 2(\bt^2-\al^2)E_\mu[w](t) + (\al^2 +\bt^2)^2 M[w](t),
\eea
\noindent
well-defined in the $H^2$-topology, and for which Gardner breathers are not only \emph{extremal points}, but also \emph{local minimizers}, 
up to  symmetries. This functional will control the perturbative terms and the instability directions arising during of the dynamics, 
the latter as a consequence of the symmetries satisfied by  \eqref{GE}. We will study spectrally the linearized operator around
Gardner breather solutions and we will discover, as in the mKdV case, that it has only one negative eigenvalue. We will prove that by
using new identities which simplify the computation of the Wronskian in the kernel elements of this operator (see Greenberg \cite{Gr} for further details
of the theory  dealing with fourth order eigenvalue problems). This strategy was used first by
Weinstein in \cite{We1}. 



\medskip


\subsection{Organization of this paper}
In Sect.\ref{2} we study generalized Weinstein conditions and we present
some nonlinear identities and stability tests satisfied by Gardner breathers. In Sect.\ref{3} we prove that any 
Gardner and  mKdV (with NVBC) breather solutions satisfy a fourth order nonlinear ODE, which
characterizes them. In Sect.\ref{5} we present a variational characterization of the Gardner breather,
by introducing a new Lyapunov functional which controls the dynamic of small perturbations in the stability problem. 
In Sect.\ref{SpecAn} we focus on the spectral properties of a linear self-adjoint operator related
to the Gardner breather solution. Finally in Sect.\ref{6} we present a detailed version of Theorem \ref{T1p8} and a sketch of its proof.
\bigskip
 \noindent
 
{\bf Acknowledgments.} 
I would like to thank Claudio Mu\~noz and Jos\'e M. Palacios for many richful discussions and
comments on a first version of this paper. I also would like to thank to the Departamento de Ingenier\'ia Matem\'atica (DIM) of U. Chile,
where part of this work was completed, for its kind hospitality and support. Finally I 
would also like to acknowledge insightful comments made by the anonymous referee which improved an earlier version of this work.

\bigskip

\medskip

\section{Nonlinear identities and Weinstein conditions}\label{2}

\medskip

The aim of this section is to get stability tests by computing  generalized Weinstein conditions for any Gardner breather $B_{\mu}$ \eqref{GEBre}. 
We begin with the simpler case of the Gardner soliton \eqref{fexplicita2}, where the mass (\ref{M1}) and the energy (\ref{E2}) are given by the 
quantities\footnote{See \cite[Eqn. 2.443-1-2-3]{GradRyz} for further details in order to compute  these integrals explicitly.}

\be\label{MsolG}
M[Q_{c,\mu}]  :=  2\sqrt{c}-2\sqrt{2}\mu\arctan\Big[\frac{\sqrt{c}}{\sqrt{2}\mu}\Big],
\ee 
\noindent
\text{and}
\be\label{EQc}
E_\mu[Q_{c,\mu}] := -\frac 23 c^{3/2} + 4\mu^2\sqrt{c} - 4\sqrt{2}\mu^3\arctan[\frac{\sqrt{c}}{\sqrt{2}\mu}],
\ee
which reduce,  when $\mu=0$, to the well known mass and energy of the mKdV soliton solution \cite[eqn. (2.1)]{AM}. From these expressions we
see explicitly the coupling between the soliton scaling  $c$ and the parameter $\mu$ in the computed  mass and the energy. Note that 
the Weinstein condition \cite{We2} is now, for $c>0,~\mu\in\R\backslash\{0\}$,
\be\label{WC}
\partial_c M[Q_{c,\mu}] = \frac{\sqrt{c}}{c+2\mu^2}>0.
\ee
This inequality guarantees the nonlinear stability of the Gardner soliton \eqref{fexplicita2}. Moreover note that the same condition for the
energy of the Gardner soliton 
$\partial_c E_\mu[Q_{c,\mu}]=-\frac{c^{3/2}}{c+2\mu^2}$ does not vanish either.

\ms

Now, we approach  the case of Gardner breathers.  Firstly  we present the following identity 
for solutions of the Gardner equation and which will be useful when computing the mass and energy of the breather solution
(see appendix \ref{Id_BilinearGE} for a proof):

\begin{lem}\label{Id_GEq} Let $w(t,x):=\sqrt{2}i\partial_x\log(\frac{F_\mu - iG_\mu}{F_\mu + iG_\mu})$ be any Gardner solution of \eqref{GE}. Then\footnote{Note here that $F_\mu,G_\mu$ are not necessarily 
the same ones introduced in \eqref{GEBre}.} 

\be\label{Id_B2}
w^2 =  2\frac{\partial^2}{\partial x^2}\log(G_\mu^2 + F_\mu^2) - 2\mu w.
\ee
\end{lem}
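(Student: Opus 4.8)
My plan is to turn the claim into a single real-valued (Hirota-type) bilinear identity and then to extract that identity from the fact that $w$ solves \eqref{GE}. Write $H := F_\mu + i G_\mu$, so that $N := F_\mu^2 + G_\mu^2 = |H|^2$ and the hypothesis reads $w = \sqrt{2}\, i\, \partial_x\log(\bar H/H)$. Expanding the logarithmic derivative gives
\[
w = 2\sqrt{2}\,\frac{F_\mu\, G_{\mu,x} - G_\mu\, F_{\mu,x}}{N} = 2\sqrt{2}\,\ima\!\left(\frac{H_x}{H}\right),
\]
which in particular confirms that $w$ is real-valued and reproduces the structural form of \eqref{GEBre}.

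Next I would set $\Phi := H_x/H$ and write $a := \re\Phi$, $b := \ima\Phi = (F_\mu G_{\mu,x} - G_\mu F_{\mu,x})/N$, so that $w = 2\sqrt{2}\,b$. Since $N$ is real and positive, $\partial_x\log N = \Phi + \bar\Phi = 2a$, hence $2\,\partial_x^2\log N = 4\,a_x$; and the Riccati-type relation $\Phi_x = H_{xx}/H - \Phi^2$ gives $a_x = \re(H_{xx}/H) - (a^2 - b^2)$. Substituting $w = 2\sqrt{2}\,b$, the claimed identity $w^2 = 2\,\partial_x^2\log N - 2\mu w$ becomes, after taking real parts and cancelling,
\[
|\Phi|^2 + \sqrt{2}\,\mu\, b = \re\!\left(\frac{H_{xx}}{H}\right),
\]
which, multiplying through by $N$, is the second-order bilinear identity
\[
F_\mu F_{\mu,xx} + G_\mu G_{\mu,xx} - F_{\mu,x}^2 - G_{\mu,x}^2 = \sqrt{2}\,\mu\,(F_\mu G_{\mu,x} - G_\mu F_{\mu,x}).
\]
I will refer to this last relation as $(\star)$. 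Everything up to this point is elementary; the same reduction also follows, if one prefers, by clearing the denominator $N^2$ and using Lagrange's identity $(F_\mu G_{\mu,x}-G_\mu F_{\mu,x})^2 + (F_\mu F_{\mu,x}+G_\mu G_{\mu,x})^2 = N(F_{\mu,x}^2+G_{\mu,x}^2)$.

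The crux is therefore to prove $(\star)$. This is not an identity for arbitrary $F_\mu, G_\mu$: it must encode that $w$ actually solves \eqref{GE}, and since $(\star)$ carries no time derivative while \eqref{GE} mixes $t$ and $x$, it cannot be obtained by a single integration of the equation. Instead I would recognize $(\star)$ as the spatial member of the Hirota bilinear system attached to \eqref{GE}, which the tau-functions $F_\mu, G_\mu$ of any such Gardner solution satisfy by construction; concretely, one inserts the $\arctan$ representation into \eqref{GE} and separates the resulting bilinear equations, the quadratic (KdV) nonlinearity $3\mu w^2$ being exactly what I expect to produce the $\mu$-dependent right-hand side of $(\star)$ that is absent in the mKdV case. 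Equivalently, one may transport the corresponding mKdV-with-NVBC bilinear identity to \eqref{GE} through Proposition \ref{connect}. I expect this final step --- disentangling the bilinear system of \eqref{GE} and checking that the coefficient of the first-order term is precisely $\sqrt{2}\,\mu$ --- to be the main obstacle, since it is here that the integrable structure, rather than soft algebra, is genuinely used.
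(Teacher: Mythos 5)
Your proposal is correct and follows essentially the same route as the paper: with $\mathbf{F} := F_\mu + iG_\mu$ and $\mathbf{G} := \overline{\mathbf{F}}$, your identity $(\star)$ is exactly the second Hirota bilinear equation $D_x^2(\mathbf{G}\mathbf{F}) - i\sqrt{2}\,\mu\, D_x(\mathbf{G}\mathbf{F}) = 0$ that the paper's Appendix~\ref{Id_BilinearGE} extracts by substituting the same ansatz into \eqref{GE}, and your Riccati/real--imaginary-part reduction is the paper's identity $D_x^2(\mathbf{G}\mathbf{F})/(\mathbf{G}\mathbf{F}) = \partial_x^2\log(\mathbf{G}\mathbf{F}) + \big(\partial_x\log(\mathbf{G}/\mathbf{F})\big)^2$ run in reverse. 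The step you defer as the ``main obstacle'' --- separating \eqref{GE} into its bilinear system and checking that the first-order coefficient is $i\sqrt{2}\,\mu$ --- is precisely the (asserted, routine) opening step of the paper's proof, so your plan closes the argument exactly as the paper does.
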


\ms
\noindent
This result allows us to compute explicitly the mass of any Gardner breather:

\begin{lem}\label{ME} Let $B_{\mu}=B_{\al,\bt,\mu}$ be any Gardner breather, for $\al, \bt,\mu$ as in definition \eqref{GEBre}. 
Then, the mass of $B_\mu$ is
\be\label{MassBG}
 M[B_\mu] := 4\bt + 2\sqrt{2}\mu\arctan\Big[\frac{2\sqrt{2}\mu\bt}{\Delta}\Big].
\ee
\end{lem}
\begin{proof}
It follows directly by using the above identity \eqref{Id_B2} and substitution 
in the definition \eqref{M1}. In fact, we obtain:

\[
M[B_\mu] = \frac{1}{2}\int_{\R}B_\mu^2 = 4\bt + 2\sqrt{2}\mu\arctan\Big[\frac{2\sqrt{2}\mu\bt}{\Delta}\Big] .
\]

\end{proof}

\begin{rem}
Note that as we could expect, after the Gardner soliton case, the mass of any Gardner breather  depends 
on the scalings  $\al,\beta$, and the parameter $\mu$. This dependence slightly differs
from the exclusive dependence of the mass of any mKdV breather on the second scaling parameter $\beta$.
\end{rem}


From the involved integral in \eqref{MassBG}, we can define the \emph{partial} mass of any Gardner breather
as follows:
\begin{defn}\label{MasssBG1} Let $B_{\mu}=B_{\al,\bt,\mu}$ be any Gardner breather, for $\al, \bt,\mu$ as in definition \eqref{GEBre} 
(such that $\Delta=\al^2+\bt^2-2\mu^2>0$). Then, we define the \emph{partial} mass associated to any Gardner breather as:
\begin{align} 
 \mathcal M_{\mu}(t,x)\equiv\mathcal M_{\al,\bt,\mu}(t,x)  &:= \frac 12\int_{-\infty}^x B_{\mu}^2(t,s; x_1,x_2)ds \nonu\\
 &~= 2\bt + \partial_x \log(G_\mu^2 + F_\mu^2)(t,x)-2\sqrt{2}\mu\arctan\Big(\frac{G_\mu(t,x)}{F_\mu(t,x)}\Big).
\end{align}
\end{defn} 

\ms
\noindent

A direct consequence of the above results are the following generalized Weinstein conditions:

\medskip
\begin{cor}\label{WCcor} Let $B_\mu= B_{\al,\bt,\mu}$ be any Gardner breather of the form \eqref{GEBre}. Given $t\in \R$ fixed, let
\be\label{LAB}
\Lambda_\al B_\mu := \partial_\al B_\mu, \quad \hbox{ and }\quad  \Lambda_\bt B_\mu := \partial_\bt B_\mu.
\ee
Then $\forall\mu\in(0,\mu_{max})$  both functions $\Lambda_\al B_\mu$ and $\Lambda_\bt B_\mu$ are in the  Schwartz class for the spatial variable and
they satisfy the identities

\be\label{LAB1a}
\partial_\al M[ B_\mu]  = \int_\R B_\mu \Lambda_\al B_\mu = -\frac{16\mu^2\bt\al}{\Delta^2 + 8\mu^2\bt^2}<0,\\
\ee

and

\be\label{LAB2b}
\partial_\bt M[ B_\mu]   = \int_\R B_\mu \Lambda_\bt B_\mu = 4\Big(\frac{\Delta^2+2\mu^2\Delta +4\mu^2\bt^2}{\Delta^2 + 8\mu^2\bt^2}\Big)>0,\\
%
\ee

\ms
independently  of time. 
\end{cor}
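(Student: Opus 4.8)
The plan is to compute the two derivatives $\partial_\al M[B_\mu]$ and $\partial_\bt M[B_\mu]$ directly from the explicit mass formula \eqref{MassBG}, and to separately verify that these coincide with the integral expressions $\int_\R B_\mu \Lambda_\al B_\mu$ and $\int_\R B_\mu \Lambda_\bt B_\mu$. The first task is the more elementary one: starting from
\be\label{MplanBG}
M[B_\mu] = 4\bt + 2\sqrt{2}\mu\arctan\Big[\frac{2\sqrt{2}\mu\bt}{\Delta}\Big], \qquad \Delta = \al^2+\bt^2-2\mu^2,
\ee
I would differentiate with respect to $\al$ and $\bt$ using $\partial_\al \Delta = 2\al$ and $\partial_\bt \Delta = 2\bt$. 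Writing $u := \frac{2\sqrt{2}\mu\bt}{\Delta}$, one has $\partial_\bullet \arctan(u) = \frac{\partial_\bullet u}{1+u^2}$, and $1+u^2 = \frac{\Delta^2 + 8\mu^2\bt^2}{\Delta^2}$, so every denominator will collapse to $\Delta^2 + 8\mu^2\bt^2$, matching the stated right-hand sides. For the $\al$-derivative the $4\bt$ term contributes nothing and $\partial_\al u = -\frac{2\sqrt2\mu\bt \cdot 2\al}{\Delta^2}$, which after multiplication by $\frac{\Delta^2}{\Delta^2+8\mu^2\bt^2}$ yields exactly $-\frac{16\mu^2\bt\al}{\Delta^2+8\mu^2\bt^2}$. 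For the $\bt$-derivative one gets the $4$ from $\partial_\bt(4\bt)$ plus the arctan contribution; the quotient rule gives $\partial_\bt u = 2\sqrt2\mu \frac{\Delta - 2\bt^2}{\Delta^2}$, and collecting terms over the common denominator $\Delta^2+8\mu^2\bt^2$ should reorganize into $4\big(\frac{\Delta^2+2\mu^2\Delta+4\mu^2\bt^2}{\Delta^2+8\mu^2\bt^2}\big)$.

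The second task is to justify the identity $\partial_\al M[B_\mu] = \int_\R B_\mu \Lambda_\al B_\mu$ (and likewise for $\bt$). This is really a differentiation-under-the-integral-sign statement: since $M[B_\mu] = \frac12\int_\R B_\mu^2$, formally $\partial_\al M = \int_\R B_\mu\, \partial_\al B_\mu = \int_\R B_\mu \Lambda_\al B_\mu$, with the analogue for $\bt$. To make this rigorous I must first establish the assertion that $\Lambda_\al B_\mu$ and $\Lambda_\bt B_\mu$ belong to the Schwartz class in $x$, which both secures integrability of the products and legitimizes interchanging $\partial_\al$ (resp.\ $\partial_\bt$) with $\int_\R$ via dominated convergence. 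The Schwartz-class claim follows from the explicit form \eqref{GEBre}: $B_\mu$ is a smooth function of $(x,\al,\bt)$ given by derivatives of $\arctan(G_\mu/F_\mu)$, and the decay in $x$ is governed by the $\cosh(\bt y_2)$ terms in $F_\mu$ (which dominate the bounded trigonometric pieces), so $B_\mu$ decays exponentially and this decay is inherited, with locally uniform bounds in the parameters, by any finite number of $x$- and parameter-derivatives, provided we stay in the open region $\Delta>0$ where $F_\mu$ stays bounded away from zero.

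Concretely I would carry out the steps in this order: (i) record $\partial_\al\Delta$, $\partial_\bt\Delta$ and the shorthand $u$; (ii) establish the Schwartz regularity and locally-uniform exponential bounds on $B_\mu$ and its parameter-derivatives on compact subsets of the admissible parameter region, so that differentiation under the integral is valid; (iii) deduce $\partial_\al M = \int_\R B_\mu\Lambda_\al B_\mu$ and $\partial_\bt M = \int_\R B_\mu\Lambda_\bt B_\mu$; (iv) perform the explicit differentiation of \eqref{MplanBG} to obtain the two closed-form right-hand sides; and (v) read off the signs, noting $\al,\bt>0$, $\mu\in(0,\mu_{max})$ and $\Delta>0$ make the $\al$-derivative strictly negative and the $\bt$-derivative strictly positive (the numerator $\Delta^2+2\mu^2\Delta+4\mu^2\bt^2$ being a sum of positive terms). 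Time-independence is immediate once the final expressions are seen to depend only on $\al,\bt,\mu$ and not on $t$; equivalently, since mass is conserved by \eqref{M1}, $M[B_\mu]$ itself is independent of $t$ and so are its parameter-derivatives. The main obstacle is not the algebra of step (iv), which is routine, but step (ii): one must verify the uniform exponential decay of $B_\mu$ and all its relevant derivatives carefully enough to justify the interchange, since the explicit breather profile \eqref{GEBre} is cumbersome and its parameter-derivatives introduce polynomially growing prefactors (from differentiating $y_1,y_2$ in $t,x$) that must be dominated by the $\cosh(\bt y_2)$ decay.
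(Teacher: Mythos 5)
Your proposal is correct and follows essentially the same route as the paper: the paper's own proof simply notes that $\Lambda_\al B_\mu$ and $\Lambda_\bt B_\mu$ are Schwartz ``by simple inspection'' and then obtains \eqref{LAB1a}--\eqref{LAB2b} by differentiating the explicit mass formula \eqref{MassBG}, exactly your steps (iv)--(v), and your algebra checks out (both closed forms and both signs, using $\al,\bt>0$ and $\Delta>0$). The only refinements you add are the explicit justification of differentiation under the integral sign and the decay estimates, which the paper leaves implicit; note only that the relevant non-vanishing quantity is $F_\mu^2+G_\mu^2$ rather than $F_\mu$ itself, since $B_\mu=2\sqrt{2}\,(G_{\mu,x}F_\mu-G_\mu F_{\mu,x})/(F_\mu^2+G_\mu^2)$.
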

\noindent



\begin{proof}
By simple inspection, one can see that, given $t$ fixed, 
$\Lambda_\al B_{\mu} $ and $\Lambda_\bt B_{\mu} $ are well-defined Schwartz functions. 
The proof of (\ref{LAB1a}) and (\ref{LAB2b}) is consequence of (\ref{MassBG}).
\end{proof}

Consider now the two directions associated to spatial translations. Let $B_{\al,\beta,\mu}$ as introduced in \eqref{GEBre}. We define 

\be\label{B12}
 B_1(t ; x_1,x_2) := \partial_{x_1} B_{\al,\bt,\mu}(t ; x_1, x_2),\quad \hbox{ and } \quad  B_2(t ; x_1,x_2): =\partial_{x_2} B_{\al,\bt,\mu}(t ;  x_1, x_2).
\ee

\ms
\noindent
It is clear that, for all $t\in \R,$ and  $\al,\bt,\mu$ as in definition \ref{BreatherGE} and $x_1,x_2\in \R$, both $B_1$ and $B_2$ are real-valued functions in the  Schwartz class, 
exponentially decreasing in space. Moreover, it is not difficult to see that they are \emph{linearly independent} as functions of 
the $x$-variable, for all time $t$ fixed.

\begin{lem}\label{Id1} Let $B_\mu=B_{\al,\bt,\mu}$ be any Gardner breather of the form \eqref{GEBre}. Then we have
\ben
\item  $B_\mu =\tilde B_{\mu,x}$, with $\tilde B_\mu=\tilde B_{\al,\bt,\mu}$ given by the smooth $L^\infty$-function
\be\label{tBga}
\tilde B_\mu(t,x) := 2\sqrt{2}  \arctan \Big(\frac{G_\mu}{F_\mu}\Big). 
\ee
\item For any fixed $t\in \R$, we have $ (\tilde B_\mu)_t$ well-defined in the Schwartz class, satisfiying
\be\label{2ndga}
B_{\mu,xx} + \tilde B_{\mu,t} + 3\mu B_\mu^2 + B_\mu^3 = 0.
\ee
\item Let $\mathcal M_{\mu}$ be defined by (\ref{MasssBG1}). Then
\be\label{Firstga}
B_{\mu,x}^2  + \frac 12 B_\mu^4 + 2\mu B_\mu^3 + 2B_\mu \tilde B_{\mu,t} - 2(\mathcal M_{\mu})_t=0.
\ee
\item  Finally, let $B_1$ and $B_2$ as in \eqref{B12} and $\tilde B_{i}\equiv\partial_{x_i}\tilde B_\mu,~~~
\tilde B_{ij}\equiv\partial_{x_i}\partial_{x_j}\tilde B_\mu,~i,j=1,2$. Then
\be\label{Idwrons}
\int_{-\infty}^x(\tilde B_{12}^2 -\tilde B_{11}\tilde B_{22}) = -(\mu + B_\mu)\tilde{B}_{11} + \partial^2_{x_1}\partial_x\log\Big(G^2_\mu + F^2_\mu\Big).
\ee
\een
\end{lem}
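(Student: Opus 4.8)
The plan is to exploit the way the spatial variable and the two translation parameters enter the breather, which collapses the apparently cumbersome Hessian-type integrand into a quantity controlled by the explicit partial mass $\mathcal M_\mu$. First I would record that $G_\mu,F_\mu$, and hence $\tilde B_\mu$ from \eqref{tBga}, depend on $(t,x)$ only through $y_1=x+\delta t+x_1$ and $y_2=x+\gamma t+x_2$; since $\partial_x y_i=\partial_{x_i}y_i=1$ while $\partial_{x_1}y_2=\partial_{x_2}y_1=0$, one obtains the operator identity $\partial_x=\partial_{x_1}+\partial_{x_2}$ acting on $\tilde B_\mu$ and on any function of $(y_1,y_2)$, in particular on $L:=\log(G_\mu^2+F_\mu^2)$. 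Applying it to $\tilde B_\mu$ gives $B_\mu=\tilde B_1+\tilde B_2$, and one further differentiation in $x_1,x_2$ yields $B_1=\tilde B_{11}+\tilde B_{12}$, $B_2=\tilde B_{12}+\tilde B_{22}$ and $B_1+B_2=B_{\mu,x}$.

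With these relations the integrand becomes purely algebraic. Substituting $\tilde B_{22}=B_2-\tilde B_{12}$, then $B_2=B_{\mu,x}-B_1$ and $B_1=\tilde B_{11}+\tilde B_{12}$, I would reduce
\[
\tilde B_{12}^2-\tilde B_{11}\tilde B_{22}=B_1\tilde B_{12}-B_2\tilde B_{11}=B_1^2-B_{\mu,x}\tilde B_{11}.
\]
I then integrate. An integration by parts gives $-\int_{-\infty}^x B_{\mu,x}\tilde B_{11}=-B_\mu\tilde B_{11}+\int_{-\infty}^x B_\mu\,\partial_x\tilde B_{11}$, the boundary contribution at $-\infty$ vanishing because $B_\mu$ decays there. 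Since $\partial_x\tilde B_{11}=\partial_{x_1}B_1$, the surviving integrand combines with $B_1^2$ into the exact derivative $B_1^2+B_\mu\partial_{x_1}B_1=\partial_{x_1}(B_\mu B_1)$. Because $\int_{-\infty}^x B_\mu B_1=\partial_{x_1}\mathcal M_\mu$ (as $B_\mu B_1=\tfrac12\partial_{x_1}B_\mu^2$ and $\mathcal M_\mu=\tfrac12\int_{-\infty}^x B_\mu^2$), this integral equals $\partial_{x_1}^2\mathcal M_\mu$, so that $\int_{-\infty}^x(\tilde B_{12}^2-\tilde B_{11}\tilde B_{22})=-B_\mu\tilde B_{11}+\partial_{x_1}^2\mathcal M_\mu$.

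Finally I would insert the closed form of the partial mass from Definition~\ref{MasssBG1}, which reads $\mathcal M_\mu=2\bt+\partial_x L-\mu\tilde B_\mu$ once one uses $2\sqrt2\,\mu\arctan(G_\mu/F_\mu)=\mu\tilde B_\mu$ from \eqref{tBga}. Differentiating twice in $x_1$ gives $\partial_{x_1}^2\mathcal M_\mu=\partial_{x_1}^2\partial_x L-\mu\tilde B_{11}$, and collecting terms produces exactly $-(\mu+B_\mu)\tilde B_{11}+\partial_{x_1}^2\partial_x\log(G_\mu^2+F_\mu^2)$, which is \eqref{Idwrons}.

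\textbf{Main obstacle.} The two genuinely delicate points are spotting the operator identity $\partial_x=\partial_{x_1}+\partial_{x_2}$, which is precisely what makes the Hessian-type determinant collapse and lets one reuse the already computed $\mathcal M_\mu$, and verifying that $\tilde B_\mu$ approaches $x_1,x_2$-independent constants at $\pm\infty$ with exponential rate. The latter is what guarantees that the antiderivatives $\tilde B_{ij}$ are Schwartz (so the left-hand integral converges) and that every boundary term at $-\infty$ drops; once these are secured, the remainder is bookkeeping with the relations of the first paragraph.
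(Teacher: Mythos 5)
Your treatment of the fourth identity \eqref{Idwrons} is correct and is essentially the paper's own argument: the same structural relations $B_\mu=\tilde B_1+\tilde B_2$, $B_1=\tilde B_{11}+\tilde B_{12}$, $B_2=\tilde B_{12}+\tilde B_{22}$ coming from $\partial_x=\partial_{x_1}+\partial_{x_2}$, the same algebraic collapse to $B_1^2-B_{\mu,x}\tilde B_{11}$, the same integration by parts yielding $-B_\mu\tilde B_{11}+\partial_{x_1}^2\mathcal M_\mu$, and the same substitution of the closed form of $\mathcal M_\mu$ from Definition~\ref{MasssBG1}. Item (1) is also implicitly covered, since it is nothing but the definition \eqref{GEBre}.

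However, the lemma has four items, and your proposal never addresses items (2) and (3), i.e.\ the identities \eqref{2ndga} and \eqref{Firstga}. These are not ``bookkeeping'' consequences of the operator identity in your first paragraph: they are precisely the place where the fact that $B_\mu$ solves the Gardner equation \eqref{GE} enters the lemma (your argument for \eqref{Idwrons} uses the PDE only indirectly, through the closed formula for $\mathcal M_\mu$). To close the gap, argue as the paper does: since $B_\mu=\tilde B_{\mu,x}$, one may write $B_{\mu,t}=\partial_x\tilde B_{\mu,t}$ and integrate \eqref{GE} in space from $-\infty$ to $x$; the decay of $B_\mu$, its derivatives, and $\tilde B_{\mu,t}$ at $-\infty$ kills the constant of integration and gives
\[
B_{\mu,xx}+\tilde B_{\mu,t}+3\mu B_\mu^2+B_\mu^3=0 .
\]
Then multiply this identity by $B_{\mu,x}$ and integrate again from $-\infty$ to $x$, using the integration by parts
\[
\int_{-\infty}^x \tilde B_{\mu,t}B_{\mu,x}
= B_\mu\tilde B_{\mu,t}-\int_{-\infty}^x B_\mu B_{\mu,t}
= B_\mu\tilde B_{\mu,t}-(\mathcal M_\mu)_t ,
\]
which produces exactly \eqref{Firstga} after multiplying by $2$. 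Without these two items the statement is unproven as written, and they are not dispensable: \eqref{2ndga} and \eqref{Firstga} are used later in the paper (e.g.\ in Lemma \ref{ME2}, in Theorem \ref{GBGE} via the NVBC analogue, and \eqref{2ndga} is differentiated in $x_1,x_2$ in the Wronskian computation of Lemma \ref{WMLe}).
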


\begin{proof}
The first item above is a direct consequence of the definition of $B_\mu=B_{\al,\bt,\mu}$ in (\ref{GEBre}). 
On the other hand, (\ref{2ndga}) is a consequence of (\ref{tBga}) and integration in space (from $-\infty$ to $x$) of (\ref{GE}). 
To obtain (\ref{Firstga}) we multiply (\ref{2ndga}) by $B_{\mu,x}$ and integrate in space. Finally to prove \eqref{Idwrons}, 
since we are working with smooth functions, one has $B_\mu =\tilde B_1 + \tilde B_2$, and also
\[
B_1 = \tilde B_{11} + \tilde B_{12}, \qquad\text{and}\qquad B_2 = \tilde B_{12} + \tilde B_{22}.
\]
\noindent
Now, since $\tilde B_{12}^2=B_1^2 + \tilde B_{11}^2 - 2B_1\tilde B_{11}$ and $\tilde B_{22} = B_2 - B_1 + \tilde B_{11}$, we get

\begin{align}\label{inter2}
&(\tilde B_{12}^2 -\tilde B_{11}\tilde B_{22}) = B_1^2 + \tilde B_{11}^2 - 2B_1\tilde B_{11} -\tilde B_{11}(B_2 - B_1 + \tilde B_{11})\nonu \\
&=  B_1^2 - \tilde B_{11}( B_1 +  B_{2}) =  B_1^2 - \tilde B_{11}\partial_x(\tilde B_1 + \tilde B_2) = B_1^2 - \tilde B_{11}B_{\mu,x}.
\end{align}
\noindent
Now integrating \eqref{inter2} in $x$, we obtain:

\begin{align}
&\int_{-\infty}^x(\tilde B_{12}^2 -\tilde B_{11}\tilde B_{22}) =\int_{-\infty}^x(B_1^2 - \tilde B_{11}B_{\mu,x})\\
&= \int_{-\infty}^x B_1^2 -\Big[B_\mu\tilde B_{11}|_{-\infty}^x - \int_{-\infty}^x B_\mu B_{11}\Big].
\end{align}
\noindent
Using that $B_\mu B_{11} = \partial_{x_1}^2(\frac{1}{2}B_\mu^2) - B_1^2$, the last integral above simplifies as follows:

\begin{align}
& \int_{-\infty}^x B_1^2 -\Big[B_\mu\tilde B_{11}|_{-\infty}^x - \int_{-\infty}^x B_\mu B_{11}\Big]\\
&= \int_{-\infty}^x B_1^2 -\Big[B_\mu\tilde B_{11}|_{-\infty}^x - \int_{-\infty}^x (\partial_{x_1}^2(\frac{1}{2}B_\mu^2) - B_1^2)\Big]\\
&=-B_\mu\tilde B_{11}|_{-\infty}^x + \partial_{x_1}^2\int_{-\infty}^x \frac{1}{2}B_\mu^2 = -B_\mu\tilde B_{11} + \partial_{x_1}^2\mathcal{M}_\mu[t,x].
\end{align}

\noindent
Now, remembering \eqref{MasssBG1}, we get
\[
  \partial_{x_1}^2\mathcal{M}_\mu = \partial_{x_1}^2\partial_x \log\Big(G^2_\mu + F^2_\mu\Big) - \mu\tilde B_{11}.
\]

Finally, substituting this derivative in the last equation above, we obtain the desired simplification. The proof is complete.
\end{proof}

\begin{rem}
The reader may compare (\ref{2ndga})-(\ref{Firstga}) with the well known identities for the Gardner soliton solution \eqref{fexplicita2}:
\[
Q_{c,\mu}^{''} -cQ_{c,\mu} + 3\mu Q_{c,\mu}^2 + Q_{c,\mu}^3 =0, \qquad (Q_{c,\mu}^{'})^2 -cQ_{c,\mu}^2 + 2\mu Q_{c,\mu}^3 +\frac 12 Q_{c,\mu}^4 =0.
\]
\end{rem}

We compute now the energy \eqref{E2} of any Gardner breather solution.

\begin{lem}\label{ME2} Let $B_{\mu}=B_{\al,\bt,\mu}$ be any Gardner breather, for $\al, \bt,\mu$ as in definition \eqref{GEBre}. 
Then the energy of $B_{\mu}$ is

\be\label{EnergyB}
E_\mu[B_\mu] :=  \frac{4}{3}\bt\ga + 8\bt\mu^2 + 4\sqrt{2}\mu^3\arctan\Big[\frac{2\sqrt{2}\mu\bt}{\Delta}\Big].
\ee
\end{lem}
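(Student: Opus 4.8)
The plan is to start from the definition \eqref{E2} of the energy and to peel off the quadratic, cubic and quartic spatial integrals one at a time, using the pointwise identities of Lemma \ref{Id1} to lower the order of the integrand and Lemma \ref{Id_GEq} to convert quadratic expressions into total $x$-derivatives of the bilinear potential $\Phi:=\log(G_\mu^2+F_\mu^2)$ plus a term linear in $B_\mu$. First I would integrate the term $\tfrac12\int_\R B_{\mu,x}^2$ by parts and substitute $B_{\mu,xx}=-\tilde B_{\mu,t}-3\mu B_\mu^2-B_\mu^3$ from \eqref{2ndga}; since $B_\mu$ and $B_{\mu,x}$ are Schwartz the boundary terms vanish, and the energy reduces to
\[
E_\mu[B_\mu]=\tfrac12\int_\R B_\mu\,\tilde B_{\mu,t}+\tfrac{\mu}{2}\int_\R B_\mu^3+\tfrac14\int_\R B_\mu^4 .
\]
This isolates exactly the three integrals that must be evaluated in closed form.

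Next I would handle the nonlinear integrals. Rewriting the identity \eqref{Id_B2} as $\Phi_{xx}=\tfrac12 B_\mu^2+\mu B_\mu$, every even power of $B_\mu$ can be expressed through $\Phi$ and its derivatives, so that after repeated integration by parts the cubic and quartic terms reduce to total derivatives of $\Phi$ together with lower-order pieces already appearing in the mass computation. The purely linear contribution $\int_\R B_\mu=[\tilde B_\mu]_{-\infty}^{+\infty}$, with $\tilde B_\mu$ as in \eqref{tBga}, reproduces the $\arctan$ factor exactly as in \eqref{MassBG} and is the source of the $4\sqrt2\mu^3\arctan[2\sqrt2\mu\beta/\Delta]$ summand. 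The remaining total-derivative and flux contributions are then evaluated at $x=\pm\infty$ using the explicit asymptotics of $F_\mu,G_\mu$: since $\Phi\sim 2\beta\,\lvert x+\gamma t+x_2\rvert+O(1)$, the boundary values of $\Phi_t$ and of the associated mass flux produce precisely the polynomial terms $\tfrac43\beta\gamma+8\beta\mu^2$, where one uses $(\mathcal M_\mu)_t=\Phi_{xt}-\mu\tilde B_{\mu,t}$ read off from \eqref{MasssBG1} together with \eqref{Firstga}.

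Finally I would check that every explicitly $t$-dependent contribution cancels, which is forced by conservation of the energy \eqref{E2}, leaving the stated time-independent expression \eqref{EnergyB}. The main obstacle is the quartic integral $\int_\R B_\mu^4$: unlike $\int_\R B_\mu^2$ it is not captured by the single bilinear identity \eqref{Id_B2}, so it must be traded against $\int_\R B_{\mu,x}^2$ and the time-derivative fluxes via \eqref{Firstga} and then reduced to a primitive integrable in closed form, in the same spirit as the soliton computation \eqref{EQc} based on the tabulated integrals of \cite{GradRyz}. Care is also required because the potential $\Phi$ does not decay: the boundary terms at $\pm\infty$ are genuinely nonzero and have to be computed to the correct order, and the oscillatory factors $\sin(\alpha y_1),\cos(\alpha y_1)$ entering $G_\mu,F_\mu$ must be shown to combine into the stated closed form rather than leaving extra time-dependent remainders.
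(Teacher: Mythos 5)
Your proposal is correct and follows essentially the same route as the paper's proof: the paper likewise combines \eqref{2ndga} (multiplied by $B_\mu$ and integrated) with the integrated form of \eqref{Firstga} to obtain the reduction $E_\mu[B_\mu]=\frac13\int_\R\big((\mathcal M_\mu)_t-\mu B_\mu^3\big)$, then substitutes $(\mathcal M_\mu)_t=\partial_x\partial_t\log(G_\mu^2+F_\mu^2)-\mu\tilde B_{\mu,t}$ from \eqref{MasssBG1}, cancels the cubic terms via \eqref{2ndga}, and evaluates the boundary fluxes together with $2\mu^2 M[B_\mu]$ from \eqref{MassBG}. Your intermediate identity $E_\mu[B_\mu]=\frac12\int_\R B_\mu\tilde B_{\mu,t}+\frac{\mu}{2}\int_\R B_\mu^3+\frac14\int_\R B_\mu^4$ is algebraically equivalent to the paper's \eqref{red2}, so the two arguments differ only in bookkeeping order.
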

\ms
\noindent
\begin{rem}
Note that in the Gardner case, in comparisson with the mKdV case ($\mu=0$),
the sign of the energy is dictated by a nonlinear balance among the velocity $\ga$ and the $\mu$ terms. 
\end{rem}

\begin{proof}
First of all, let us prove the following reduction
\be\label{red2}
E_\mu[B_\mu] (t)= \frac 13\int_\R \Big((\mathcal M_{\mu})_t(t,x)-\mu B_\mu^3(t,x)\Big)dx.
\ee
Indeed,  we multiply (\ref{2ndga}) by $B_\mu$ and integrate in space: we get
\[
\int_\R B_{\mu,x}^2  = \int_\R B_\mu \tilde B_{\mu,t} + 3\mu\int_\R B_\mu^3 + \int_\R B_\mu^4.
\]
On the other hand, integrating (\ref{Firstga}),
\[
\int_\R B_{\mu,x}^2  + \frac 12 \int_\R B_\mu^4 + 2\mu \int_\R B_\mu^3 + 2\int_\R B_\mu \tilde B_{\mu,t} - 2 \int_\R (\mathcal M_{\mu})_t=0.
\]
From these two identities, we get
\[
 \int_\R B_\mu^4 = \frac 43 \int_\R (\mathcal M_{\mu})_t -2\int_\R B_\mu \tilde B_{\mu,t} - \frac{10}{3}\mu \int_\R B_\mu^3,
\]
and therefore
\[
\int_\R B_{\mu,x}^2  = \frac 43 \int_\R (\mathcal M_{\mu})_t -  \int_\R B_\mu \tilde B_{\mu,t} - \frac{\mu}{3}\int_\R B_\mu^3.
\]
Finally, substituting the last two identities into (\ref{E2}), we get (\ref{red2}), as desired.

\medskip

Now we prove (\ref{EnergyB}).  From (\ref{MasssBG1}), we have that
\[
 \mathcal M_{\mu}(t,x)= 2\bt + \partial_x\log(G^2+F^2) -\mu \tilde B_{\mu},
\]
\noindent
and hence,
\[
 (\mathcal M_{\mu})_t(t,x)=  \partial_x\partial_t\log(G^2+F^2) -\mu \tilde B_{\mu,t}.
\]
Now substituting in the energy \eqref{red2}, remembering the identity \eqref{2ndga} and the explicit expression for
 $\mathcal M_\mu[B_\mu]$ in \eqref{MassBG}, we get

\begin{align*}
   E_\mu[B_\mu] (t)&= \frac 13\int_\R \Big((\mathcal M_{\mu})_t(t,x)-\mu B_\mu^3\Big)dx = 
\frac 13\int_\R \Big(\partial_x\partial_t\log(G_\mu^2+F_\mu^2) -\mu \tilde B_{\mu,t}-\mu B_\mu^3\Big)dx\\
&   = \frac 13\int_\R \Big(\partial_x\partial_t\log(G_\mu^2+F_\mu^2) +\mu[B_{\mu,xx}  + 3\mu B_\mu^2 + B_\mu^3]-\mu B_\mu^3\Big)dx\\
&   = \frac 13\int_\R \Big(\partial_x\partial_t\log(G_\mu^2+F_\mu^2) +\mu B_{\mu,xx}  + 3\mu^2 B_\mu^2 \Big)dx\\
&    = \Big(\frac 13\partial_t\log(G_\mu^2+F_\mu^2) +\frac{\mu}{3} B_{\mu,x}\Big)|_{-\infty}^{+\infty}  + \mu^2\int_\R B_\mu^2dx\\
&    = \Big(\frac 13\partial_t\log(G_\mu^2+F_\mu^2) +\frac{\mu}{3} B_{\mu,x}\Big)|_{-\infty}^{+\infty}  + 2\mu^2 M[B_\mu]\\
&    = \Big(\frac 13\partial_t\log(G_\mu^2+F_\mu^2) +\frac{\mu}{3} B_{\mu,x}\Big)|_{-\infty}^{+\infty}  
+ 2\mu^2 (4\bt + 4\sqrt{2}\mu\arctan\Big[\frac{\sqrt{2}\mu\bt}{\Delta}\Big])\\
& = \frac{4}{3}\bt\ga + 8\bt\mu^2 + 4\sqrt{2}\mu^3\arctan\Big[\frac{2\sqrt{2}\mu\bt}{\Delta}\Big].
\end{align*}
\end{proof}


\begin{cor}\label{WCcor2} Let $B_\mu=B_{\al,\bt,\mu}$ be any Gardner breather. Then
\be\label{LAB3}
\partial_\al E_\mu[B_\mu] =8\al\bt(1-\frac{4\mu^4}{\Delta^2+8\mu^2\bt^2}), 
\quad \partial_\bt E_\mu[B_\mu] =4(\al^2-\bt^2) + 8\mu^2\Big(\frac{\Delta^2 +2\mu^2\Delta + 4\mu^2\bt^2}{\Delta^2+8\mu^2\bt^2}\Big).
\ee
\end{cor}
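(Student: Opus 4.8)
The plan is to obtain the two partial derivatives by differentiating the closed-form energy expression \eqref{EnergyB} with respect to $\al$ and $\bt$ directly, treating $\mu$ as a fixed parameter and remembering the dependencies $\ga = 3\al^2-\bt^2$ and $\Delta = \al^2+\bt^2-2\mu^2$. This is purely a computation: since Lemma \ref{ME2} already supplies
\be\label{Eagain}
E_\mu[B_\mu] = \tfrac{4}{3}\bt\ga + 8\bt\mu^2 + 4\sqrt{2}\mu^3\arctan\Big[\tfrac{2\sqrt{2}\mu\bt}{\Delta}\Big],
\ee
no new structural input is needed; one simply applies the chain rule. The only subtlety is bookkeeping of the arctangent term, whose argument $2\sqrt 2\mu\bt/\Delta$ depends on both scalings through $\Delta$.

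First I would compute $\partial_\al E_\mu[B_\mu]$. The first summand $\tfrac43\bt\ga = \tfrac43\bt(3\al^2-\bt^2)$ contributes $8\al\bt$; the term $8\bt\mu^2$ is $\al$-independent and drops out. For the arctangent, I use $\tfrac{d}{dz}\arctan z = 1/(1+z^2)$ together with $\partial_\al(2\sqrt2\mu\bt/\Delta) = -2\sqrt2\mu\bt\,\Delta^{-2}\,\partial_\al\Delta$ and $\partial_\al\Delta = 2\al$. Collecting factors and using $1+(2\sqrt2\mu\bt/\Delta)^2 = (\Delta^2+8\mu^2\bt^2)/\Delta^2$ to clear denominators, the arctangent contribution consolidates into a single rational expression in $\Delta^2+8\mu^2\bt^2$, yielding after simplification the stated $-32\al\bt\mu^4/(\Delta^2+8\mu^2\bt^2)$, so that $\partial_\al E_\mu[B_\mu] = 8\al\bt\bigl(1 - 4\mu^4/(\Delta^2+8\mu^2\bt^2)\bigr)$.

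Next I would compute $\partial_\bt E_\mu[B_\mu]$ along the same lines. Here $\tfrac43\bt\ga = \tfrac43\bt(3\al^2-\bt^2)$ differentiates to $4\al^2 - 4\bt^2$, the term $8\bt\mu^2$ contributes $8\mu^2$, and the arctangent now requires $\partial_\bt(2\sqrt2\mu\bt/\Delta)$, which by the quotient rule equals $2\sqrt2\mu(\Delta - \bt\,\partial_\bt\Delta)/\Delta^2$ with $\partial_\bt\Delta = 2\bt$, i.e. $2\sqrt2\mu(\Delta-2\bt^2)/\Delta^2$. Multiplying by $\Delta^2/(\Delta^2+8\mu^2\bt^2)$ from the arctangent derivative and by the prefactor $4\sqrt2\mu^3$, and simplifying using $\mu(\Delta - 2\bt^2)\cdot\mu^2 = \mu^3(\Delta-2\bt^2)$, the whole arctangent contribution becomes $8\mu^2(\Delta-2\bt^2)/(\Delta^2+8\mu^2\bt^2)$. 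Combining this with the $8\mu^2$ from the second summand and writing $8\mu^2 + 8\mu^2(\Delta-2\bt^2)/(\Delta^2+8\mu^2\bt^2)$ over the common denominator gives $8\mu^2(\Delta^2+8\mu^2\bt^2 + \Delta - 2\bt^2)/(\Delta^2+8\mu^2\bt^2)$, which one reconciles with the claimed factor $8\mu^2(\Delta^2+2\mu^2\Delta+4\mu^2\bt^2)/(\Delta^2+8\mu^2\bt^2)$ after expanding and using $\Delta = \al^2+\bt^2-2\mu^2$; adding the polynomial part $4(\al^2-\bt^2)$ then yields the second formula in \eqref{LAB3}.

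The computation itself involves no conceptual obstacle, so the real difficulty is entirely one of algebraic simplification: the intermediate expressions for both derivatives are bulky, and matching them to the compact final forms requires careful and repeated substitution of $\Delta = \al^2+\bt^2-2\mu^2$ to collapse mixed polynomial-plus-rational terms into the advertised quotients with denominator $\Delta^2+8\mu^2\bt^2$. I expect this reconciliation, particularly verifying that the numerator of $\partial_\bt E_\mu[B_\mu]$ reduces to $\Delta^2 + 2\mu^2\Delta + 4\mu^2\bt^2$, to be the main place where errors could creep in and therefore the step warranting the most careful checking.
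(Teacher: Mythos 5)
Your overall approach is exactly the paper's own (implicit) proof: Corollary \ref{WCcor2} carries no separate argument in the paper and, just as Corollary \ref{WCcor} follows by differentiating \eqref{MassBG}, it is obtained by differentiating the closed form \eqref{EnergyB} in $\al$ and $\bt$, so there is no methodological divergence. Your computation of $\partial_\al E_\mu[B_\mu]$ is correct. However, your justification of $\partial_\bt E_\mu[B_\mu]$ contains a genuine error: the arctangent contribution is not $8\mu^2(\Delta-2\bt^2)/(\Delta^2+8\mu^2\bt^2)$. Since
\[
\partial_\bt \arctan\Big(\frac{2\sqrt2\mu\bt}{\Delta}\Big)=\frac{\Delta^2}{\Delta^2+8\mu^2\bt^2}\cdot\frac{2\sqrt2\mu(\Delta-2\bt^2)}{\Delta^2}
=\frac{2\sqrt2\mu(\Delta-2\bt^2)}{\Delta^2+8\mu^2\bt^2},
\]
multiplying by the prefactor $4\sqrt2\mu^3$ gives $16\mu^4(\Delta-2\bt^2)/(\Delta^2+8\mu^2\bt^2)$: the numerical factor is $4\sqrt2\cdot2\sqrt2=16$ and the power of $\mu$ is four, not two. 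Because of this slip, the ``reconciliation'' you then describe is false as stated: the identity $\Delta^2+8\mu^2\bt^2+\Delta-2\bt^2=\Delta^2+2\mu^2\Delta+4\mu^2\bt^2$ does not hold under the substitution $\Delta=\al^2+\bt^2-2\mu^2$ (the left side adds $\Delta$ linearly to $\Delta^2$, so it is not even homogeneous; at $\al=\bt=1$ the two sides differ by $2\mu^2-4\mu^4\neq0$). So the chain of equalities you wrote does not establish the second formula in \eqref{LAB3}.

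The repair is immediate once the coefficient is corrected. With the arctangent contribution $16\mu^4(\Delta-2\bt^2)/(\Delta^2+8\mu^2\bt^2)$ one gets
\[
8\mu^2+\frac{16\mu^4(\Delta-2\bt^2)}{\Delta^2+8\mu^2\bt^2}
=\frac{8\mu^2\big(\Delta^2+8\mu^2\bt^2+2\mu^2\Delta-4\mu^2\bt^2\big)}{\Delta^2+8\mu^2\bt^2}
=\frac{8\mu^2\big(\Delta^2+2\mu^2\Delta+4\mu^2\bt^2\big)}{\Delta^2+8\mu^2\bt^2},
\]
which, added to the polynomial part $4(\al^2-\bt^2)$, is exactly the stated expression; note that, contrary to what your write-up anticipates, no expansion of $\Delta$ in terms of $\al,\bt,\mu$ is needed anywhere in this step.
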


\begin{rem}
Note that the condition $\al=\bt$ is equivalent to the identity 
\[
 \partial_\bt E_\mu[B_\mu] =\frac{8\mu^2\bt^4}{\bt^4+\mu^4},
\] 
\noindent
which is always positive, on the contrary to the mKdV breather case ($\mu=0$). On the other hand the similar identity \eqref{WC}
for the energy of Gardner solitons  can not vanish for any $\mu$.
\end{rem}

\bigskip

\section{Elliptic equations for breathers}\label{3}

\medskip

\subsection{Nonlinear stationary equation for mKdV breathers with NVBC $\mu$}
The objective of this section is to prove that any mKdV breather solution with NVBC $\mu$ satisfies a suitable stationary, elliptic equation.
Indeed, this elliptic equation will be a key step in the proof of an equivalent stationary elliptic equation  for any Gardner breather, which
it is a direct consequence of the close connection between mKdV and Gardner solutions, as showed in Proposition \ref{connect}. 
First and for the sake of completness, 
we present these kind of mKdV breathers with NVBC\footnote{See \cite{Ale0} for further details.}:


\begin{defn}\label{BmksdNVBC} Let $\al, \bt,\mu \in \R\backslash\{0\}$ such that $\Delta=\al^2+\bt^2-2\mu^2>0$, 
and $x_1,x_2\in \R$. The real-valued breather solution of the mKdV equation \eqref{mKdV} with NVBC $\mu$ at $\pm\infty$
is given explicitly by the formula
\be\label{Bmkdv}B\equiv B^{nv}_{\al, \bt,\mu}(t,x;x_1,x_2)   
 := \mu +  2\sqrt{2}\partial_x\Bigg[\arctan\Big(\frac{g(t,x)}{f(t,x)}\Big)\Bigg],
\ee
with 

\[
\begin{aligned}
&g(t,x):= G_\mu(t,x-3\mu^2t),\\
&f(t,x):= F_\mu(t,x-3\mu^2t),
\end{aligned}
\]
\noindent
and $G_\mu,F_\mu$ defined in \eqref{GEBre}.
\end{defn}

\begin{rem}
Note that every mKdV breather $B$ (with or without NVBC) satisfies Definition \ref{DEF_B} with the same selection for $L$ and $T$ parameters that
Gardner breathers $B_\mu$ \eqref{GEBre}.
\end{rem}

\ms
\noindent
Let $B$ any  mKdV breather with NVBC. When we compute its mass \eqref{Mnv} (see appendix \ref{Id_BilinearGE} for 
a complementary proof of this identity), we obtain
\be\label{massBnvbc}
M_{nv}[B]:=\frac{1}{2}\int_\R(B^2 - \mu^2)dx = \partial_x\log(f^2 + g^2)|_{-\infty}^{+\infty} = 4\bt.
\ee
\noindent
We can also define the \emph{partial} mass associated to a mKdV breather with NVBC in the following way

\be\label{partialmassBnvbc}
\mathcal M_{nv}[B]:=\frac{1}{2}\int_{-\infty}^x(B^2 - \mu^2)dx = 2\bt + \partial_x\log(f^2 + g^2)(t,x).
\ee
\noindent
Note that the mass of any mKdV breather with NVBC \eqref{massBnvbc} is indeed equal to the mass of the well known mKdV breather solution (see \cite{AM}). 
Now, we show the following identities for mKdV breathers with NVBC:

\begin{lem}\label{Id1mkdv} Let $B=B^{nv}_{\al,\bt,\mu}$ be any mKdV breather with NVBC  of the form \eqref{BmksdNVBC}. Then we have
\ben
\item  $B =\tilde B_{x}$, with $\tilde B=\tilde B^{nv}_{\al,\bt,\mu}$ given by 
\be\label{tBmkdv}
\tilde B(t,x) := \mu x + 2\sqrt{2}  \arctan \Big(\frac{g}{f}\Big). 
\ee
\item For any fixed $t\in \R$, we have $ (\tilde B)_t$ well-defined in the Schwartz class, satisfiying
\be\label{2ndmkdv}
B_{xx} + \tilde B_{t} + B^3 -\mu^3= 0.
\ee
\item Finally, let $\mathcal M_{nv}$ be defined by (\ref{partialmassBnvbc}). Then
\be\label{Firstmkdv}
B_{x}^2  + \frac 12 B^4 + 2B \tilde B_{t} - 2(\mathcal M_{nv})_t - 2\mu^3B + \frac{3}{2}\mu^4=0.
\ee
\een
\end{lem}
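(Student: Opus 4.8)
The plan is to establish the three items of Lemma~\ref{Id1mkdv} in order, exploiting the fact that every mKdV breather with NVBC is (up to the Galilean-type shift of Proposition~\ref{connect}) obtained from a Gardner breather, for which the corresponding identities of Lemma~\ref{Id1} are already in hand. Item~(1) is immediate from Definition~\ref{BmksdNVBC}: the function $\tilde B$ in \eqref{tBmkdv} is defined precisely so that differentiating the $\arctan$ term reproduces the $2\sqrt2\,\partial_x[\arctan(g/f)]$ piece of \eqref{Bmkdv}, while $\partial_x(\mu x)=\mu$ recovers the additive NVBC constant; hence $\tilde B_x=B$ with $\tilde B$ smooth and $L^\infty$ away from the linear growth. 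For item~(2), I would integrate the mKdV equation \eqref{mKdV} in space from $-\infty$ to $x$. Writing $B=\tilde B_x$ and integrating $u_t+(u_{xx}+u^3)_x=0$ gives $\tilde B_t + B_{xx} + B^3 = \mathrm{const}(t)$; evaluating the constant at $x\to+\infty$, where $B\to\mu$ and the derivative terms vanish, fixes it as $\mu^3$, yielding \eqref{2ndmkdv}. The care needed here is in justifying that $\tilde B_t$ is genuinely Schwartz despite the linear-in-$x$ term in $\tilde B$: the $\mu x$ contribution is $t$-independent, so $\tilde B_t$ only sees the $\arctan$ part, which decays.

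For item~(3), the natural route mirrors the Gardner computation \eqref{Firstga}: multiply the second-order identity \eqref{2ndmkdv} by $B_x$ and integrate in $x$ from $-\infty$. The term $B_{xx}B_x=\tfrac12\partial_x(B_x^2)$ integrates to $\tfrac12 B_x^2$, the term $B^3 B_x=\tfrac14\partial_x(B^4)$ integrates to $\tfrac14 B^4$ up to boundary contributions, the term $-\mu^3 B_x = -\mu^3\partial_x B$ integrates to $-\mu^3 B$ plus a boundary term, and $\tilde B_t B_x$ is handled by noting $B_x\tilde B_t = \partial_x(B\tilde B_t) - B\tilde B_{tx}$ together with the partial-mass relation: differentiating \eqref{partialmassBnvbc} in $t$ shows $(\mathcal M_{nv})_t = \partial_x\partial_t\log(f^2+g^2)$, which ties the $\tilde B_t$ contribution to $(\mathcal M_{nv})_t$. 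Collecting everything and carefully accounting for the boundary values at $x\to-\infty$ (where now $B\to-\mu$, not $0$, because of the NVBC) produces the constant $\tfrac32\mu^4$ and the correct coefficients in \eqref{Firstmkdv}.

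The main obstacle I anticipate is the bookkeeping of boundary terms at $\pm\infty$. Unlike the Gardner case, where the breather is genuinely Schwartz and all boundary contributions vanish, here $B\to\mp\mu$ at the two ends, so the integrations by parts in item~(3) generate nonzero constants that must be tracked with the correct signs; the appearance of both the $-2\mu^3 B$ linear term and the $+\tfrac32\mu^4$ constant in \eqref{Firstmkdv} is exactly the residue of this boundary algebra. A secondary subtlety is ensuring that $\int_{-\infty}^x$ of the quantities $B^3 B_x$ and $\mu^3 B_x$ converges; since $B-(\mp\mu)$ decays exponentially one should renormalize by subtracting the asymptotic constants (equivalently work with $B^4-\mu^4$ and $B-(-\mu)$) so that all primitives are well defined, which is precisely why the identity is stated in terms of the NVBC-adapted partial mass $\mathcal M_{nv}$ rather than the naive one.
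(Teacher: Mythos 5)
Your strategy coincides with the paper's: item (1) from the definition, item (2) by integrating the mKdV equation in space using $B=\tilde B_x$ and fixing the constant from the boundary value, item (3) by multiplying \eqref{2ndmkdv} by $B_x$, integrating from $-\infty$ to $x$, and handling the $\tilde B_t B_x$ term through integration by parts and the partial mass $\mathcal M_{nv}$. Items (1) and (2) are fine. However, there is a concrete error in the step you yourself flag as the crux: the boundary values. The mKdV breather with NVBC satisfies $B\to+\mu$ at \emph{both} ends $x\to\pm\infty$, not $B\to-\mu$ at $-\infty$ and $+\mu$ at $+\infty$ as you assert. This is immediate from Definition \ref{BmksdNVBC}: $B=\mu+2\sqrt{2}\,\partial_x\arctan(g/f)$, and the second term is the $x$-derivative of a function approaching constant limits exponentially fast at both ends, hence it decays at both ends; equivalently, Proposition \ref{connect} says that $B-\mu$ (after a Galilean shift) is the Gardner breather, which is spatially localized. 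The breather sits on the constant background $\mu$; it is not a kink.

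This error is not cosmetic, because the constant in \eqref{Firstmkdv} is exactly the residue of this boundary algebra. Carrying out your own computation with $b_-:=\lim_{x\to-\infty}B$: the term $\int_{-\infty}^x B^3B_x=\tfrac14(B^4-\mu^4)$ is insensitive to the sign of $b_-$, but $-\mu^3\int_{-\infty}^x B_x=-\mu^3(B-b_-)$ is not. With the correct $b_-=+\mu$ the constants combine as $2\big(-\tfrac14\mu^4+\mu^4\big)=\tfrac32\mu^4$, which is \eqref{Firstmkdv}; with your claimed $b_-=-\mu$ they combine as $2\big(-\tfrac14\mu^4-\mu^4\big)=-\tfrac52\mu^4$, and the stated identity would fail. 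So as written your proof of item (3) does not close; the fix is simply to use the correct asymptotics $B\to\mu$ at both ends, after which your scheme reproduces the paper's proof. (A minor further remark: no renormalization of $\int_{-\infty}^x B^3 B_x$ or $\int_{-\infty}^x \mu^3 B_x$ is needed, since $B_x$ is Schwartz and both integrands are exact derivatives; the $\mu^2$-subtraction is only required in the definition \eqref{partialmassBnvbc} of $\mathcal M_{nv}$ itself.)
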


\begin{proof}
The first item above is a direct consequence of the definition of $B$ in (\ref{BmksdNVBC}). 
On the other hand, (\ref{2ndmkdv}) is a consequence of (\ref{tBmkdv}) and integration in space (from $-\infty$ to $x$) of (\ref{BmksdNVBC}). 
Finally, to obtain (\ref{Firstmkdv}) we multiply (\ref{2ndmkdv}) by $B_{x}$ and integrate in space taking into account the NVBC $\mu$ at $\pm\infty$. 
\end{proof}

The next nontrivial identity for  mKdV breathers with NVBC \eqref{BmksdNVBC} will be useful in the proof of the nonlinear stationary equation that they satisfy.

\begin{lem}\label{Iddificil}
Let $B=B^{nv}_{\al,\bt,\mu}$ be any mKdV breather with NVBC  \eqref{BmksdNVBC}. Then, for all $t\in \R$, 
\be\label{ide1nvbc}
 B_{xt} +  2(\mathcal M_{nv})_t B   = \Big(2(\bt^2 -\al^2)+5\mu^2\Big)\tilde B_t  + 
 \Big(\al^2 +\bt^2)^2 +6\mu^2(\bt^2-\al^2 + \frac{3}{2}\mu^2)\Big)(B-\mu).
\ee
\end{lem}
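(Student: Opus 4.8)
The plan is to reduce the desired identity \eqref{ide1nvbc} to a statement that can be verified directly from the already-established first-order identities of Lemma \ref{Id1mkdv}, rather than expanding the full four-parametric formula \eqref{Bmkdv} from scratch. The key observation is that the left-hand side involves $B_{xt}$ and the product $(\mathcal M_{nv})_t\, B$, while the right-hand side is an affine combination of $\tilde B_t$ and $(B-\mu)$; since $B=\tilde B_x$ by \eqref{tBmkdv}, every term is ultimately a function of $\tilde B$ and its derivatives. My first step is therefore to differentiate \eqref{2ndmkdv} in $x$ to express $B_{xt}=-B_{xxx}-(B^3)_x=-B_{xxx}-3B^2 B_x$, thereby eliminating the $t$-derivative of $\tilde B$ in favour of purely spatial quantities. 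Simultaneously I would use \eqref{Firstmkdv}, differentiated in $x$, to obtain an expression for $(\mathcal M_{nv})_{xt}$, and recall from \eqref{partialmassBnvbc} that $(\mathcal M_{nv})_x=\tfrac12(B^2-\mu^2)$, so that $(\mathcal M_{nv})_t$ and its relation to $B$ become explicit.

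The second step is to recognize that what must really be proved is a \emph{fourth-order ODE} for the breather profile: substituting the relations above, the claimed identity collapses to an equation of the form $B_{xxx}+(\text{lower order polynomial in }B, B_x, B_{xx})=0$, which is precisely the $x$-derivative of the stationary elliptic equation that the next section \eqref{GBnvbc} asserts for mKdV breathers with NVBC. In other words, I expect \eqref{ide1nvbc} to be algebraically equivalent, after using \eqref{2ndmkdv}--\eqref{Firstmkdv}, to the fourth-order ODE \eqref{BG4thODE} transcribed to the mKdV-NVBC setting via Proposition \ref{connect} (the shift $w=u-\mu$, $x\mapsto x+3\mu^2 t$). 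So the cleanest route is to take that fourth-order ODE as the target, differentiate the second-order relation \eqref{2ndmkdv} twice, combine with the first-order relation \eqref{Firstmkdv}, and check that the coefficients $2(\bt^2-\al^2)+5\mu^2$ and $(\al^2+\bt^2)^2+6\mu^2(\bt^2-\al^2+\tfrac32\mu^2)$ emerge correctly.

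The main obstacle, I expect, is bookkeeping the constant coefficients. The combination $(\al^2+\bt^2)^2+6\mu^2(\bt^2-\al^2)+9\mu^4$ does not factor transparently, and its appearance must be traced to the interplay between $\delta=\al^2-3\bt^2$, $\ga=3\al^2-\bt^2$ (appearing in the time-dependence of $\tilde B_t$) and the NVBC $\mu$. The safest way to pin these down, without expanding the transcendental formula \eqref{Bmkdv} directly, is to evaluate both sides at the limit $x\to+\infty$ (where $B\to\mu$, $B_x,B_{xx}\to 0$, and the $\arctan$ stabilizes) and at a second independent configuration, using the asymptotics of $g,f$; alternatively one exploits that both sides satisfy the same linear relation in $\tilde B_t$ and $B-\mu$, reducing the verification to matching two scalar coefficients. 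I would first confirm that the structural identity holds with undetermined coefficients $A,C$ in place of the two bracketed constants, and only at the end fix $A,C$ by matching the leading exponential and oscillatory modes $e^{\pm\bt y_2}$ and $\cos(\al y_1),\sin(\al y_1)$ of $\tilde B_t$ and $B$. This coefficient-matching is where the genuine computation lives; everything preceding it is formal manipulation of \eqref{2ndmkdv} and \eqref{Firstmkdv}.
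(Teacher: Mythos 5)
Your reduction is algebraically sound: differentiating \eqref{2ndmkdv} in $x$ to write $B_{xt}=-B_{4x}-6BB_x^2-3B^2B_{xx}$, using \eqref{Firstmkdv} to eliminate $(\mathcal M_{nv})_t$, and using \eqref{2ndmkdv} once more to convert $\tilde B_t$ into $-B_{xx}-B^3+\mu^3$, one checks that \eqref{ide1nvbc} is exactly equivalent, constants included, to the fourth-order ODE \eqref{EcBnvbc}. This is the same algebra that appears in the proof of Proposition \ref{GBnvbc}.

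The problem is the direction of the implication. In the paper, Proposition \ref{GBnvbc} is \emph{proved from} Lemma \ref{Iddificil} (its last line invokes \eqref{ide1nvbc}), and the Gardner equation \eqref{EcBGEfinal} of Theorem \ref{GBGE} is in turn deduced from \eqref{EcBnvbc}; so taking the fourth-order ODE (in either its Gardner or mKdV-NVBC form) ``as the target'' is circular unless you supply an independent proof of it, which your proposal never does. Your fallback does not close this gap either: the assertion that $B_{xt}+2(\mathcal M_{nv})_t B$ is a \emph{constant-coefficient} linear combination of $\tilde B_t$ and $B-\mu$ is precisely the nontrivial content of the lemma, and it does not follow formally from \eqref{2ndmkdv}--\eqref{Firstmkdv}, which are first- and second-order consequences of the flow, not stationary relations. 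Evaluating both sides as $x\to\pm\infty$ (where every term decays) or matching a few leading modes can at best determine the constants \emph{after} the structural claim is known; it cannot establish a pointwise identity between transcendental functions. This is why the paper's actual proof (Appendix \ref{apIddificil}) is an explicit computation: it expands $B_{xt}$, $(\mathcal M_{nv})_t B$, $\tilde B_t$ and $B-\mu$ in the derivatives $f_1,\dots,f_5$, $g_1,\dots,g_5$ of the explicit profile and verifies, with symbolic software, that the combined numerator $N_1+N_2$ factors as $f^2D^2\,2\sqrt{2}\,\bigl[A_1(g_2f-gf_2)+A_2(g_1f-gf_1)\bigr]$. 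To make your route rigorous you would need an independent verification of \eqref{EcBnvbc} (direct substitution of the explicit breather, or a genuine integrability argument), which is a computation of the same order as the one you are trying to avoid.
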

\begin{proof} 
See appendix \ref{apIddificil} for a detailed proof of this nonlinear identity.

\end{proof}

\begin{prop}\label{GBnvbc} Let $B= B^{nv}_{\al,\bt,\mu}$ be any mKdV breather with NVBC \eqref{BmksdNVBC}. 
Then, for any  fixed $t\in \R$, $B$ satisfies the nonlinear stationary equation  
\bea\label{EcBnvbc}
&J[B]:=& B_{(4x)} -\Big(2(\bt^2 -\al^2)+5\mu^2\Big) (B_{xx} + B^3)  +\Big((\al^2 +\bt^2)^2 +6\mu^2(\bt^2-\al^2 + \frac{5}{4}\mu^2) \Big)B \nonu\\
&& + 5 BB_x^2 + 5B^2 B_{xx} + \frac 32 B^5 - 4(\bt^2 -\al^2+\mu^2)\mu^3 - (\al^2 +\bt^2)^2\mu=0.
\eea
\end{prop}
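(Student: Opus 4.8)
The plan is to derive the fourth-order equation \eqref{EcBnvbc} by differentiating the key identity \eqref{ide1nvbc} from Lemma \ref{Iddificil} in space, and then systematically eliminating all the auxiliary quantities ($\tilde B_t$, $(\mathcal M_{nv})_t$, and their derivatives) using the lower-order identities \eqref{2ndmkdv} and \eqref{Firstmkdv} from Lemma \ref{Id1mkdv}. The strategy rests on the observation that $B = \tilde B_x$, so that differentiating in $x$ converts $\tilde B_t$ into $B_t$-free expressions once we substitute the evolution relations, and that the quantity $(\mathcal M_{nv})_t$ is tied to $B$ and its derivatives through \eqref{Firstmkdv}. Concretely, I would first apply $\partial_x$ to \eqref{ide1nvbc}, producing $B_{xxt}$ and $\partial_x\big(2(\mathcal M_{nv})_t B\big)$ on the left, and terms involving $\tilde B_{tx}=B_t$ and $B_x$ on the right.

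The heart of the argument is to remove all time derivatives. From \eqref{2ndmkdv} we have $\tilde B_t = -B_{xx} - B^3 + \mu^3$, which immediately gives $B_t = \partial_x\tilde B_t = -B_{xxx} - 3B^2 B_x$; this expresses $B_{xxt}$ and $B_t$ purely in spatial terms. Next, from \eqref{Firstmkdv} one solves for $(\mathcal M_{nv})_t = \tfrac12 B_x^2 + \tfrac14 B^4 + B\tilde B_t - \mu^3 B + \tfrac34\mu^4$, and substituting the expression for $\tilde B_t$ from \eqref{2ndmkdv} yields $(\mathcal M_{nv})_t$ as an explicit polynomial in $B$, $B_x$, $B_{xx}$. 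Differentiating this relation in $x$ gives $(\mathcal M_{nv})_{tx}$, which is exactly what appears after applying $\partial_x$ to the $2(\mathcal M_{nv})_t B$ term. After these substitutions, every occurrence of $\tilde B_t$, $B_t$, $(\mathcal M_{nv})_t$ and their $x$-derivatives is replaced by spatial expressions in $B$, and what remains is a purely spatial polynomial identity in $B, B_x, B_{xx}, B_{xxx}, B_{(4x)}$. Collecting terms should reproduce \eqref{EcBnvbc}, including the constant NVBC contributions $-4(\bt^2-\al^2+\mu^2)\mu^3 - (\al^2+\bt^2)^2\mu$ that arise from the $(B-\mu)$ factor and the $\mu$-dependent constants in \eqref{2ndmkdv} and \eqref{Firstmkdv}.

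The main obstacle I anticipate is purely bookkeeping: after the substitutions one obtains a large polynomial expression, and verifying that the coefficients of each monomial in $B$ and its derivatives collapse precisely to those in \eqref{EcBnvbc} requires careful tracking of the many $\mu$-dependent constants. In particular, the discrepancy between the coefficient $6\mu^2(\bt^2 - \al^2 + \tfrac32\mu^2)$ in \eqref{ide1nvbc} and $6\mu^2(\bt^2 - \al^2 + \tfrac54\mu^2)$ in \eqref{EcBnvbc} signals that the differentiation-and-substitution step itself generates additional $\mu^4$ contributions, so the cancellations are delicate and must be done with care rather than by pattern-matching. A secondary subtlety is justifying that the nonlinear term $5BB_x^2 + 5B^2 B_{xx} + \tfrac32 B^5$ emerges correctly from $\partial_x\big(2(\mathcal M_{nv})_t B\big)$ together with $B_{xxt}$; this is where the identity \eqref{Firstmkdv}, which encodes the first-integral structure of the breather, plays its essential role. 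I would therefore carry out the computation in the order: differentiate \eqref{ide1nvbc}, substitute $B_{xxt}$ and $B_t$ via \eqref{2ndmkdv}, substitute $(\mathcal M_{nv})_{tx}$ via the differentiated \eqref{Firstmkdv}, and finally group all spatial monomials to match \eqref{EcBnvbc}.
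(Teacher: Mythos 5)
Your strategy is, in substance, the paper's own proof run in reverse. The paper starts from $J[B]$ and uses \eqref{2ndmkdv} and \eqref{Firstmkdv} to rewrite it as
\[
-\big[B_{tx}+2(\mathcal M_{nv})_tB\big]+\big(2(\bt^2-\al^2)+5\mu^2\big)\tilde B_t+\Big((\al^2+\bt^2)^2+6\mu^2\big(\bt^2-\al^2+\tfrac32\mu^2\big)\Big)(B-\mu),
\]
which vanishes by Lemma \ref{Iddificil}; you instead substitute into \eqref{ide1nvbc} and reassemble $J[B]$. The two routes use exactly the same three identities, and your diagnosis of the $\tfrac54\mu^2$ versus $\tfrac32\mu^2$ discrepancy (extra $\mu^4$ contributions coming from $2(\mathcal M_{nv})_tB$ and from the constant $\tfrac34\mu^4$ in $(\mathcal M_{nv})_t$) is correct.

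There is, however, one concrete error in your plan: applying $\partial_x$ to \eqref{ide1nvbc} at the outset is not only unnecessary, it invalidates the order count you assert at the end. After differentiation the left-hand side contains $B_{xxt}$, and substituting $B_t=-B_{xxx}-3B^2B_x$ turns this into $-B_{5x}-3\partial_x^2(B^2B_x)$; the identity you obtain is therefore fifth order in $x$ --- it is $\partial_x J[B]=0$ --- and not the fourth-order equation \eqref{EcBnvbc} which you claim ``collecting terms'' will produce. To conclude from $\partial_x J[B]=0$ you would still have to integrate in $x$ and determine the constant of integration, e.g.\ by letting $x\to\pm\infty$ and using the NVBC (where $B\to\mu$ and all derivatives decay, under which one checks $J[B]\to0$). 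The cleaner repair is simply to skip the differentiation: the term $B_{xt}$ already appearing in \eqref{ide1nvbc} equals $\partial_xB_t=\partial_x^2\tilde B_t$, hence by \eqref{2ndmkdv} equals $-B_{4x}-6BB_x^2-3B^2B_{xx}$, which is already fourth order. Substituting this together with $\tilde B_t=-B_{xx}-B^3+\mu^3$ and $(\mathcal M_{nv})_t=\tfrac12B_x^2-\tfrac34B^4-BB_{xx}+\tfrac34\mu^4$ (obtained by combining \eqref{Firstmkdv} with \eqref{2ndmkdv}) into \eqref{ide1nvbc}, and multiplying by $-1$, gives exactly \eqref{EcBnvbc}: the coefficient of $B$ collects to $(\al^2+\bt^2)^2+6\mu^2(\bt^2-\al^2)+9\mu^4-\tfrac32\mu^4=(\al^2+\bt^2)^2+6\mu^2\big(\bt^2-\al^2+\tfrac54\mu^2\big)$, and the constant terms collect to $\big(2(\bt^2-\al^2)+5\mu^2\big)\mu^3-\big((\al^2+\bt^2)^2+6\mu^2(\bt^2-\al^2)+9\mu^4\big)\mu=-4(\bt^2-\al^2+\mu^2)\mu^3-(\al^2+\bt^2)^2\mu$, as required.
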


\begin{proof}
From (\ref{2ndmkdv}) and (\ref{Firstmkdv}), one has
\begin{align*}  
& J[B] = -( \tilde B_t + B^3-\mu^3)_{xx} + (2(\bt^2 -\al^2)+5\mu^2) (\tilde B_t-\mu^3)  +
((\al^2 +\bt^2)^2 +6\mu^2(\bt^2-\al^2+\frac{5}{4}))B \nonu\\
& + 5 BB_x^2  + 5B^2 B_{xx} + \frac 32 B^5 - 4(\bt^2 -\al^2+\mu^2) - (\al^2 +\bt^2)^2\mu\nonu\\
& =   - B_{tx} - BB_x^2 + 2B^2B_{xx} + (2(\bt^2 -\al^2)+5\mu^2) (\tilde B_t-\mu^3)  +((\al^2 +\bt^2)^2 +6\mu^2(\bt^2-\al^2+\frac{5}{4}))B \nonu\\
&+ \frac 32 B^5 - 4(\bt^2 -\al^2+\mu^2) - (\al^2 +\bt^2)^2\mu\nonu\\
& = - B_{tx} + B \Big[ \frac 12 B^4 + 2B \tilde B_t -2 (\mathcal M_{nv})_t-2\mu^3B+\frac{3}{2}\mu^4 \Big]  -2B^2 ( \tilde B_t + B^3-\mu^3) + \frac 32 B^5 \nonu\\
&- 4(\bt^2 -\al^2+\mu^2) - (\al^2 +\bt^2)^2\mu\nonu\\
&  \qquad + (2(\bt^2 -\al^2) +5\mu^2)(\tilde B_t-\mu^3)  +((\al^2 +\bt^2)^2 +6\mu^2(\bt^2-\al^2+\frac{5}{4}))B\nonu\\
&= - [ B_{tx} + 2 (\mathcal M_{nv})_t B ] + [2(\bt^2 -\al^2)+5\mu^2]\tilde B_t  + [(\al^2 +\bt^2)^2 +6\mu^2(\bt^2-\al^2 + \frac{3}{2}\mu^2)]B\nonu\\
&-6(\bt^2-\al^2 + 3/2\mu^2)\mu^3 - (\al^2+\bt^2)^2\mu\nonu\\
& = - [ B_{tx} + 2 (\mathcal M_{nv})_t B ] + [2(\bt^2 -\al^2)+5\mu^2]\tilde B_t  + [(\al^2 +\bt^2)^2 +6\mu^2(\bt^2-\al^2 + \frac{3}{2}\mu^2)](B-\mu) = 0.
\end{align*}
In the last line we have used (\ref{ide1nvbc}).
\end{proof}

\subsection{Nonlinear stationary equation for Gardner breathers}
Our aim in this section is to prove that any Gardner breather solution $B_\mu$ satisfies a suitable stationary, elliptic equation, by using the close 
relation  with mKdV breathers with NVBC, as presented in Proposition \eqref{connect}.

\begin{thm}\label{GBGE} Let $B_\mu\equiv B_{\al,\bt,\mu}$ be any Gardner breather \eqref{GEBre}. Then, for any  fixed $t\in \R$, $B_\mu$ 
satisfies the nonlinear stationary equation  
\bea\label{EcBGEfinal}
& J_\mu[B_\mu]:=& B_{\mu,4x}  -   2(\beta^2 -\alpha^2) (B_{\mu,xx} + 3\mu B_\mu^2 + B_\mu^3) +  
(\alpha^2 + \beta^2)^2 B_\mu + 5 B_\mu B_{\mu,x}^2 + 5B_\mu^2 B_{\mu,xx}
\nonu\\
&&+ \frac 32 B_\mu^5  + 5B_{\mu,x}^2  + 10\mu B_\mu B_{\mu,xx} + 10\mu^2B_\mu^3 + \frac{15}{2} \mu B_\mu^4  =0.
\eea
\end{thm}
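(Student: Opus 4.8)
The plan is to derive the Gardner identity \eqref{EcBGEfinal} directly from the already-established mKdV-with-NVBC identity \eqref{EcBnvbc} via the explicit substitution of Proposition \ref{connect}. Recall that if $B = B^{nv}_{\al,\bt,\mu}$ solves the mKdV equation with NVBC $\mu$, then $B_\mu(t,x) := B(t, x + 3\mu^2 t) - \mu$ is the corresponding Gardner breather. Since the identity in Proposition \ref{GBnvbc} holds for any fixed $t$ and $J[B]$ involves only spatial derivatives, the time shift $x \mapsto x + 3\mu^2 t$ is harmless: spatial derivatives commute with it, so it suffices to set $B = B_\mu + \mu$ and verify that $J[B_\mu + \mu] = 0$ reduces algebraically to $J_\mu[B_\mu] = 0$. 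The main task is therefore the bookkeeping of the substitution $B \mapsto B_\mu + \mu$ in each term of \eqref{EcBnvbc}.

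First I would note that spatial derivatives annihilate the constant shift, so $B_x = B_{\mu,x}$, $B_{xx} = B_{\mu,xx}$, and $B_{(4x)} = B_{\mu,4x}$; only the undifferentiated powers of $B$ produce new terms. Thus I would expand the nonlinear contributions by the binomial formula: $B^3 = (B_\mu + \mu)^3 = B_\mu^3 + 3\mu B_\mu^2 + 3\mu^2 B_\mu + \mu^3$, $B^5 = (B_\mu+\mu)^5$, $B^2 B_{xx} = (B_\mu+\mu)^2 B_{\mu,xx}$, and $B B_x^2 = (B_\mu + \mu) B_{\mu,x}^2$, together with the linear replacements $B = B_\mu + \mu$ in the terms proportional to $(B_{xx}+B^3)$ and to $B$. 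I would then collect the resulting expression by powers and derivative-orders of $B_\mu$, separating the variable part from the purely constant ($\mu$-dependent) part.

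The two checks to carry out are: $(i)$ that grouping the variable terms reproduces exactly the left-hand side of \eqref{EcBGEfinal} — here the cross terms from $5B^2 B_{xx}$ and $5 B B_x^2$ generate precisely the $10\mu B_\mu B_{\mu,xx}$ and $5\mu B_{\mu,x}^2$ contributions (the latter after combining with $5 B_\mu B_{\mu,x}^2$ appropriately), while the expansions of $B^3$ and $B^5$ feed the $3\mu B_\mu^2$, $10\mu^2 B_\mu^3$, and $\tfrac{15}{2}\mu B_\mu^4$ terms, and the coefficient-shift from $2(\bt^2-\al^2)+5\mu^2$ to the bare $2(\bt^2-\al^2)$ plus the $(\al^2+\bt^2)^2$ linear term must come out consistently after absorbing the $5\mu^2$ and the $6\mu^2(\bt^2-\al^2+\tfrac54\mu^2)$ corrections present in \eqref{EcBnvbc}; and $(ii)$ that all remaining constant terms — i.e. every contribution with no factor of $B_\mu$ or its derivatives — cancel identically. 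Step $(ii)$ is in fact guaranteed in advance: evaluating \eqref{EcBnvbc} at the constant state $B \equiv \mu$ gives $J[\mu] = 0$ (one checks the two explicit constants $-4(\bt^2-\al^2+\mu^2)\mu^3 - (\al^2+\bt^2)^2\mu$ are exactly the $\mu$-terms produced by the nonlinearities at $B=\mu$), so the vacuum-subtraction is consistent and the constant part of $J[B_\mu+\mu]$ must vanish.

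The step I expect to be the main obstacle is the coefficient matching in $(i)$, in particular tracking the $\mu^2$-dependent corrections: the mKdV-NVBC identity carries the combinations $2(\bt^2-\al^2)+5\mu^2$ and $(\al^2+\bt^2)^2 + 6\mu^2(\bt^2-\al^2+\tfrac54\mu^2)$, and one must verify that after substitution these recombine so that the $\mu^2$-terms either cancel against the $\mu^2$ generated by expanding $B^3$, $B^5$, and $B^2 B_{xx}$, or get reabsorbed into the clean coefficient $(\al^2+\bt^2)^2$ multiplying $B_\mu$ and into the $10\mu^2 B_\mu^3$ term. This is a finite, purely algebraic verification with no analytic subtlety, so the proof reduces to organizing the binomial expansion carefully and reading off that every power of $B_\mu$ matches the target coefficients in \eqref{EcBGEfinal}.
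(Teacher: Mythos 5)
Your proposal is correct and is essentially the paper's own proof: the paper likewise invokes Proposition \ref{connect} to write $B_\mu = B-\mu$ and then verifies, by exactly the binomial bookkeeping you describe, that $J_\mu[B-\mu]=J[B]=0$ using Proposition \ref{GBnvbc}; your version merely runs the same substitution in the opposite direction (expanding $J[B_\mu+\mu]$ into $J_\mu[B_\mu]$), and your observation that the constant terms must cancel because $J[\mu]=0$, together with your explicit handling of the harmless shift $x\mapsto x+3\mu^2 t$, are valid refinements of the same argument.
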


\noindent

\begin{rem}
This identity can be seen as the \emph{nonlinear stationary equation} satisfied by any Gardner breather profile \eqref{GEBre}, 
and therefore it is independent of time and translation parameters $x_1,x_2\in \R$. One can compare with the Gardner soliton solution
$Q_{c,\mu}(x- ct -x_0)$, which satisfies the standard elliptic equation (\ref{eqQc}), obtained as the first variation of the $H^1$ 
Weinstein functional (\ref{H0mk}). Moreover note that \eqref{EcBGEfinal} and \eqref{GEBre} reduces to \cite[eqn. (5.2)]{AM} and \eqref{mKdV} 
(up to translations) when $\mu=0$.
\end{rem}

\begin{proof}
From Proposition \ref{connect}, we first rewrite our Gardner breather $B_\mu$ as $B_\mu = B - \mu$, where here $B$ is a mKdV breather solution 
with NVBC $\mu$ at $\pm\infty$ as we presented in \eqref{BmksdNVBC}. Hence, substituting $B_\mu = B - \mu$ in \eqref{EcBGEfinal} we obtain:

\begin{align}
& J_\mu[B-\mu]\\
&= B_{4x} - 2(\bt^2 -\al^2)\Big(B_{xx} + 3\mu(B-\mu)^2 + (B-\mu)^3\Big) + (\al^2 +\bt^2)^2 (B-\mu)+5(B-\mu)B_x^2\nonu\\
&  + 5(B-\mu)^2B_{xx} + \frac{3}{2}(B-\mu)^5 + 5\mu B_x^2 + 10\mu(B-\mu)B_{xx} + 10\mu^2(B-\mu)^3 + \frac{15}{2}\mu(B-\mu)^4\nonu\\
& = B_{4x} - 2(\beta^2 -\alpha^2) (B_{xx} + B^3)+(\al^2 + \bt^2)^2 B + 5 BB_x^2 + 5B^2 B_{xx} + \frac 32 B^5 - 2(\beta^2 -\alpha^2)\mu\nonu\\
&- 2(\bt^2-\al^2)(2\mu^3-3\mu^2B)  + 5\mu^2B_{xx} - \frac{15}{2}\mu B^4 +15\mu^2B^3 - 15\mu^3B^2 + \frac{15}{2}\mu^4B - \frac{3}{2}\mu^5-10\mu^5 \nonu\\
&-10\mu^2B_{xx} + 10\mu^2B^3-30\mu^3B^2 +30\mu^4B  + \frac{15}{2}\mu B^4 -30\mu^2B^3 + 45\mu^3B^2 -30\mu^4B + \frac{15}{2}\mu^5\nonu\\
& = B_{4x} - 2(\beta^2 -\alpha^2) (B_{xx} + B^3)+(\al^2 + \bt^2)^2 B + 5 BB_x^2 + 5B^2 B_{xx} + \frac 32 B^5  - 5\mu^2(B_{xx} + B^3)\nonu\\
& + 6\mu^2(\bt^2 - \al^2 +\frac{5}{4}\mu^2) B -4\mu^5 - 4(\bt^2 - \al^2)\mu^3 - (\al^2 +\bt^2)^2\mu\nonu\\
& = B_{(4x)} -\Big(2(\bt^2 -\al^2)+5\mu^2\Big) (B_{xx} + B^3)  +\Big((\al^2 +\bt^2)^2 +6\mu^2(\bt^2-\al^2 + \frac{5}{4}\mu^2) \Big)B \nonu\\
& + 5 BB_x^2 + 5B^2 B_{xx} + \frac 32 B^5 - 4(\bt^2 -\al^2+\mu^2)\mu^3 - (\al^2 +\bt^2)^2\mu=J[B]=0\nonu,
\end{align}
where in the last line we have used \eqref{EcBnvbc}.
\end{proof}

\ms
Although the shift parameters $x_1,x_2$ are chosen independents of time, a simple argument ensures that the previous Theorem still holds under time dependent,
translation parameters $x_1(t)$ and $x_2(t)$.

\begin{cor}\label{Cor32}
Let $B^0_{\al,\bt,\mu} = B^0_{\al,\bt,\mu}(t,x; 0,0)$ be any Gardner breather as in \eqref{GEBre}, 
and $x_1(t),$ $x_2(t)\in \R$ two continuous functions, defined for all $t$ in a given interval. Consider the modified breather
\[
 B_{\al,\bt,\mu} (t,x):=B^0_{\al,\bt,\mu}(t,x; x_1(t),x_2(t)), \qquad (\hbox{cf. \eqref{GEBre}}). 
\]
Then $B_{\al,\bt,\mu} $ satisfies \eqref{EcBGEfinal}, for all $t$ in the considered interval.
\end{cor}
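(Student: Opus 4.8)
The plan is to exploit the fact that the operator $J_\mu$ appearing in \eqref{EcBGEfinal} is purely \emph{spatial}: it is built only from the $x$-derivatives $B_{\mu,x}, B_{\mu,xx}, B_{\mu,4x}$ together with pointwise algebraic powers of $B_\mu$, and it contains no time derivative whatsoever. Consequently the identity $J_\mu[B_\mu]=0$ is really an assertion about the spatial profile of the breather at each frozen instant $t$, and it ought to be completely insensitive to the manner in which the shift parameters are allowed to depend on time.

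First I would fix an arbitrary $t$ in the given interval and freeze the two real constants $a:=x_1(t)$ and $b:=x_2(t)$. The core observation is that for every order $k$ of spatial differentiation,
\[
\partial_x^k B_{\al,\bt,\mu}(t,x) = \big(\partial_x^k B^0_{\al,\bt,\mu}\big)(t,x;a,b),
\]
since $\partial_x$ acts only on the explicit $x$-dependence and never reaches the time-dependence that enters through $x_1(t),x_2(t)$. In other words, at this fixed time $t$ the spatial derivatives of the modified breather coincide exactly with those of the genuine constant-parameter Gardner breather $x\mapsto B^0_{\al,\bt,\mu}(t,x;a,b)$.

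Because $J_\mu$ is assembled solely from spatial derivatives and pointwise powers of $B_\mu$, the preceding identity yields
\[
J_\mu[B_{\al,\bt,\mu}](t,x) = J_\mu\big[B^0_{\al,\bt,\mu}(t,\cdot;a,b)\big](x)
\]
for all $x$. The right-hand side vanishes identically by Theorem \ref{GBGE}, applied with the fixed time $t$ and the fixed shift parameters $a,b$. As $t$ was arbitrary in the interval, this establishes $J_\mu[B_{\al,\bt,\mu}]\equiv 0$ throughout, as claimed.

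There is essentially no obstacle here: the only thing to verify carefully is the commutation of $\partial_x^k$ with the substitution $x_i\mapsto x_i(t)$, and this is immediate precisely because \eqref{EcBGEfinal} carries no time derivative, so no term of the form $\dot x_i(t)$ can ever be produced. I would also remark that the continuity of $x_1,x_2$ is not actually needed for this pointwise-in-$t$ identity; it serves only to guarantee that the resulting modified breather is a well-defined (and appropriately regular) function of the pair $(t,x)$.
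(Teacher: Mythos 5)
Your proposal is correct and is essentially the paper's own argument: the paper disposes of this corollary in one line, citing the invariance of \eqref{EcBGEfinal} under spatial translations, which is precisely your observation that $J_\mu$ involves only spatial derivatives and pointwise powers, so at each frozen time $t$ one may apply Theorem \ref{GBGE} with the constant parameters $a=x_1(t)$, $b=x_2(t)$. Your write-up merely makes explicit the freezing-of-time step (and the remark that continuity of $x_1,x_2$ is not needed for the pointwise identity), which the paper leaves implicit.
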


\begin{proof}
A direct consequence of the invariance of the equation (\ref{EcBGEfinal}) under spatial translations. 
\end{proof}

\medskip

\section{Variational characterization of Gardner breathers}\label{5}

\medskip
In this section we introduce a new $H^2$-Lyapunov functional for the Gardner equation (\ref{GE}). 
Consider  $w_0\in H^2(\R)$ and let $w=w(t) \in H^2(\R)$ be the associated local 
in time solution of the Cauchy problem associated to (\ref{GE}), with initial condition $w(0)=w_0$ (cf. \cite{KPV}). 
We first define the $H^2$-functional 

\be\label{F1ga}
\begin{aligned}
 F_\mu[w](t)  &  :=   \frac 12 \int_\R w_{xx}^2dx - 5\mu\int_\R ww^2_xdx + \frac{5}{2}\mu^2\int_\R w^4dx 
 - \frac{5}{2} \int_\R w^2w_x^2dx+ \frac{3}{2}\mu \int_\R w^5+ \frac{1}{4} \int_\R w^6dx.
\end{aligned}
\ee

\noindent
\begin{lem}\label{dF10} Let $w$ be a local (in time) $H^2$-solution of the the Gardner equation \eqref{GE} with initial data $w_0$. Then 
the functional $F_\mu[w](t)$ \eqref{F1ga} is a conserved quantity. Moreover, $w$ is a global (in time) $H^2$-solution.
\end{lem}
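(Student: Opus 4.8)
The plan is to establish the two assertions separately: first that $F_\mu$ is invariant along the flow, and then to use this invariance, together with the conserved mass $M[w]$, to bound $\|w(t)\|_{H^2}$ uniformly in time and thereby upgrade local to global existence.

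For the \emph{conservation} of $F_\mu$, I would differentiate \eqref{F1ga} in time and substitute the equation in the form $w_t=-\partial_x(w_{xx}+3\mu w^2+w^3)$. After repeated integration by parts, every contribution should organize into the integral of an exact spatial derivative, which vanishes because $w(t)\in H^2(\R)$ decays at $\pm\infty$; hence $\frac{d}{dt}F_\mu[w](t)=0$. To make this rigorous for a merely $H^2$ solution, I would first carry out the computation for smooth, rapidly decaying solutions, where all integrations by parts are justified, and then pass to the general case by density of Schwartz data together with the continuous dependence of the flow in $H^2$ supplied by the well-posedness theory \cite{KPV}. A conceptually cleaner alternative, consistent with the strategy of this paper, is to \emph{derive} $F_\mu$ from the known third ($H^2$-level) conserved quantity of mKdV: by Proposition \ref{connect}, if $u$ solves \eqref{mKdV} with NVBC $\mu$ then $w(t,x)=u(t,x+3\mu^2t)-\mu$ solves \eqref{GE}, and substituting $u=w+\mu$ into the mKdV invariant produces, after collecting the lower-order terms proportional to $M[w]$ and $E_\mu[w]$, precisely the functional \eqref{F1ga}; invariance of $F_\mu$ then follows from invariance of the mKdV quantity.

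For \emph{global existence}, the key point is an a priori bound on $\|w_{xx}\|_{L^2}$. Isolating the leading term, I would write
\be
\tfrac12\|w_{xx}\|_{L^2}^2 = F_\mu[w] + 5\mu\int_\R w w_x^2 - \tfrac52\mu^2\int_\R w^4 + \tfrac52\int_\R w^2 w_x^2 - \tfrac32\mu\int_\R w^5 - \tfrac14\int_\R w^6,
\ee
and estimate each nonlinear term by the one-dimensional Gagliardo--Nirenberg inequalities, using the interpolation bounds $\|w\|_{L^\infty}\lesssim \|w\|_{L^2}^{3/4}\|w_{xx}\|_{L^2}^{1/4}$ and $\|w_x\|_{L^2}\lesssim \|w\|_{L^2}^{1/2}\|w_{xx}\|_{L^2}^{1/2}$. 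A direct check shows that in each of the six nonlinear integrals the factor $\|w_{xx}\|_{L^2}$ appears to a power \emph{strictly less than} $2$ (the largest being $3/2$, attained by $\int_\R w^2 w_x^2$), the remaining factors involving only $\|w\|_{L^2}$, which is controlled by the conserved mass $M[w]=M[w_0]$. Young's inequality then bounds each term by $\epsilon\|w_{xx}\|_{L^2}^2 + C_\epsilon(\|w_0\|_{L^2})$. Choosing $\epsilon$ small enough to absorb the six contributions into the left-hand side, and recalling $F_\mu[w]=F_\mu[w_0]$ from the first part, yields
\be
\tfrac14\|w_{xx}(t)\|_{L^2}^2 \le \abs{F_\mu[w_0]} + C(\|w_0\|_{L^2}),
\ee
a bound uniform in $t$; combined with the conservation of mass, this controls $\|w(t)\|_{H^2}$ uniformly. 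The blow-up alternative of the local theory \cite{KPV} then forbids finite-time blow-up of the $H^2$ norm, so the solution is global.

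The main obstacle is the first part: verifying by hand that the time derivative of the six-term functional \eqref{F1ga} collapses to a perfect spatial derivative is a lengthy algebraic computation, and, because of the low regularity, its rigorous justification requires the approximation argument above rather than a purely formal manipulation. The global-existence step, by contrast, is comparatively soft: its success hinges only on the $H^2$-\emph{subcriticality} of \eqref{GE}, namely that each nonlinearity in $F_\mu$ costs strictly fewer than two derivatives, which is exactly what permits the absorption into $\tfrac12\|w_{xx}\|_{L^2}^2$.
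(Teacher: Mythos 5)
Your proposal is correct and matches the paper's (essentially unwritten) argument: the paper simply declares that conservation of $F_\mu$ is ``an easy computation'' arising from integrability and leans on the known well-posedness theory \cite{Ale11,KPV} for globality. Your direct differentiation-plus-density argument for the conservation, and the Gagliardo--Nirenberg/Young absorption yielding a uniform bound on $\|w_{xx}(t)\|_{L^2}$ from the conserved $F_\mu$ and $M$, are precisely the standard details the paper omits.
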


The existence of this last conserved quantity is a deep consequence of the
\emph{integrability property} for the Gardner equation. In particular,  it is not present in a general, non-integrable gKdV equation. 
The proof of Lemma \ref{dF10} is an easy computation. Moreover, as we did for the mass and the energy \eqref{M1}-\eqref{E2}, 
we  are also able  to get an explicit formula of $F_\mu$ at the Gardner breather $B_\mu$, by using
the following relations between breathers and solitons which are valid for the Gardner (and mKdV) equation
 
 \be\label{r1mass}M[B_\mu] = 2 Re \Big[M[Q_{c,\mu}]|_{\sqrt{c}=\bt+i\al}\Big]\qquad\text{and}\qquad E_\mu[B_\mu] = 2 Re\Big[ E_\mu[Q_{c,\mu}]|_{\sqrt{c}=\bt+i\al}\Big].\ee
 \noindent
 \ms
 
\begin{lem}\label{FBre} Let $B_{\mu}=B_{\al,\bt,\mu}$ be any Gardner breather, for $\al, \bt,\mu$ as in definition \eqref{GEBre}. 
Then we have that\\
\be\label{FBreG}
 F_\mu[B_\mu] := \frac{4}{15}\Big(3\bt(\bt^4-10\bt^2\al^2+5\al^4) -10\mu^2\bt(\bt^2-3\al^2-6\mu^2) \Big)
 + 8\sqrt{2}\mu^5\arctan\Big[\frac{2\sqrt{2}\mu\bt}{\Delta}\Big].
\ee
\end{lem}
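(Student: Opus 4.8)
The plan is to compute $F_\mu[B_\mu]$ by exploiting the complexification relation \eqref{r1mass}, which expresses breather quantities as twice the real part of the corresponding soliton quantities under the substitution $\sqrt{c}=\bt+i\al$. I would conjecture and verify that the same principle holds for the higher-order functional $F_\mu$, namely that
\be\label{Fcomplex}
F_\mu[B_\mu] = 2\,\re\Big[F_\mu[Q_{c,\mu}]\big|_{\sqrt{c}=\bt+i\al}\Big].
\ee
This reduces the problem to two tasks: first, computing the scalar quantity $F_\mu[Q_{c,\mu}]$ for the explicit soliton profile \eqref{fexplicita2}, and second, performing the analytic continuation $\sqrt{c}\mapsto\bt+i\al$ and extracting the real part.

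\textbf{First} I would evaluate $F_\mu$ at the Gardner soliton. Since $Q_{c,\mu}$ is known explicitly and satisfies the first-order identity $(Q_{c,\mu}')^2 = c\,Q_{c,\mu}^2 - 2\mu Q_{c,\mu}^3 - \tfrac12 Q_{c,\mu}^4$ (stated in the remark after Lemma \ref{Id1}) together with the elliptic equation \eqref{eqQc}, I can eliminate all derivative terms $w_{xx}^2$, $ww_x^2$, $w^2w_x^2$ appearing in \eqref{F1ga} in favor of pure powers of $Q_{c,\mu}$. Differentiating \eqref{eqQc} and the first-order identity lets me rewrite $\int Q_{c,\mu,xx}^2$, $\int Q_{c,\mu}Q_{c,\mu,x}^2$ and $\int Q_{c,\mu}^2 Q_{c,\mu,x}^2$ as linear combinations of the moments $\int Q_{c,\mu}^k$ for $k=2,\dots,6$. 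Each such moment is an elementary integral of $[\mu+\sqrt{\mu^2+c/2}\cosh(\sqrt{c}s)]^{-k}$, computable via the same $\arctan$-type formulas used for the mass \eqref{MsolG} and energy \eqref{EQc} (again citing \cite{GradRyz}). Assembling these yields $F_\mu[Q_{c,\mu}]$ as a polynomial in $\sqrt{c}$ and $\mu$ plus an $\arctan$ term of the form $\arctan[\sqrt{c}/(\sqrt2\mu)]$, the coefficient of which I expect to be $\pm 4\sqrt2\,\mu^5$ so that after doubling it reproduces the $8\sqrt2\mu^5$ factor in \eqref{FBreG}.

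\textbf{Then} I would substitute $\sqrt{c}=\bt+i\al$ into the resulting closed form and take twice the real part. The polynomial part contributes through the real parts of powers $(\bt+i\al)^k$: the term $3\bt(\bt^4-10\bt^2\al^2+5\al^4)$ in \eqref{FBreG} is precisely $3\,\re[(\bt+i\al)^5]$, confirming that the leading $c^{5/2}$-type contribution of the soliton functional continues correctly. The $\mu^2$-corrections and the $\arctan$ term are handled by taking real parts of $(\bt+i\al)^3$, $(\bt+i\al)$, and $\re[\arctan((\bt+i\al)/(\sqrt2\mu))]$; the latter must be shown to equal $\tfrac12\arctan[2\sqrt2\mu\bt/\Delta]$, consistent with the identity already established for the mass in Lemma \ref{ME} and energy in Lemma \ref{ME2}, where $\Delta=\al^2+\bt^2-2\mu^2$ arises exactly from such a complex-$\arctan$ reduction.

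\textbf{The main obstacle} is twofold. The sharper difficulty is \emph{justifying} the complexification formula \eqref{Fcomplex} rather than merely using it: one must argue that the real breather $B_\mu$, which is genuinely two-dimensional in $(y_1,y_2)$, yields the same functional value as the formal analytic continuation of the one-dimensional soliton integral. I expect this to follow either from a direct bilinear/Hirota computation analogous to Lemmas \ref{ME} and \ref{ME2} — using the partial-mass identity \eqref{MasssBG1} and the nonlinear identities of Lemma \ref{Id1} to reduce $F_\mu[B_\mu]$ to boundary terms plus lower moments — or by invoking the same mechanism that validates \eqref{r1mass}. The more tedious obstacle is the sheer bookkeeping: \eqref{F1ga} contains six distinct terms of differing homogeneity in $\mu$, and eliminating the three derivative integrals cleanly requires careful repeated use of integration by parts together with the two soliton identities, with real risk of sign or coefficient errors in the $\mu$-dependent pieces. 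Managing this algebra so that the final real-part extraction collapses to the compact form \eqref{FBreG} is where the computation must be executed with the most care.
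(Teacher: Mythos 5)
Your proposal is correct and follows essentially the same route as the paper: the paper's proof likewise first integrates \eqref{F1ga} directly at the soliton, obtaining $F_\mu[Q_{c,\mu}] = \frac{2}{15}\sqrt{c}\,\big(3c^2-10\mu^2c+60\mu^4\big) - 8\sqrt{2}\,\mu^5\arctan\!\big(\tfrac{\sqrt{c}}{\sqrt{2}\mu}\big)$, and then invokes the complexification rule $F_\mu[B_\mu]=2\re\big[F_\mu[Q_{c,\mu}]\big|_{\sqrt{c}=\bt+i\al}\big]$ exactly as in \eqref{r1mass}, without further justification of that rule. Note only that the $\arctan$ coefficient at the soliton is $-8\sqrt{2}\mu^5$ rather than your anticipated $\pm4\sqrt{2}\mu^5$ (the factor $2\re$ combines with $\re\arctan\mapsto-\tfrac12\arctan\big[2\sqrt{2}\mu\bt/\Delta\big]$, up to an additive constant, to flip the sign rather than double the magnitude); since your plan computes this coefficient rather than assumes it, this slip does not affect the argument.
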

\ms
\begin{proof}
 Firstly,  integrating directly \eqref{F1ga} we get an expression for $F_\mu$ at $Q_{c,\mu}$:
 
 \be\label{r3FQ}F_\mu[Q_{c,\mu}] = \frac{2}{15}\sqrt{c}\Big(3c^2 - 10\mu^2c + 60\mu^4\Big) -8\sqrt{2}\mu^5\arctan(\frac{\sqrt{c}}{\sqrt{2}\mu}).\ee
 \ms
 \noindent
Now, applying the same strategy used for relations \eqref{r1mass}, we get
 \be\label{r4F}F_\mu[B_\mu] = 2 Re\Big[ F_\mu[Q_{c,\mu}]|_{\sqrt{c}=\bt+i\al}\Big]=\eqref{FBreG}.\ee
 \end{proof}


Using the functional $F_\mu[w]$ \eqref{F1ga}, we build a new $H^2$-Lyapunov functional specifically associated to the breather solution. 
Let $B_\mu = B_{\al,\bt,\mu}$ be any Gardner breather, and $t\in \R$, and $M[w]$ and $E_\mu[w]$ given in \eqref{M1} and \eqref{E2} respectively. We define

\ms
\bea\label{LyapunovGE}
\mathcal H_\mu[w](t) := F_\mu[w](t) + 2(\bt^2-\al^2)E_\mu[w](t) + (\al^2 +\bt^2)^2 M[w](t).
\eea
\ms
\noindent

Therefore, $\mathcal H_\mu[w]$ is  a real-valued conserved quantity, well-defined for $H^2$-solutions of (\ref{GE}). 
Note additionally that the functionals $\mathcal{H}_{\mu=0}$ and $\mathcal H_\mu$ for the mKdV and Gardner equations are surprisingly the same.

\begin{lem}\label{LyaBG} Let $B_{\mu}=B_{\al,\bt,\mu}$ be any Gardner breather, for $\al, \bt,\mu$ as in definition \eqref{GEBre}. 
Then we have that
\bea\label{LyapunovBreG0}
\mathcal H_\mu[B_\mu] := h_1 + h_22\sqrt{2}\mu\arctan\Big[\frac{2\sqrt{2}\mu\bt}{\Delta}\Big],\eea

\bea\label{LyapunovBreG00}h_1=\frac{8\bt}{15}\Big(4\bt^4 + 20\al^2\bt^2+ 5\mu^2(5\bt^2-3\al^2+6\mu^2)\Big),\quad 
h_2=\Big((\al^2+\bt^2)^2 + 4\mu^2(\bt^2-\al^2 + \mu^2)\Big).
\eea

\end{lem}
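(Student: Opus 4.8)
The plan is to compute $\mathcal H_\mu[B_\mu]$ by direct substitution of the three closed-form quantities already established, namely $M[B_\mu]$ from Lemma \ref{ME}, $E_\mu[B_\mu]$ from Lemma \ref{ME2}, and $F_\mu[B_\mu]$ from Lemma \ref{FBre}, into the definition \eqref{LyapunovGE}. The only real work is to group the resulting expression into its transcendental (arctangent) part and its polynomial part in $\al,\bt,\mu$, and to verify that each collapses to the claimed $h_2$ and $h_1$ respectively. Since all three ingredients carry the same arctangent $\arctan[2\sqrt2\mu\bt/\Delta]$, the two-term structure of the answer in \eqref{LyapunovBreG0} is automatic; what must be checked are the precise coefficients.

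First I would isolate the arctangent contribution. The arctangent in $F_\mu[B_\mu]$ appears with factor $8\sqrt2\mu^5$; in $2(\bt^2-\al^2)E_\mu[B_\mu]$ with factor $2(\bt^2-\al^2)\cdot 4\sqrt2\mu^3$; and in $(\al^2+\bt^2)^2M[B_\mu]$ with factor $(\al^2+\bt^2)^2\cdot 2\sqrt2\mu$. Pulling out the common $2\sqrt2\mu$, their sum is
\begin{equation}
2\sqrt2\mu\Big[(\al^2+\bt^2)^2 + 4\mu^2(\bt^2-\al^2) + 4\mu^4\Big]=2\sqrt2\mu\,h_2,
\end{equation}
which is exactly the transcendental term of \eqref{LyapunovBreG0}.

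Next I would collect the polynomial part. Writing each contribution over the common denominator $15$ and extracting the overall factor $\bt$, the key observation is that the pure $\al^4$ terms coming from $F_\mu[B_\mu]$, from $2(\bt^2-\al^2)E_\mu[B_\mu]$ and from $(\al^2+\bt^2)^2M[B_\mu]$ cancel exactly (their coefficients, in units of $\bt/15$, being $60$, $-120$ and $60$), while the surviving $\bt^4$, $\al^2\bt^2$, $\mu^2\bt^2$, $\mu^2\al^2$ and $\mu^4$ coefficients combine to
\begin{equation}
\frac{\bt}{15}\Big(32\bt^4 + 160\al^2\bt^2 + 200\mu^2\bt^2 - 120\mu^2\al^2 + 240\mu^4\Big) = h_1,
\end{equation}
after factoring out $8/15$ and rewriting in the form \eqref{LyapunovBreG00}.

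There is no conceptual obstacle: the lemma is a consequence of three previously established explicit evaluations, so the proof is pure bookkeeping. The one place where genuine care is required is the polynomial collection, where the vanishing of the $\al^4$ coefficient and the correct grouping of the mixed $\mu^2\bt^2$ and $\mu^2\al^2$ terms must be tracked. An arithmetic slip there is the most likely source of error, so I would organize the computation as the coefficient table implicit above, making the cancellations transparent before reassembling $h_1$ and $h_2$.
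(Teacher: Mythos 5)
Your proposal is correct and follows exactly the paper's own route: the paper's proof is precisely the substitution of the closed-form mass \eqref{MassBG}, energy \eqref{EnergyB}, and $F_\mu[B_\mu]$ \eqref{FBreG} into \eqref{LyapunovGE}, followed by collecting terms. Your coefficient bookkeeping (the cancellation of the $\al^4$ terms and the grouping of the arctangent factors into $2\sqrt{2}\mu\,h_2$) checks out and simply makes explicit what the paper states in one line.
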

\begin{proof}
Collecting the mass \eqref{MassBG}, the energy \eqref{EnergyB} and $F_\mu$ at $B_\mu$ \eqref{FBreG}, 
and substituting at \eqref{LyapunovGE}, we get \eqref{LyapunovBreG00}.
\end{proof}

\ms
\noindent
Moreover, one has the following

\begin{lem}\label{crit}
Gardner breathers $B_\mu$ \eqref{GEBre} are critical points of the Lyapunov functional $\mathcal H_\mu$ \eqref{LyapunovGE}.
In fact, for any $z\in H^2(\R)$  with sufficiently small $H^2$-norm, and $B_\mu=B_{\al,\bt,\mu}$  any Gardner breather solution,
then, for all $t\in \R$,  one has
\be\label{EE}
\mathcal{H}_\mu[B_\mu +z] - \mathcal{H}_\mu[B_\mu]  = \frac 12\mathcal Q_\mu[z] + \mathcal N_\mu[z],
\ee
with $\mathcal Q_\mu$ being the quadratic form defined in \eqref{Qmu}, and $\mathcal N_\mu[z]$ satisfying $|\mathcal N_\mu[z] | \leq K\|z\|_{H^2(\R)}^3.$
\end{lem}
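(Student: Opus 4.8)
The plan is to compute the Taylor expansion of $\mathcal H_\mu[B_\mu + z]$ around the breather $B_\mu$, exploiting the fact that $\mathcal H_\mu$ is a polynomial functional in $w$ and its derivatives up to second order. Since $\mathcal H_\mu[w] = F_\mu[w] + 2(\bt^2-\al^2)E_\mu[w] + (\al^2+\bt^2)^2 M[w]$ and each of $F_\mu$, $E_\mu$, $M$ is an integral of a polynomial in $w, w_x, w_{xx}$, I would expand each integrand in powers of $z$ and its derivatives. Writing schematically $\mathcal H_\mu[B_\mu + z] = \mathcal H_\mu[B_\mu] + D\mathcal H_\mu[B_\mu]\cdot z + \tfrac12 D^2\mathcal H_\mu[B_\mu]\cdot(z,z) + \mathcal N_\mu[z]$, the three terms organize the proof: the zeroth-order term cancels on the left side, the first-order (linear) term must vanish, the second-order term is by definition the quadratic form $\tfrac12\mathcal Q_\mu[z]$, and everything of cubic order or higher is collected into $\mathcal N_\mu[z]$.

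First I would show that the linear term $D\mathcal H_\mu[B_\mu]\cdot z$ vanishes. After integrating by parts to move all derivatives off $z$, this term becomes $\int_\R z \cdot J_\mu[B_\mu]\,dx$, where $J_\mu$ is precisely the fourth-order differential operator appearing in the nonlinear stationary equation \eqref{EcBGEfinal}. Indeed $J_\mu[B_\mu]$ is the Euler--Lagrange expression, i.e. the first variation $\delta \mathcal H_\mu / \delta w$ evaluated at $B_\mu$; I would verify by inspection that the variational derivatives of $F_\mu$, $E_\mu$ and $M$ assemble exactly into the left-hand side of \eqref{EcBGEfinal}. By Theorem \ref{GBGE} we have $J_\mu[B_\mu] = 0$, so the linear term is zero, which establishes the critical-point statement. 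The boundary terms produced by integration by parts vanish because $B_\mu$ and its derivatives are Schwartz in $x$ and $z \in H^2(\R)$.

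Next I would define $\tfrac12 \mathcal Q_\mu[z]$ as the collection of all terms exactly quadratic in $z$ (counting each derivative of $z$ as order one), which yields the quadratic form referenced as \eqref{Qmu}; this is the operator whose spectral analysis is carried out in Sect.\ref{SpecAn}. Finally, for the remainder $\mathcal N_\mu[z]$, I would gather all monomials of total degree at least three in $(z, z_x, z_{xx})$, each multiplied by a bounded coefficient built from $B_\mu, B_{\mu,x}, B_{\mu,xx}$ (which are bounded since $B_\mu$ is smooth and Schwartz). The cubic bound $|\mathcal N_\mu[z]| \le K\|z\|_{H^2(\R)}^3$ then follows from estimating each such term by Hölder and the Sobolev embedding $H^2(\R)\hookrightarrow L^\infty(\R)$, using $\|z\|_{H^2}$ small so that the higher powers are dominated by the cubic term.

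The main obstacle is purely bookkeeping rather than conceptual: $F_\mu$ already contains six distinct polynomial terms (up to degree six in $w$, with up to two derivatives), so expanding $(B_\mu + z)$ through these and organizing the output by order in $z$ is lengthy and error-prone. The one genuine verification to be careful about is confirming that the first variation of $\mathcal H_\mu$ reproduces $J_\mu$ term by term; once that identification is made, vanishing of the linear term is immediate from \eqref{EcBGEfinal}, and the cubic remainder estimate is a routine application of Sobolev embedding.
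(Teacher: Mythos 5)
Your proposal follows essentially the same route as the paper: a direct polynomial expansion of $\mathcal H_\mu[B_\mu+z]$, identification of the linear term with $\int_\R J_\mu[B_\mu]\,z$ (which vanishes by Theorem \ref{GBGE}), the quadratic part with $\tfrac12\mathcal Q_\mu[z]$ as in \eqref{Qmu}, and a cubic remainder bound via H\"older and the embedding $H^2(\R)\hookrightarrow L^\infty(\R)$. The paper simply carries out the term-by-term bookkeeping that you correctly describe as routine, so the proposal is sound.
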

\begin{proof}
We compute:
\begin{align*}
&\mathcal{H}_\mu[B_\mu+z] =  \frac 12 \int_\R (B_\mu+z)_{xx}^2 -\frac 52 \int_\R (B_\mu+z)^2(B_\mu+z)_x^2 + \frac 14 \int_\R (B_\mu+z)^6\\
& - 5 \mu\int_\R(B_\mu+z)(B_\mu+z)_x^2 +\frac{3}{2}\mu\int_\R (B_\mu+z)^5 + \frac{5}{2}\mu^2\int_\R(B_\mu+z)^4 + (\bt^2-\al^2) \int_\R (B_\mu+z)_x^2 \\
&- \frac 12 (\bt^2 -\al^2) \int_\R (B_\mu+z)^4 - 2 \mu (\bt^2 -\al^2) \int_\R (B_\mu+z)^3 +\frac 12 (\al^2 +\bt^2)^2 \int_\R (B_\mu+z)^2
\end{align*}
\begin{align*}
& =  \frac 12 \int_\R B_{\mu,xx}^2 -\frac 52 \int_\R B_\mu^2B_{\mu,x}^2 + \frac 14 \int_\R B_\mu^6 -5\mu\int_\R B_\mu B_{\mu,x}^2
  + \frac 32 \mu\int_R B_\mu^5 + \frac{5}{2}\mu^2\int_\R B_\mu^4\\
&  + (\bt^2-\al^2) \int_\R B_{\mu,x}^2 - \frac 12 (\bt^2 -\al^2) \int_\R B_\mu^4
-2(\bt^2 -\al^2)\mu\int_\R B_\mu^3 +\frac 12 (\al^2 +\bt^2)^2 \int_\R B_\mu^2\\
&  + \int_\R \big[ B_{\mu,4x}  -   2(\beta^2 -\alpha^2) (B_{\mu,xx} +3\mu B_\mu^2 + B_\mu^3) + (\alpha^2 + \beta^2)^2 B_\mu + 5 B_\mu B_{\mu,x}^2\\
&   + 5B_\mu^2 B_{\mu,xx} + \frac 32 B_\mu^5  + 5B_{\mu,x}^2  + 10\mu B_\mu B_{\mu,xx} 
+ 10\mu^2 B_\mu^3 + \frac{15}{2} \mu B_\mu^4 \big] z\\
&  +\frac 12 \Big[  \int_\R z_{xx}^2 + 2(\bt^2 -\al^2)\int_\R z_{x}^2 +(\al^2 +\bt^2)^2\int_\R z^2 - 5\int_\R B_\mu^2 z_x^2\\
&  - 10\mu\int_\R B_{\mu}z_{x}^2  + 10\int_\R B_\mu B_{\mu,x} z_xz+\int_\R(5B_{\mu,x}^2 +10B_\mu B_{\mu,xx} +\frac{15}{2} B_\mu^4\\
&  -6(\beta^2-\al^2)B_\mu^2)z^2  + 3\mu\int_\R(10 B_\mu^3- 4(\beta^2-\al^2)B_\mu + \frac{10}{3}B_{\mu,xx} + 10\mu B_\mu^2)z^2\Big]\\
&  -\frac52\int_\R( z^2z_x^2 + 2B_{\mu,x}z^2z_x + 2B_\mu zz_x^2) + \int_\R 5B_\mu^3z^3 + \frac{15}{4}B_\mu^2z^4 +\frac 32 \int_\R B_\mu z^5 +\frac 14 \int_\R z^6\\
& -5\mu\int_\R zz_x^2 +15\mu\int_\R B_\mu^2z^3 + \frac{15}{2}\mu \int_\R B_\mu z^4 +\frac{3\mu}{2}\int_\R z^5 + 10\mu^2\int_\R B_\mu z^3\\
& +\frac{5}{2}\mu^2\int_\R z^4 -2(\bt^2-\al^2)\int_\R B_\mu z^3  -\frac 12 (\bt^2-\al^2)\int_\R z^4  -2\mu(\bt^2-\al^2)\int_\R z^3.
\end{align*}
We finally obtain:

\[
\mathcal{H}_\mu[B_\mu+z]   =  \mathcal{H_\mu}[B_\mu] + \int_\R J_\mu[B_\mu] z(t)  + \frac 12\mathcal Q_\mu[z] + \mathcal N_\mu[z],
\]
where the quadratic form $\mathcal Q_\mu$, associated to the linearized operator $\mathcal L_\mu$ \eqref{LGE}, is defined in the following way:  
\be\label{Qmu}
\begin{aligned}
\mathcal Q_\mu[z] & := \int_\R z \mathcal L_\mu[z] =   \int_\R z_{xx}^2 + 2(\bt^2 -\al^2)\int_\R z_{x}^2 +(\al^2 +\bt^2)^2\int_\R z^2 
- 5\int_\R B_\mu^2 z_x^2 - 10\mu\int_\R B_{\mu}z_{x}^2  \\
&\quad +10\int_\R B_\mu B_{\mu,x} z_xz+\int_\R(5B_{\mu,x}^2 +10B_\mu B_{\mu,xx} +\frac{15}{2} B_\mu^4 -6(\beta^2-\al^2)B_\mu^2)z^2\\
&\quad  + 10\mu\int_\R B_{\mu,x}z_{x}z + 3\mu\int_\R(10 B_\mu^3 - 4(\beta^2-\al^2)B_\mu + \frac{10}{3}B_{\mu,xx}+ 10\mu B_\mu^2)z^2.
\end{aligned}
\ee
\noindent
Note that, from Theorem \ref{GBGE}, one has $J_\mu[B_\mu] \equiv 0$. Finally, the term $N_\mu[z]$ is given by 
\begin{align*}
\mathcal N_\mu[z]  := &  -\frac52\int_\R( z^2z_x^2 + 2B_{\mu,x}z^2z_x + 2B_\mu zz_x^2) + \int_\R 5B_\mu^3z^3 + \frac{15}{4}B_\mu^2z^4\\
& +\frac 32 \int_\R B_\mu z^5 +\frac 14 \int_\R z^6 -5\mu\int_\R zz_x^2 +15\mu\int_\R B_\mu^2z^3 + \frac{15}{2}\mu \int_\R B_\mu z^4\\
& +\frac{3\mu}{2}\int_\R z^5 + 10\mu^2\int_\R B_\mu z^3 +\frac{5}{2}\mu^2\int_\R z^4 -2(\bt^2-\al^2)\int_\R B_\mu z^3 \\
& - \frac 12 (\bt^2-\al^2)\int_\R z^4  - 2\mu(\bt^2-\al^2)\int_\R z^3.
\end{align*}
Therefore, from direct estimates one has $\mathcal N_\mu[z] = O(\|z\|_{H^2(\R)}^3),$ as desired.
\end{proof}

\section{Spectral properties around Gardner breathers}\label{SpecAn}
Let $z\in H^4(\R)$, and $B_{\mu}$ be any Gardner breather, with shift parameters $x_1,x_2$. We define  $\mathcal L_\mu$ as the linearized operator 
associated to $B_\mu$, i.e. the bilinear operator obtained after a
 linearization of the Lyapunov functional \eqref{LyapunovGE}  at the Gardner breather $B_{\mu}$, as follows: 
\be\label{LGE}
\begin{aligned}
\mathcal L_\mu[z] & := z_{(4x)} -2(\beta^2-\al^2) z_{xx} +(\al^2+\beta^2)^2 z \\
& \quad +5B_\mu^2 z_{xx} +10B_\mu B_{\mu,x} z_x +(5B_{\mu,x}^2 +10B_\mu B_{\mu,xx} +\frac{15}{2} B_\mu^4 -6(\beta^2-\al^2)B_\mu^2)z\\
& \quad + 10\mu B_{\mu}z_{xx} + 10\mu B_{\mu,x}z_{x} + 3\mu\Big[10 B_\mu^3 - 4(\beta^2-\al^2)B_\mu + \frac{10}{3}B_{\mu,xx}
+ 10\mu B_\mu^2\Big]z.
\end{aligned}
\ee

The following concept, associated to a bilinear operator like $\mathcal Q_\mu$ \eqref{Qmu},  is standard and it will be useful for us.

\begin{defn}\label{Neg_dir}
Any nonzero  $z \in H^2$ is said to be a positive (null, negative) direction for $\mathcal Q_\mu$ if we have $\mathcal Q_\mu[z] >0$ $(= 0,< 0)$.
\end{defn}

Essential for the proof of the main result of this work, Theorem \ref{T1p8}, is the spectral study of the associated linear operator $\mathcal L_\mu$ appearing from Theorem \ref{GBGE} and 
particularly, equation \eqref{EcBGEfinal}. Hence, in this section we describe the spectrum of this operator. More precisely, 
our main purpose is to find a suitable coercivity property, 
independently of the nature of scaling parameters. The main result of this section is contained in Proposition \ref{PropOrtog}. Part of the 
analysis carried out in this section has been previously introduced for solitons by Lax \cite{LAX1},  Maddocks-Sachs \cite{MS} and for mKdV breathers by
Alejo-Mu\~noz \cite{AM}, so we follow their arguments adapted to the Gardner breather case, sketching several proofs.  %

\begin{lem} $\mathcal L_\mu$ is a linear, unbounded operator in $L^2(\R)$, with dense domain $H^4(\R)$. Moreover, $\mathcal L_\mu$ is self-adjoint. 
\end{lem}


From standard spectral theory of unbounded operators with rapidly decaying coefficients, it is enough to prove that $\mathcal L_\mu^* =\mathcal L_\mu$ in $H^4(\R)$. 
\begin{proof}
Let $z,w\in H^4(\R)$. Integrating by parts, one has
\begin{align*}
&\int_\R w \mathcal L_\mu[z]  = \int_\R  w\big[ z_{(4x)} -2(\beta^2-\al^2) z_{xx} +(\al^2+\beta^2)^2 z+5B_\mu^2 z_{xx} +10B_\mu B_{\mu,x} z_x \big] \\
&    + \int_\R \big[5B_{\mu,x}^2 +10B_\mu B_{\mu,xx} +\frac{15}{2} B_\mu^4 -6(\beta^2-\al^2)B_\mu^2 \big] z w\\
&    + \int_\R \big[10\mu B_{\mu}z_{xx} + 10\mu B_{\mu,x}z_{x} + 3\mu(10 B_\mu^3 - 4(\beta^2-\al^2)B_\mu + \frac{10}{3}B_{\mu,xx}+ 10\mu B_\mu^2)z\big]w\\
& =\int_\R  z\big[ w_{(4x)} -2(\beta^2-\al^2) w_{xx} +(\al^2+\beta^2)^2 w+5B_\mu^2 w_{xx} +10B_\mu B_{\mu,x} w_x\big]  \\
&    + \int_\R z\big[5B_{\mu,x}^2w +10B_\mu B_{\mu,xx}w +\frac{15}{2} B_\mu^4w -6(\beta^2-\al^2)B_\mu^2w \big]  \\
&    + \int_\R z\big[10\mu B_{\mu}w_{xx} + 10\mu B_{\mu,x}w_{x} + 3\mu(10 B_\mu^3 - 4(\beta^2-\al^2)B_\mu  + \frac{10}{3}B_{\mu,xx}+ 10\mu B_\mu^2)w\big]\\
& = \int_\R z \mathcal L_\mu[w].
\end{align*}
Finally, it is clear that $D(\mathcal L_\mu^*)$ can be identified with $D(\mathcal L_\mu)=H^4(\R)$.
\end{proof}

A consequence of the previous result  is the fact that the spectrum of $\mathcal L_\mu$ is real-valued. Furthermore, 
the following Lemma describes the continuous spectrum of $\mathcal L_\mu$.

\begin{lem} Let $\al,\bt, \mu$ as in definition \eqref{GEBre}. 
The operator $\mathcal L_\mu$ is a compact perturbation of the constant coefficients operator
\[
\mathcal L_{0} [z]:= z_{(4x)} -2(\bt^2 -\al^2) z_{xx} +(\al^2 +\bt^2)^2 z. 
\]
In particular, the continuous spectrum of $\mathcal L_\mu$ is the closed interval $[(\al^2 +\bt^2)^2,+\infty)$ in the case $\beta\geq \al$, 
and $[ 4\al^2 \bt^2 ,+\infty)$ in the case $\beta< \al$. No embedded eigenvalues are contained in this region. The eigenvalue zero is isolated.
\end{lem}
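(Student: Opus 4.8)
The plan is to analyze the essential spectrum of $\mathcal L_\mu$ by exploiting the rapid decay of its variable coefficients. First I would verify that all coefficients multiplying the variable-coefficient terms in $\mathcal L_\mu$ — namely the quantities $B_\mu^2$, $B_\mu B_{\mu,x}$, $B_{\mu,x}^2$, $B_\mu B_{\mu,xx}$, $B_\mu^4$, $B_\mu^3$, $\mu B_\mu$, and $\mu B_\mu^2$ — decay exponentially as $|x|\to\infty$, since every Gardner breather and its derivatives lie in the Schwartz class (as established for $B_1,B_2$ and $\Lambda_\al B_\mu,\Lambda_\bt B_\mu$ earlier). Consequently the operator $\mathcal L_\mu-\mathcal L_0$ is a lower-order (at most second-order) differential operator whose coefficients vanish at infinity; by standard relatively-compact perturbation theory (Weyl's essential-spectrum theorem for such Schrödinger-type perturbations), $\mathcal L_\mu$ and $\mathcal L_0$ share the same essential (continuous) spectrum.

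The second step is to compute the continuous spectrum of the constant-coefficient operator $\mathcal L_0[z]=z_{(4x)}-2(\bt^2-\al^2)z_{xx}+(\al^2+\bt^2)^2z$ explicitly via the Fourier transform. Taking $z(x)=e^{i\xi x}$ gives the symbol
\begin{equation*}
p(\xi)=\xi^4+2(\bt^2-\al^2)\xi^2+(\al^2+\bt^2)^2.
\end{equation*}
Writing $s=\xi^2\ge 0$, this is the quadratic $q(s)=s^2+2(\bt^2-\al^2)s+(\al^2+\bt^2)^2$, whose minimum over $s\ge0$ determines the infimum of the spectrum. The vertex sits at $s^*=\al^2-\bt^2$. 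When $\bt\ge\al$ we have $s^*\le0$, so $q$ is increasing on $s\ge0$ and its minimum is $q(0)=(\al^2+\bt^2)^2$, giving the half-line $[(\al^2+\bt^2)^2,+\infty)$. When $\bt<\al$ the vertex $s^*=\al^2-\bt^2>0$ lies in the admissible range, and substituting gives $q(s^*)=(\al^2+\bt^2)^2-(\al^2-\bt^2)^2=4\al^2\bt^2$, producing $[4\al^2\bt^2,+\infty)$. This matches both cases claimed in the statement.

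The third step is to rule out embedded eigenvalues and to confirm that zero is isolated. For the absence of embedded eigenvalues I would argue that any $L^2$-eigenfunction with eigenvalue $\lambda$ in the interior of the essential spectrum would have to match the oscillatory (non-decaying) asymptotic solutions of the associated fourth-order ODE at infinity — since the perturbation decays exponentially, the asymptotic behaviour is governed by $\mathcal L_0-\lambda$, whose characteristic roots are purely imaginary for $\lambda$ in the interior — forcing the solution to fail to be square-integrable unless it is trivial; this is the standard Rellich/Kato-type argument combined with the exponential decay of the coefficients. That zero is an isolated eigenvalue follows because $0$ lies strictly below the bottom of the essential spectrum in both parameter regimes (one checks $(\al^2+\bt^2)^2>0$ and $4\al^2\bt^2>0$ since $\al,\bt\ne0$), so a neighbourhood of $0$ meets the spectrum only in the discrete part.

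The main obstacle I anticipate is the rigorous justification that $\mathcal L_\mu-\mathcal L_0$ is relatively compact with respect to $\mathcal L_0$, because $\mathcal L_\mu-\mathcal L_0$ contains genuine second-order terms ($5B_\mu^2 z_{xx}$, $10\mu B_\mu z_{xx}$) and first-order terms with Schwartz-class coefficients, rather than a mere bounded multiplication operator; one must verify that these, composed with the resolvent $(\mathcal L_0-\lambda)^{-1}$ (which gains four derivatives), yield a compact operator on $L^2(\R)$. The clean way to handle this is to note that each coefficient decays exponentially while $(\mathcal L_0-\lambda)^{-1}$ maps $L^2$ into $H^4$ boundedly, so the composition factors through the compact embedding of weighted Sobolev spaces into $L^2$ afforded by the decaying coefficients (Rellich–Kondrachov together with the vanishing at infinity). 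Carrying this through rigorously — rather than by the heuristic ``compact perturbation'' phrase — is the delicate point, but it is entirely standard for operators with rapidly decaying coefficients, and I would cite the corresponding argument used for the mKdV breather operator in \cite{AM} and the general theory of fourth-order eigenvalue problems in \cite{Gr}.
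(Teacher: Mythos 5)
Your overall strategy --- Weyl's theorem for the exponentially decaying perturbation $\mathcal L_\mu-\mathcal L_0$, the Fourier-symbol computation for $\mathcal L_0$, and the observation that $0$ lies strictly below the bottom of the essential spectrum --- is exactly the route the paper takes (its proof is essentially a three-line invocation of Weyl's theorem plus two assertions). Your symbol computation is correct, your discussion of why the second-order terms $5B_\mu^2 z_{xx}$, $10\mu B_\mu z_{xx}$ are relatively compact is more careful than anything in the paper, and your argument for the isolation of the zero eigenvalue (an eigenvalue strictly below $\min\{(\al^2+\bt^2)^2,4\al^2\bt^2\}>0$ must belong to the discrete spectrum) is cleaner than the paper's appeal to elliptic estimates and nondegeneracy of the kernel.

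However, your argument for the absence of embedded eigenvalues contains a genuine error. You claim that for $\lambda$ in the interior of the essential spectrum the characteristic roots of $\mathcal L_0-\lambda$ are purely imaginary. For a fourth-order operator this is false. The characteristic equation is $r^4-2(\bt^2-\al^2)r^2+(\al^2+\bt^2)^2-\lambda=0$, i.e.\ $\rho^2-2(\bt^2-\al^2)\rho+(\al^2+\bt^2)^2-\lambda=0$ with $\rho=r^2$, whose roots are
\[
\rho_\pm=(\bt^2-\al^2)\pm\sqrt{\lambda-4\al^2\bt^2}.
\]
Whenever $\lambda>(\al^2+\bt^2)^2$ one has $\sqrt{\lambda-4\al^2\bt^2}>|\bt^2-\al^2|$, hence $\rho_+>0>\rho_-$: two characteristic roots $\pm\sqrt{\rho_+}$ are \emph{real}, and only the other two, $\pm i\sqrt{-\rho_-}$, are purely imaginary. (Only in the regime $\bt<\al$ with $4\al^2\bt^2<\lambda<(\al^2+\bt^2)^2$ are all four roots imaginary.) Consequently a putative embedded eigenfunction is not forced to be oscillatory at infinity: it may match the exponentially decaying branch $e^{-\sqrt{\rho_+}|x|}$ at both ends, and the Rellich--Kato oscillation argument you invoke --- which is precisely what rules out embedded eigenvalues for second-order Schr\"odinger operators with decaying potentials --- does not apply. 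Excluding such eigenfunctions requires a different mechanism, e.g.\ showing that the one-dimensional subspaces of solutions decaying at $+\infty$ and at $-\infty$ intersect trivially (a Wronskian/Evans-function analysis), or exploiting the integrable structure. To be fair, the paper is equally laconic here (``a consequence of the rapidly decreasing character of the potentials''), but your write-up replaces that assertion with an argument whose key premise is false, so this step needs to be redone.
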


\begin{proof}
This result is a consequence of the Weyl Theorem on continuous spectrum.  Let us note that  the nonexistence of embedded eigenvalues  
is consequence of the rapidly decreasing character of the potentials involved in the definition of $\mathcal L_\mu$.  

\medskip

The isolatedness of the zero eigenvalue is a direct consequence of standard elliptic estimates
for the eigenvalue problem associated to $\mathcal L_\mu$, corresponding uniform convergence on compact subsets of $\R$, and the 
non degeneracy of the kernel associated to $\mathcal L_\mu$. 

\medskip

\end{proof}

\medskip
Now, remembering the definition \eqref{B12} of the two directions associated to spatial translations $B_1, B_2$,
it is easy to see the following:

\begin{lem}\label{B1B2} For each $t\in \R$, one has
\[
\ker \mathcal L_\mu =\spawn \big\{ B_1(t;x_1,x_2), B_2(t;x_1,x_2)\big\}.
\]
\end{lem}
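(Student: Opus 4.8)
The plan is to prove the two inclusions separately, the containment $\spawn\{B_1,B_2\}\subseteq\ker\mathcal L_\mu$ being elementary and the reverse containment being the substantive part. First I would establish that $B_1$ and $B_2$ annihilate $\mathcal L_\mu$. By Corollary \ref{Cor32} the stationary equation $J_\mu[B_\mu]=0$ of Theorem \ref{GBGE} holds for the breather evaluated at arbitrary shift parameters $x_1,x_2$; differentiating the identity $J_\mu[B_{\al,\bt,\mu}(t,\cdot;x_1,x_2)]=0$ with respect to $x_1$ and to $x_2$ and applying the chain rule yields $DJ_\mu[B_\mu](\partial_{x_i}B_\mu)=0$ for $i=1,2$. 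Since the operator $\mathcal L_\mu$ of \eqref{LGE} is exactly the Fréchet derivative $DJ_\mu[B_\mu]$ (it is the linearization at $B_\mu$ of the functional whose Euler--Lagrange equation is $J_\mu[B_\mu]=0$), this reads $\mathcal L_\mu B_1=\mathcal L_\mu B_2=0$. As already observed, $B_1,B_2$ are Schwartz functions, hence in $H^4(\R)$, and linearly independent in $x$ for each fixed $t$, so $\spawn\{B_1,B_2\}$ is a two-dimensional subspace of $\ker\mathcal L_\mu$.

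For the reverse inclusion it suffices to show $\dim\ker\mathcal L_\mu\le 2$, and here I would exploit the ODE structure, following the strategy of Weinstein \cite{We1} and Maddocks--Sachs \cite{MS} adapted to the breather setting as in \cite{AM}. The equation $\mathcal L_\mu z=0$ is a fourth-order linear ODE with smooth coefficients, so its solution space is four-dimensional; since $\mathcal L_\mu$ carries no third-derivative term, Abel's theorem guarantees that the Wronskian of any four solutions is a (nonzero) constant. As $x\to\pm\infty$ the coefficients decay to those of the constant-coefficient operator $\mathcal L_0$, whose characteristic equation $\lambda^4-2(\bt^2-\al^2)\lambda^2+(\al^2+\bt^2)^2=0$ has roots $\lambda=\pm\sqrt{(\bt^2-\al^2)\pm 2i\al\bt}$, exactly two with positive and two with negative real part. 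Consequently the solutions decaying at $+\infty$ form a two-dimensional space $V_+$, those decaying at $-\infty$ a two-dimensional space $V_-$, and any $H^4$ kernel element lies in $V_+\cap V_-$. Knowing already that $B_1,B_2\in V_+\cap V_-$, the claim is equivalent to $\dim(V_+\cap V_-)=2$; for the upper bound I would construct the two remaining fundamental solutions by reduction of order from $B_1,B_2$, using the constancy of the Wronskian together with the explicit Wronskian-type identity \eqref{Idwrons} of Lemma \ref{Id1}, which evaluates $\int_{-\infty}^x(\tilde B_{12}^2-\tilde B_{11}\tilde B_{22})$ in closed form. This evaluation exhibits the two extra solutions as growing at one of the endpoints, so they cannot belong to $V_+\cap V_-$, forcing $\dim(V_+\cap V_-)\le 2$ and hence the equality.

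The hard part will be the upper bound $\dim\ker\mathcal L_\mu\le 2$: one must rule out growing solutions entering the kernel, and the delicate point is organizing the reduction-of-order computation so that the non-$L^2$ behavior of the two complementary fundamental solutions is transparent. This is precisely where the identity \eqref{Idwrons} is indispensable, since it replaces the intractable direct integration of Wronskians built from the cumbersome breather profile \eqref{GEBre} by a manageable expression in $B_\mu$, $\tilde B_{11}$ and $\partial_{x_1}^2\partial_x\log(G_\mu^2+F_\mu^2)$; see Greenberg \cite{Gr} for the background theory of fourth-order eigenvalue problems. Isolatedness of the eigenvalue zero, established in the preceding lemma, then ensures that $\ker\mathcal L_\mu$ is genuinely the geometric kernel and completes the characterization.
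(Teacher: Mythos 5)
Your proof is correct and its core coincides with the paper's: one first differentiates the stationary identity $J_\mu[B_\mu]=0$ (Theorem \ref{GBGE}, Corollary \ref{Cor32}) in $x_1$ and $x_2$ to obtain $\mathcal L_\mu[B_1]=\mathcal L_\mu[B_2]=0$, and then bounds the kernel dimension via the constant-coefficient operator $\mathcal L_0$, whose four asymptotic modes $e^{\pm \bt x}\cos(\al x)$, $e^{\pm\bt x}\sin(\al x)$ contain only two that are square-integrable on $[0,+\infty)$.

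One remark, though: the part you flag as ``the hard part'' is in fact unnecessary, and you have mislocated the difficulty. Once you know that the space $V_+$ of solutions of $\mathcal L_\mu z=0$ decaying at $+\infty$ is two-dimensional, every $H^4(\R)$ kernel element lies in $V_+$, so $\dim\ker\mathcal L_\mu\le\dim V_+=2$ at once; since $B_1,B_2$ are linearly independent kernel elements, equality holds and $V_+=V_-=\spawn\{B_1,B_2\}$. There is no need to compute $\dim(V_+\cap V_-)$ by reduction of order, no need for the identity \eqref{Idwrons}, and no need for isolatedness of the zero eigenvalue. In the paper, \eqref{Idwrons} plays no role in this lemma at all: it enters later, in Lemma \ref{WMLe} and Proposition \ref{WW}, to evaluate the Wronskian $\det W[B_1,B_2]$ for the Greenberg--Maddocks--Sachs count of \emph{negative} eigenvalues (Lemma \ref{Wr1}), which is a different statement. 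So your argument is sound, but its last paragraph should simply be deleted.
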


\begin{proof}
From Theorem \ref{GBGE}, one has that $\partial_{x_1}J_\mu[B_\mu] =\partial_{x_2}J_\mu[B_\mu] \equiv 0$. Writing down these identities, we obtain
\be\label{LB12}
\mathcal L_\mu [B_1](t;x_1,x_2) = \mathcal L_\mu [B_{2}](t;x_1,x_2) =0,
\ee
with $\mathcal L_\mu$ the linearized operator defined in \eqref{LGE} and $B_1, B_2$ defined in \eqref{B12}. A direct analysis involving ordinary differential 
equations shows that the null space of $\mathcal L_{0}$ is spawned by functions of the type
\[
e^{\pm \bt x } \cos(\al x), \quad e^{\pm \bt x } \sin(\al x),Ê\quad \al,\bt>0,
\]
(note that this set is linearly independent). Among these four functions, there are only two $L^2$-integrable ones in 
the semi-infinite line $[0,+\infty)$. Therefore, the null space of $\mathcal L_\mu|_{H^4(\R)}$ is spanned by at most two $L^2$-functions. Finally, 
comparing with (\ref{LB12}), we have the desired conclusion. 
\end{proof}

\medskip

We consider now the natural modes associated to the scaling parameters, which are the best candidates to generate negative directions 
for the related quadratic form defined by $\mathcal L_\mu$. Recall the definitions of $\Lambda_\al B_{\mu} $ and $\Lambda_\beta B_{\mu} $ 
introduced in (\ref{LAB}). For these two directions, one has the following
\begin{lem}\label{Scaling} 
Let $B_\mu =B_{\al,\bt,\mu}$ be any Gardner breather. Consider the scaling directions $\Lambda_\al B_\mu$ and $\Lambda_\bt B_\mu$ introduced in \eqref{LAB}. 
Then, given $\al,\bt>0$ and $\forall\mu\in(0,\mu_{max})$, we have

\be\label{Pos}
\int_\R  \Lambda_\al B_\mu \, \mathcal L_\mu [\Lambda_\al B_\mu]  = 
32 \al^2\bt\Big[1+\frac{2\mu^2\Delta}{\Delta^2+8\mu^2\bt^2}\Big]>0,
\ee
and
\medskip
\be\label{Neg}
\int_\R  \Lambda_\bt B_\mu\, \mathcal L_\mu [\Lambda_\bt B_\mu]  =  
-16 \bt\Big[(\al^2-\bt^2) + (\al^2+\bt^2 +2\mu^2)\Big(\frac{\Delta^2 +2\mu^2\Delta +4\mu^2\bt^2}{\Delta^2+8\mu^2\bt^2}
\Big)\Big]<0.
\ee

\end{lem}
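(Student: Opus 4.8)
The plan is to compute each of the two quadratic-form evaluations $\int_\R \Lambda_\al B_\mu\,\mathcal L_\mu[\Lambda_\al B_\mu]$ and $\int_\R \Lambda_\bt B_\mu\,\mathcal L_\mu[\Lambda_\bt B_\mu]$ by reducing them to derivatives of the already-computed conserved quantities, rather than by attempting to integrate the explicit (very cumbersome) breather profile \eqref{GEBre}. The key observation is that $\mathcal L_\mu$ is the Hessian of the Lyapunov functional $\mathcal H_\mu$ \eqref{LyapunovGE} at $B_\mu$, and $B_\mu$ is a critical point (Lemma \ref{crit}, with $J_\mu[B_\mu]\equiv 0$ by Theorem \ref{GBGE}). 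Differentiating the critical-point identity $J_\mu[B_\mu]=0$ with respect to a scaling parameter $\tau\in\{\al,\bt\}$ produces, by the chain rule, an inhomogeneous equation of the form $\mathcal L_\mu[\Lambda_\tau B_\mu] = -\partial_\tau\!\big(J_\mu[B_\mu]\big)\big|_{\text{expl.}\ \tau\text{-dependence}}$, where the right-hand side is precisely the explicit (non-$B_\mu$) $\tau$-dependence of the coefficients in $J_\mu$, namely the terms carrying the factors $2(\bt^2-\al^2)$ and $(\al^2+\bt^2)^2$. This is the standard Lax/Maddocks–Sachs/Weinstein device already invoked in the section preamble, and it converts the apparently intractable quartic-order integral into a combination of $\partial_\tau M[B_\mu]$ and $\partial_\tau E_\mu[B_\mu]$.

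Concretely, I would first differentiate $J_\mu[B_\mu]=0$ in $\al$ and in $\bt$. Reading off the explicit parameter dependence of \eqref{EcBGEfinal}, the only coefficients with bare (non-breather) $\al,\bt$ are $-2(\bt^2-\al^2)$ multiplying $(B_{\mu,xx}+3\mu B_\mu^2+B_\mu^3)$ and $(\al^2+\bt^2)^2$ multiplying $B_\mu$. Using the second-order identity \eqref{2ndga}, $B_{\mu,xx}+3\mu B_\mu^2+B_\mu^3=-\tilde B_{\mu,t}$, the $\al$- and $\bt$-derivatives give
\begin{align*}
\mathcal L_\mu[\Lambda_\al B_\mu] &= -4\al\,\tilde B_{\mu,t} - 4\al(\al^2+\bt^2)\,B_\mu,\\
\mathcal L_\mu[\Lambda_\bt B_\mu] &= \ \ 4\bt\,\tilde B_{\mu,t} - 4\bt(\al^2+\bt^2)\,B_\mu.
\end{align*}
Pairing each with $\Lambda_\al B_\mu$, respectively $\Lambda_\bt B_\mu$, and integrating in $x$, the term proportional to $(\al^2+\bt^2)$ yields $\int_\R B_\mu\,\Lambda_\tau B_\mu=\partial_\tau M[B_\mu]$, which is known explicitly from Corollary \ref{WCcor} (formulas \eqref{LAB1a}, \eqref{LAB2b}). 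The remaining term $\int_\R \tilde B_{\mu,t}\,\Lambda_\tau B_\mu$ must be rewritten as a parameter-derivative of the energy: integrating by parts in $x$ (moving the $\partial_x$ hidden in $B_\mu=\tilde B_{\mu,x}$) and using the energy reduction \eqref{red2} together with \eqref{2ndga}, this should collapse to a linear combination of $\partial_\tau E_\mu[B_\mu]$ and $\partial_\tau M[B_\mu]$, both available from Corollary \ref{WCcor2} and Corollary \ref{WCcor}. Substituting the explicit rational expressions and simplifying then produces the two closed forms on the right-hand sides of \eqref{Pos} and \eqref{Neg}; the sign assertions follow by inspection since $\al,\bt>0$, $\Delta>0$, and $\mu\in(0,\mu_{max})$ make every bracketed factor manifestly positive.

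The main obstacle I anticipate is the bookkeeping in the cross term $\int_\R \tilde B_{\mu,t}\,\Lambda_\tau B_\mu$: because $\tilde B_\mu$ is an $L^\infty$ (not $L^2$) antiderivative and $\Lambda_\tau B_\mu$ decays, one must handle the boundary contributions at $\pm\infty$ carefully and verify that the time-dependent quantities $\tilde B_{\mu,t}$ and $(\mathcal M_\mu)_t$ recombine into the \emph{time-independent} parameter derivatives of $M$ and $E_\mu$ (the final answers must be constant in $t$, as the statement asserts). I expect the cleanest route is to avoid integrating by hand altogether: evaluate the full pairing as $\partial_\tau^2$ of $\mathcal H_\mu[B_\mu]$ corrected by the non-quadratic dependence, i.e. use that $\int_\R \Lambda_\tau B_\mu\,\mathcal L_\mu[\Lambda_\tau B_\mu]$ equals the second variation of $\mathcal H_\mu$ in the $\tau$-direction minus the contribution of the $\tau$-derivatives acting on $\mathcal L_\mu$'s explicit coefficients, and then read everything off the explicit formula \eqref{LyapunovBreG0} for $\mathcal H_\mu[B_\mu]$ from Lemma \ref{LyaBG} together with \eqref{MassBG}, \eqref{EnergyB}. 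Either way, once the reduction to $\partial_\tau M$ and $\partial_\tau E_\mu$ is secured the remaining work is purely algebraic simplification of rational functions of $\al,\bt,\mu,\Delta$, which matches \eqref{Pos}–\eqref{Neg}.
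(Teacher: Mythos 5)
Your overall strategy is exactly the paper's: differentiate the stationary equation $J_\mu[B_\mu]=0$ of Theorem \ref{GBGE} in the scaling parameters to obtain inhomogeneous equations for $\mathcal L_\mu[\Lambda_\al B_\mu]$ and $\mathcal L_\mu[\Lambda_\bt B_\mu]$, pair with the scaling directions, identify the resulting integrals as $\partial_\tau M[B_\mu]$ and $\partial_\tau E_\mu[B_\mu]$ (for $\tau\in\{\al,\bt\}$), and substitute the explicit formulas of Corollaries \ref{WCcor} and \ref{WCcor2}. However, both of your displayed formulas carry a sign error in the $\tilde B_{\mu,t}$ term, and it is not benign. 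Differentiating the coefficient $-2(\bt^2-\al^2)$ in \eqref{EcBGEfinal} with respect to $\al$ gives $+4\al$, so the explicit-coefficient contribution is $+4\al(B_{\mu,xx}+3\mu B_\mu^2+B_\mu^3)$, which equals $-4\al\tilde B_{\mu,t}$ by \eqref{2ndga}; hence $\mathcal L_\mu[\Lambda_\al B_\mu] = +4\al\,\tilde B_{\mu,t} - 4\al(\al^2+\bt^2)B_\mu$, and similarly $\mathcal L_\mu[\Lambda_\bt B_\mu] = -4\bt\,\tilde B_{\mu,t} - 4\bt(\al^2+\bt^2)B_\mu$. You have the opposite signs on the $\tilde B_{\mu,t}$ terms. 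Since $\int_\R \tilde B_{\mu,t}\,\Lambda_\tau B_\mu = -\int_\R (B_{\mu,xx}+3\mu B_\mu^2+B_\mu^3)\Lambda_\tau B_\mu = \partial_\tau E_\mu[B_\mu]$, your versions lead to $\int_\R \Lambda_\al B_\mu\,\mathcal L_\mu[\Lambda_\al B_\mu] = -4\al\,\partial_\al E_\mu[B_\mu] - 4\al(\al^2+\bt^2)\partial_\al M[B_\mu]$, whose value as $\mu\to 0^+$ is $-32\al^2\bt<0$, contradicting \eqref{Pos}, which tends to $+32\al^2\bt$. The correct combinations are $+4\al\,\partial_\al E_\mu[B_\mu] - 4\al(\al^2+\bt^2)\partial_\al M[B_\mu]$ and $-4\bt\,\partial_\bt E_\mu[B_\mu] - 4\bt(\al^2+\bt^2)\partial_\bt M[B_\mu]$, which reproduce \eqref{Pos} and \eqref{Neg} after substituting \eqref{LAB1a}, \eqref{LAB2b} and \eqref{LAB3}; this is precisely the paper's computation.

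A second, smaller point: the ``main obstacle'' you anticipate --- boundary contributions from the $L^\infty$ antiderivative and the need to recombine time-dependent quantities into time-independent ones --- does not arise. By Lemma \ref{Id1}, $\tilde B_{\mu,t}$ is Schwartz in $x$ for each fixed $t$, and the cross term is \emph{exactly} $\partial_\tau E_\mu[B_\mu]$ with no leftover $\partial_\tau M$ piece and no boundary terms: the combination $-(B_{\mu,xx}+3\mu B_\mu^2+B_\mu^3)$ is the $L^2$-gradient of $E_\mu$, so pairing it with $\Lambda_\tau B_\mu$ is just the chain rule applied to $\tau\mapsto E_\mu[B_{\al,\bt,\mu}]$. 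In particular, the detour through $\tilde B_{\mu,t}$, and the alternative second-variation computation of $\mathcal H_\mu[B_\mu]$ you sketch at the end, are unnecessary; once the two differentiated identities are written with the correct signs, the proof reduces to a few lines of substitution of the already-computed mass and energy derivatives.
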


\begin{proof}
 From \eqref{EcBGEfinal}, we get after derivation with respect to $\al$ and $\beta$,
\[
\mathcal L_\mu [\Lambda_\al B_\mu ] =  -4\al [ B_{\mu,xx} + B_\mu^3 + 3\mu B^2_\mu+(\al^2+\bt^2)B_\mu],\]
\[
\mathcal L_\mu [\Lambda_\bt B_\mu]  =  4\bt [ B_{\mu,xx} + B_\mu^3 + 3\mu B^2_\mu- (\al^2+\bt^2)B_\mu].
\]
We deal with the first identity \eqref{Pos}. Note that from (\ref{LAB1a}), (\ref{E2}) and (\ref{LAB3}),

\[
\begin{aligned}
&\int_\R  \Lambda_\al B_\mu\, \mathcal L_\mu [\Lambda_\al B_\mu] =  -4\al  \int_\R [  B_{\mu,xx} + B_\mu^3 + 3\mu B^2_\mu+(\al^2+\bt^2)B_\mu] \Lambda_\al B_\mu \nonu \\
&=  4\al \partial_\al E_\mu[B_\mu] - 4\al(\al^2+\bt^2)\partial_\al M[B_\mu]\nonu\\
&=4\al\cdot8\al\bt(1-\frac{4\mu^4}{\Delta^2+8\mu^2\bt^2})  - 4\al(\al^2+\bt^2)(-\frac{16\mu^2\bt\al}{\Delta^2+8\mu^2\bt^2})\nonu\\
&=32 \al^2\bt\Big[1-\frac{4\mu^4}{\Delta^2+8\mu^2\bt^2} + \frac{2\mu^2(\al^2+\bt^2)}{\Delta^2+8\mu^2\bt^2}\Big]=\eqref{Pos}.
\end{aligned}
\]

\medskip
\noindent
Following a similar analysis, we have
\begin{align*}
&\int_\R  \Lambda_\bt B_\mu \, \mathcal L_\mu [\Lambda_\bt B_\mu]  =  4\bt  \int_\R [ B_{\mu,xx} + B_\mu^3 + 3\mu B^2_\mu+(\al^2+\bt^2)B_\mu] \Lambda_\bt B_\mu  \\
&= -4\bt \partial_\bt E_\mu[B_\mu] - 4\bt (\al^2 +\bt^2)\partial_\bt M[B_\mu]\nonu \\
&= -4\bt[4(\al^2 -\bt^2) + 8\mu^2(1 + \frac{2\mu^2(\Delta - 2\bt^2)}{\Delta^2+8\mu^2\bt^2})] - 4\bt(\al^2 +\bt^2)4(1+\frac{2\mu^2(\Delta 
- 2\bt^2)}{\Delta^2+8\mu^2\bt^2})\nonu\\
&= -4\bt[4(\al^2-\bt^2) +4\frac{(\al^2+\bt^2+2\mu^2)(\Delta^2+2\mu^2\Delta+4\mu^2\bt^2)}{\Delta^2+8\mu^2\bt^2})]=\eqref{Neg}.
\end{align*}
\end{proof}

\ms 
\noindent
A direct consequence of the previous identities  and Corollary \ref{WCcor}, is the following:  

\begin{cor}\label{B_0per}
With the notation of Lemma \ref{Scaling}  let

\be\label{B0}
B_{0,\mu} := \frac{\al\Lambda_\bt B_\mu + \bt\Lambda_\al B_\mu}{8\al\bt(\al^2+\bt^2)}.\\
\ee
\ms

Then $B_{0,\mu}$ is Schwartz, satisfies $\mathcal L_\mu[B_{0,\mu}] = - B_\mu$ and $\forall \mu\in(0,\mu_{max})$

\be\label{negB0}
\int_\R B_{0,\mu} B_\mu = 
\frac{1}{2\bt(\al^2+\bt^2)}\Big(\frac{\Delta^2+2\mu^2\Delta}{\Delta^2+8\mu^2\bt^2}\Big)>0.
\ee
\quad  


Moreover, 
\bea\label{dirNegPos1}
&\frac{1}{2}\int_\R B_{0,\mu}\mathcal L_\mu[B_{0,\mu}]  < 0.
\eea
\end{cor}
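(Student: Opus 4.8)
The plan is to reduce the entire statement to the two identities for $\mathcal L_\mu[\Lambda_\al B_\mu]$ and $\mathcal L_\mu[\Lambda_\bt B_\mu]$ already established inside the proof of Lemma~\ref{Scaling}, together with the mass derivatives computed in Corollary~\ref{WCcor}. Abbreviating $P := B_{\mu,xx} + B_\mu^3 + 3\mu B_\mu^2$, those identities read $\mathcal L_\mu[\Lambda_\al B_\mu] = -4\al\,[\,P + (\al^2+\bt^2)B_\mu\,]$ and $\mathcal L_\mu[\Lambda_\bt B_\mu] = 4\bt\,[\,P - (\al^2+\bt^2)B_\mu\,]$. The whole point of the definition of $B_{0,\mu}$ is that this particular combination is engineered so that the $P$-contributions cancel.

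First, I note that $B_{0,\mu}$ is Schwartz, being a constant-coefficient linear combination of $\Lambda_\al B_\mu$ and $\Lambda_\bt B_\mu$, both of which are Schwartz by Corollary~\ref{WCcor}. Next, by linearity of $\mathcal L_\mu$ one has $\mathcal L_\mu[B_{0,\mu}] = \frac{\al\,\mathcal L_\mu[\Lambda_\bt B_\mu] + \bt\,\mathcal L_\mu[\Lambda_\al B_\mu]}{8\al\bt(\al^2+\bt^2)}$; substituting the two identities above, the terms proportional to $P$ cancel while those proportional to $B_\mu$ add, producing numerator $-8\al\bt(\al^2+\bt^2)B_\mu$, hence $\mathcal L_\mu[B_{0,\mu}] = -B_\mu$.

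For the scalar product I would expand $\int_\R B_{0,\mu} B_\mu$ directly from the definition and recognize the two integrals $\int_\R B_\mu \Lambda_\bt B_\mu = \partial_\bt M[B_\mu]$ and $\int_\R B_\mu \Lambda_\al B_\mu = \partial_\al M[B_\mu]$, whose explicit values are supplied by \eqref{LAB1a}--\eqref{LAB2b}. Clearing the common denominator $\Delta^2 + 8\mu^2\bt^2$, the term $4\mu^2\bt^2$ arising from $\al\,\partial_\bt M$ exactly annihilates the $16\mu^2\bt^2\al$ contribution from $\bt\,\partial_\al M$, and one is left with $\int_\R B_{0,\mu} B_\mu = \frac{1}{2\bt(\al^2+\bt^2)}\big(\tfrac{\Delta^2+2\mu^2\Delta}{\Delta^2+8\mu^2\bt^2}\big)$. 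Positivity is then immediate: since $\Delta = \al^2+\bt^2-2\mu^2 > 0$ in the admissible range $\mu\in(0,\mu_{max})$, the numerator factors as $\Delta(\Delta+2\mu^2)>0$ while the denominator is manifestly positive.

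Finally, the last inequality \eqref{dirNegPos1} requires no further computation: inserting $\mathcal L_\mu[B_{0,\mu}] = -B_\mu$ from the second step gives $\frac12\int_\R B_{0,\mu}\mathcal L_\mu[B_{0,\mu}] = -\frac12\int_\R B_{0,\mu} B_\mu$, which is strictly negative by the positivity just obtained. There is no serious obstacle here; the only place demanding genuine care is the algebraic cancellation in the third step, where one must check that the $4\mu^2\bt^2$ piece carried by $\partial_\bt M$ precisely cancels against the contribution of $\bt\,\partial_\al M$, leaving the clean factor $\Delta^2+2\mu^2\Delta$.
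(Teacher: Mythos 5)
Your proposal is correct and follows essentially the same route as the paper: the identity $\mathcal L_\mu[B_{0,\mu}]=-B_\mu$ comes from the cancellation of the $B_{\mu,xx}+B_\mu^3+3\mu B_\mu^2$ terms in the two identities established in Lemma \ref{Scaling}, the scalar product \eqref{negB0} is evaluated via $\al\,\partial_\bt M[B_\mu]+\bt\,\partial_\al M[B_\mu]$ using \eqref{LAB1a}--\eqref{LAB2b} (your cancellation of the $4\mu^2\bt^2$ piece against $\bt\,\partial_\al M$ checks out, leaving $\Delta(\Delta+2\mu^2)>0$), and \eqref{dirNegPos1} then follows by substituting $\mathcal L_\mu[B_{0,\mu}]=-B_\mu$. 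This matches the paper's own derivation, which likewise treats the corollary as a direct consequence of Lemma \ref{Scaling} and Corollary \ref{WCcor}.
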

\medskip

\begin{rem}
In other words, from Definition \ref{Neg_dir}, we can see $B_{0,\mu}$ as a negative direction of $\mathcal{Q}_\mu$
 $\forall\mu\in(0,\mu_{max})$. Besides that,  $B_{0,\mu}$  is not orthogonal to the breather itself.  
Note additionally that  constants involved in (\ref{negB0}) are independent of time.

\end{rem}


\begin{proof}
Using \eqref{B0}, we are lead to understanding the sign of the function 

\be\label{HG}
\begin{aligned}
\int_\R B_{0,\mu} B_\mu & =  \frac{1}{8\al\bt(\al^2+\bt^2)} \int_\R (\al\Lambda_\bt B_\mu + \bt\Lambda_\al B_\mu) B_\mu \\
& = \frac{1}{8\al\bt(\al^2+\bt^2)}(\al\partial_\bt M[B_\mu] + \bt\partial_\al M[B_\mu])\\
& =\frac{1}{8\al\bt(\al^2+\bt^2)}\Big(4\al[1+\frac{2\mu^2(\Delta-2\bt^2)}{\Delta^2+8\mu^2\bt^2}]  -  
\frac{16\mu^2\al\bt^2}{\Delta^2+8\mu^2\bt^2}\Big)=\eqref{negB0},
\end{aligned}
\ee
where $\partial_\bt M[B_\mu]$ was computed in \eqref{LAB2b}. 


\end{proof}





Now, in order to prove that $\mathcal L_\mu$ possesses, 
for all time, \emph{only one negative eigenvalue}, we follow the Greenberg 
and Maddocks-Sachs strategy \cite{Gr,MS}, applied this time to the linear, \emph{oscillatory} operator $\mathcal L_\mu$. 
More specifically, we will use the following

\begin{lem}[Uniqueness criterium, see also \cite{Gr,MS}]\label{Wr1} Let $B=B_{\mu}$ be any Gardner breather, and $B_1,B_2$ 
the corresponding kernel of the operator $\mathcal L_\mu$. Then $\mathcal L_\mu$ has 
\[
\sum_{x\in \R} \dim \ker W[B_1, B_2] (t;x)
\] 
negative eigenvalues, counting multiplicity. Here, $W$ is the Wronskian matrix of the functions $B_1$ and $B_2$, 
\be\label{WM}
W[B_1, B_2] (t;x) := \left[ \begin{array}{cc} B_1 & B_2 \\  (B_1)_x & (B_2)_x  \end{array} \right] (t,x).
\ee
\end{lem}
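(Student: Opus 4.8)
The plan is to count negative eigenvalues of the fourth-order self-adjoint operator $\mathcal L_\mu$ by relating oscillation/nodal data of its kernel to the Morse index, following the Greenberg and Maddocks–Sachs framework for self-adjoint eigenvalue problems of order four. The essential idea is that, for such operators on $\R$ with rapidly decaying potentials, the negative eigenvalues are governed by the degeneracies of the Wronskian matrix $W[B_1,B_2]$ built from a basis $\{B_1,B_2\}$ of $\ker\mathcal L_\mu$. First I would record the relevant structural facts already available: $\mathcal L_\mu$ is self-adjoint on $H^4(\R)$ with dense domain (established earlier), its essential spectrum is a half-line bounded below (so the negative spectrum consists of finitely many eigenvalues of finite multiplicity), and $\ker\mathcal L_\mu=\spawn\{B_1,B_2\}$ with $B_1,B_2$ Schwartz and linearly independent as functions of $x$ (Lemma \ref{B1B2}). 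These facts guarantee that the counting problem is well posed and that the objects $B_1,B_2$ entering $W$ are exactly the two $L^2$ solutions of $\mathcal L_\mu u=0$.

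Next I would set up the Greenberg–Maddocks–Sachs machinery. The key is to view $\mathcal L_\mu u=\lambda u$ as a formally self-adjoint fourth-order Sturm–Liouville problem and to track, as $\lambda$ increases from $-\infty$ up to $0$, how the solution space decaying at $-\infty$ intersects the solution space decaying at $+\infty$. At $\lambda=0$ this intersection is precisely $\ker\mathcal L_\mu=\spawn\{B_1,B_2\}$, and the dimension of the intersection is detected pointwise by the rank drop of the Wronskian matrix $W[B_1,B_2](t;x)$: where $\det W=0$, the two kernel solutions become linearly dependent in a first-order sense, signalling a crossing. By the oscillation theory for such operators (the analogue of the Sturm oscillation theorem adapted to the fourth-order, \emph{oscillatory} operator $\mathcal L_\mu$), each such degeneracy contributes one negative eigenvalue, counted with multiplicity equal to $\dim\ker W$ at that point. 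Summing over $x\in\R$ then yields the stated count $\sum_{x\in\R}\dim\ker W[B_1,B_2](t;x)$.

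Concretely, I would proceed as follows. I would first reduce $\mathcal L_\mu u=\lambda u$ to a first-order Hamiltonian system in the phase variable $(u,u_x,u_{xx},u_{xxx})$, identify the stable/unstable subspaces at $\pm\infty$ determined by the constant-coefficient limiting operator $\mathcal L_0$, and form the corresponding Lagrangian (or isotropic) subspaces whose intersection number equals the Morse index. I would then invoke the comparison/monotonicity principle: the relevant intersection dimension is nondecreasing in $\lambda$ and jumps exactly at eigenvalues, so the total number of negative eigenvalues equals the number of conjugate points accumulated up to $\lambda=0$. Finally I would translate conjugate points at $\lambda=0$ into rank-deficiency of $W[B_1,B_2]$, using that $B_1,B_2$ span the $L^2$ kernel and that a conjugate point is exactly a value of $x$ where these two solutions and their first derivatives fail to be independent, i.e. $\dim\ker W(t;x)>0$.

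The main obstacle I anticipate is establishing the oscillation count rigorously for this \emph{fourth-order, oscillatory} operator rather than for a classical second-order Sturm–Liouville problem: the sign structure of $\mathcal L_\mu$ is not definite, the potentials are time-dependent through the breather profile, and one must verify that the hypotheses of the Greenberg–Maddocks–Sachs theorem (self-adjointness, decay, disconjugacy-type conditions at the endpoints, and the absence of embedded or non-isolated eigenvalues) genuinely hold in this setting. I would handle this by citing the general results of Greenberg \cite{Gr} and Maddocks–Sachs \cite{MS}, checking that the needed structural hypotheses follow from the facts already proved (self-adjointness, essential spectrum a half-line, isolated and non-degenerate zero eigenvalue, Schwartz kernel), and noting that the time variable $t$ enters only as a fixed parameter, so the count is valid for each $t\in\R$. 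A secondary subtlety is confirming that the Wronskian degeneracies are isolated in $x$ (so that the sum over $x\in\R$ is finite), which follows from the analyticity/real-analytic dependence of the Schwartz kernel functions $B_1,B_2$ on $x$ together with their linear independence.
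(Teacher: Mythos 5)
Your proposal is correct and follows essentially the same route as the paper: the paper's proof simply invokes Greenberg's finite-interval oscillation theorem \cite[Theorem 2.2]{Gr} together with the standard extension to the real line as in \cite{MS,HPZ}, after noting the structural facts (self-adjointness, Schwartz kernel spanned by $B_1,B_2$, essential spectrum a half-line, $t$ entering only as a parameter) --- exactly the hypotheses you check. Your additional sketch of the internal mechanism of the cited theorem (Hamiltonian reduction, conjugate points, monotonicity in $\lambda$) is a faithful expansion of what the citation hides, not a different argument.
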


\begin{proof}
This result is essentially contained in \cite[Theorem 2.2]{Gr}, where the finite interval case was considered. 
As shown in several articles (see e.g. \cite{MS,HPZ}), the extension to the real line is direct and does not require 
additional efforts. We skip the details.
\end{proof}

In what follows, we compute the Wronskian (\ref{WM}). 
The surprising fact is the following greatly simplified new expression for the determinant of (\ref{WM}) and which generalizes the Wronskian for
the mKdV's breather case ($\mu=0$) (see \cite[Lemma 4.7]{AM}):

\begin{lem}\label{WMLe}
Let $B_\mu=B_{\al,\bt,\mu}$ be any Gardner breather, $B_1, B_2$ the corresponding kernel elements defined in \eqref{B12} and
$D_\mu = F_\mu^2 + G_\mu^2$.  Then

\be\label{Wsimpl}
\begin{aligned}
&\det W[B_1,B_2](t;x):=\frac{4\bt^3(\al^2+\bt^2)^2((\al^2+\bt^2)^2-4\mu^2(\al^2-\mu^2))}{\Delta^3 D_\mu^2}\\
&\Big[
\sinh(2\bt y_2) + \frac{4\bt^2\mu^2\cosh(2\bt y_2)}{(\al^2+\bt^2)^2-4\mu^2(\al^2-\mu^2)}
- \frac{\bt\Delta((\al^2+\bt^2)^2 - 2\mu^2(\al^2-\bt^2))\sin(2\al y_1)}{\al(\al^2+\bt^2)((\al^2+\bt^2)^2 - 4\mu^2(\al^2-\mu^2))}\\
&+\frac{4\bt^2\mu^2\Delta\cos(2\al y_1)}{(\al^2+\bt^2)((\al^2+\bt^2)^2 - 4\mu^2(\al^2-\mu^2))}\Big].\\
\end{aligned}
\ee
\end{lem}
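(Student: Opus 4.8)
The plan is to avoid expanding the full $2\times2$ determinant of four functions and instead to reduce it, using the kernel structure, to the Wronskian of only two explicit fields. Since $B_1,B_2\in\ker\mathcal L_\mu$ by Lemma \ref{B1B2}, and, as recorded in the proof of Lemma \ref{Id1}, $B_1+B_2=\partial_x(\tilde B_1+\tilde B_2)=B_{\mu,x}$, the change of basis $(B_1,B_2)\mapsto(B_1,B_1+B_2)$ is unimodular and the Wronskian is invariant under it. Hence
\[
\det W[B_1,B_2]=W[B_1,B_{\mu,x}]=B_1\,B_{\mu,xx}-B_{\mu,x}\,(B_1)_x,
\]
which replaces the four fields $B_1,B_2,(B_1)_x,(B_2)_x$ by the three explicit ones $B_1,B_{\mu,x},B_{\mu,xx}$, together with $(B_1)_x=\partial_{x_1}B_{\mu,x}$. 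This is essentially the identity alluded to in the introduction that simplifies the Wronskian computation.

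First I would insert the explicit profile. From Lemma \ref{Id1}, $B_\mu=\tilde B_{\mu,x}=2\sqrt2\,\partial_x\arctan(G_\mu/F_\mu)$, so
\[
B_\mu=2\sqrt2\,\frac{F_\mu G_{\mu,x}-G_\mu F_{\mu,x}}{D_\mu},\qquad D_\mu=F_\mu^2+G_\mu^2 .
\]
Because $x_1$ enters $F_\mu,G_\mu$ only through $y_1$ and $x_2$ only through $y_2$, the operators $\partial_{x_1}=\partial_{y_1}$, $\partial_{x_2}=\partial_{y_2}$ and $\partial_x=\partial_{y_1}+\partial_{y_2}$ act purely algebraically once $F_\mu,G_\mu$ are written in the exponentials $e^{\pm\bt y_2}$ and $e^{\pm i\al y_1}$. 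I would then compute $B_1$, $B_{\mu,x}$, $B_{\mu,xx}$ and $(B_1)_x$ as rational functions whose denominators are powers of $D_\mu$ (a routine but lengthy step), and assemble the reduced determinant above.

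The hard part will be the algebraic collapse. A naive power count gives $B_1 B_{\mu,xx}$ and $B_{\mu,x}(B_1)_x$ each with denominator $D_\mu^5$, whereas the claimed formula has denominator $D_\mu^2$; thus the numerator of $\det W$ must be divisible by $D_\mu^3$. To force and certify this divisibility I would repeatedly use the bilinear identity \eqref{Id_B2}, $B_\mu^2=2\,\partial_x^2\log D_\mu-2\mu B_\mu$, together with \eqref{2ndga} and the Wronskian identity \eqref{Idwrons}, all of which convert products of the breather and its derivatives into $x$-derivatives of $\log D_\mu$ and so lower the effective power of $D_\mu$. Once the denominator is reduced to $D_\mu^2$, the numerator is a second-harmonic trigonometric--hyperbolic polynomial, and a final bookkeeping step collects the exponentials into $\sinh(2\bt y_2),\cosh(2\bt y_2),\sin(2\al y_1),\cos(2\al y_1)$, producing the bracket together with the scalar prefactor $\tfrac{4\bt^3(\al^2+\bt^2)^2((\al^2+\bt^2)^2-4\mu^2(\al^2-\mu^2))}{\Delta^3D_\mu^2}$. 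As consistency checks I would verify that the $\mu\to0$ limit reproduces \cite[Lemma 4.7]{AM} and that the scalar prefactor remains finite and nonzero for all $\mu\in(0,\mu_{max})$.
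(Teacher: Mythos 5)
Your opening reduction is correct: since $B_\mu$ depends on $(x,x_1,x_2)$ only through $y_1,y_2$, one indeed has $B_1+B_2=B_{\mu,x}$, and bilinearity plus antisymmetry of the determinant give $\det W[B_1,B_2]=\det W[B_1,B_{\mu,x}]=B_1B_{\mu,xx}-B_{\mu,x}(B_1)_x$. The gap is everything after that. The collapse from a denominator-$D_\mu^5$ rational expression to the denominator-$D_\mu^2$ formula \eqref{Wsimpl} \emph{is} the content of the lemma, and your proposal delegates it to ``repeatedly using'' \eqref{Id_B2}, \eqref{2ndga} and \eqref{Idwrons} without exhibiting any mechanism. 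As written this cannot work for the identity you lean on most: \eqref{Idwrons} is an \emph{integral} identity for $\int_{-\infty}^x(\tilde B_{12}^2-\tilde B_{11}\tilde B_{22})$, and it says nothing about your pointwise expression $B_1B_{\mu,xx}-B_{\mu,x}(B_1)_x$ until one has \emph{proved} that the Wronskian is an antiderivative of a multiple of $\tilde B_{12}^2-\tilde B_{11}\tilde B_{22}$. That proof is precisely the missing idea, and it is the heart of the paper's argument: differentiating \eqref{2ndga} in $x_1$ and in $x_2$ produces the two linear equations \eqref{eqB1} satisfied by the kernel elements; cross-multiplying by $B_2$ and $-B_1$ and adding cancels all nonlinear terms and yields $\bigl((B_1)_xB_2-(B_2)_xB_1\bigr)_x=(\tilde B_2)_tB_1-(\tilde B_1)_tB_2=(\delta-\ga)\bigl(\tilde B_{12}^2-\tilde B_{11}\tilde B_{22}\bigr)$, after writing $(\tilde B_i)_t$ in terms of $\tilde B_{i1},\tilde B_{i2}$. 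Integrating in $x$ and invoking \eqref{Idwrons} then gives the closed form \eqref{Wro}, whose terms have denominator $D_\mu^3$, and only this much smaller expression is expanded (the $M_1,M_2,M_3$ computation, carried out with symbolic software) to reach \eqref{Wsimpl}. Nothing in your plan derives such a differential identity for the Wronskian, so it degenerates into an unguided brute-force expansion; moreover your ``final bookkeeping step'' understates what remains, since even after the structural collapse the paper still needs a nontrivial computer-assisted verification.

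The repair, staying with your reduced form, uses the same cancellation idea but with \eqref{Firstga} in place of \eqref{Idwrons}: by \eqref{2ndga} one has $B_{\mu,xx}=-\tilde B_{\mu,t}-3\mu B_\mu^2-B_\mu^3$, while $B_{\mu,x}(B_1)_x=\partial_{x_1}\bigl(\tfrac12B_{\mu,x}^2\bigr)$ can be evaluated in closed form by differentiating \eqref{Firstga} with respect to $x_1$. Substituting both into $B_1B_{\mu,xx}-B_{\mu,x}(B_1)_x$, all cubic and quartic terms cancel exactly, and using \eqref{MasssBG1} one is left with $\det W[B_1,B_2]=(\mu+B_\mu)(\tilde B_1)_t-\partial_t\partial_{x_1}\partial_x\log D_\mu$, an analogue of the paper's identity \eqref{Wro} with all terms again of denominator $D_\mu^3$; from there the explicit trigonometric--hyperbolic verification proceeds as in the paper. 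In either route the essential step is an exact cancellation coming from the equations satisfied by $B_\mu$ and its translation modes, not a divisibility ``forced'' by \eqref{Id_B2}; without such a step your proposal is incomplete.
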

\ms

\begin{proof}
We start with a very useful simplification. We claim that
\be\label{Wro}
\det W[B_1,B_2](x)  = -2(\al^2 +\bt^2)\Big[-(\mu + B_\mu)\tilde{B}_{11} + \partial^2_{x_1}\partial_x\log\Big(G^2_\mu + F^2_\mu\Big)\Big],
\ee
with $\tilde B_\mu=\tilde B_\mu(t,x; x_1,x_2)$ defined in (\ref{tBga}), and $\tilde B_j,~\tilde B_{ij},~~i,j=1,2,$ as in \eqref{Idwrons}. 
In order to prove the above simplification, we start from (\ref{2ndga}), and taking derivative with respect to $x_1$ and $x_2$, we get
\be\label{eqB1}
(B_1)_{xx} + (\tilde B_1)_t + 3B_\mu^2 B_1 + 6\mu B_\mu B_1 =0, \quad (B_2)_{xx} + (\tilde B_2)_t + 3B_\mu^2 B_2 + 6\mu B_\mu B_2=0.
\ee
Multiplying the first equation above by $B_2$ and the second by $-B_1$, and adding both equations, we obtain
\[
(B_1)_{xx}B_2 - (B_2)_{xx} B_1 + (\tilde B_1)_t B_2 -(\tilde B_2)_t B_1 =0,
\]
that is, 
\be\label{interm}
( (B_1)_x B_2 - (B_2)_x B_1)_x =  (\tilde B_2)_t B_1-(\tilde B_1)_t B_2.
\ee
On the other hand, since we are working with smooth functions, one has $B_\mu =\tilde B_1 + \tilde B_2$,
\[
B_1 = \tilde B_{11} + \tilde B_{12}, \quad B_2 = \tilde B_{12} + \tilde B_{22},
\]
and
\[
(\tilde B_1)_t = \delta  \tilde B_{11} +  \ga \tilde B_{12}, \quad (\tilde B_2)_t =  \delta\tilde B_{12} + \ga \tilde B_{22}.
\]
Substituting into (\ref{interm}), we get
\[
( (B_1)_x B_2 - (B_2)_x B_1)_x = (\delta- \ga) (\tilde B_{12}^2 -\tilde B_{11}\tilde B_{22}).
\]
\noindent
Now, integrating in $x$ and using the nonlinear identity \eqref{Idwrons} we get

\[
 \det W[B_1,B_2](x)  = -2(\al^2 +\bt^2)\Big[-(\mu + B_\mu)\tilde{B}_{11} + \partial^2_{x_1}\partial_x\log\Big(G^2_\mu + F^2_\mu\Big)\Big].
\]

\medskip
\noindent
Finally to prove \eqref{Wsimpl}, we write explicitly the two terms involved at the r.h.s. of equation above. We  will follow notation
of Appendix \ref{apIddificil}, but this time changing $\partial_t$ by $\partial_{x_1}$, and therefore defining 
$G:=G_\mu,~G_1:=G_x,~G_2:=G_{x_1},~G_3:=G_{xx_{1}},~G_4:=G_{x_{1}x_{1}},~G_5:=G_{xx_{1}x_{1}}$ and 
$F:=F_\mu,~F_1:=F_x,~F_2:=F_{x_1},~F_3:=F_{xx_{1}},~F_4:=F_{x_{1}x_{1}},~F_5:=F_{xx_{1}x_{1}}$. Hence we get,

\[
 -(\mu + B_\mu)=-\frac{\mu D_\mu -2\sqrt{2}(-F_1G+FG_1)}{D_\mu},
\]

\[
 \tilde{B}_{11}=\frac{2\sqrt{2}}{D_\mu^2}[-G^2(F_4G-2F_2G_2)-F^2(F_4G+2F_2G_2) + F^3G_4 + FG(2F^2_2 - 2G^2_2 + GG_4)],
\]
and finally
\be\label{1term}
-(\mu + B_\mu)\tilde{B}_{11}=-2\sqrt{2}\frac{M_1}{D_\mu^3},
\ee
\noindent
with
\be\label{M1term}\begin{aligned}
M_1:=\Big[\mu D_\mu -2\sqrt{2}(-F_1G+FG_1)\Big]&\Big[-G^2(F_4G-2F_2G_2)-F^2(F_4G+2F_2G_2)\\
&+ F^3G_4 + FG(2F^2_2 - 2G^2_2 + GG_4)\Big].
 \end{aligned}
\ee
Similarly, for the second term in \eqref{Wro} at the r.h.s. we have

\be\label{2term}\begin{aligned}
 &\partial^2_{x_1}\partial_x\log\Big(G^2_\mu + F^2_\mu\Big)=\frac{M_2}{D_\mu^3},
 \end{aligned}
\ee
with

\be\label{M2term}
\begin{aligned}
M_2:=&16(FF_1+GG_1)(FF_2+GG_2)^2-4D_\mu(FF_1+GG_1)(F_2^2+FF_4+G_2^2+GG_4)\\
&-8D_\mu^2(FF_2+GG_2)(F_1F_2+FF_3+G_1G_2+GG_3)\\
&+2D_\mu^2(2F_2F_3+F_1F_4+FF_5+2G_2G_3+G_1G_4+GG_5).\\
\end{aligned}
\ee

\ms
\noindent
Therefore putting together \eqref{1term} and \eqref{2term}, we get

\be\label{1paso}
\begin{aligned}
&-(\mu + B_\mu)\tilde{B}_{11} +\partial^2_{x_1}\partial_x\log\Big(G^2_\mu + F^2_\mu\Big) = \frac{-2\sqrt{2}M_1 + M_2}{D_\mu^3}.
\end{aligned}\ee

\noindent
Indeed, it is possible to see that the above numerator reduces to
\be\label{2paso}
 -2\sqrt{2}M_1 + M_2 = 2D_\mu M_3,
\ee
where $M_3$ is given by

\be\label{M3}\begin{aligned}
&M_3:=\\
&F^2[-2F_2F_3-F_1F_4+2G_2G_3-3G_1G_4+GG_5+\sqrt{2}\mu GF_4+2\sqrt{2}\mu F_2G_2]+ F^3(F_5-\sqrt{2}\mu G_4) \\
&+F[F_5G^2 + 2F_4GG_1 - 4F_3GG_2 + 4F_2G_1G_2 - 4F_2GG_3 - \sqrt{2}\mu(2F_2^2G-2GG_2^2+G^2G_4)\\
& + 2F_1(F_2^2-G_2^2+GG_4)] +G[-3F_1F_4G-2F_2^2G_1+2G_1G_2^2-2GG_2G_3-GG_1G_4+G^2G_5\\
& +\sqrt{2}\mu F_4G^2 + 2F_2(F_3G + 2F_1G_2 -\sqrt{2}\mu GG_2)].\\
\end{aligned}\ee
\noindent
We verify, using the symbolic software \emph{Mathematica}, that after substituting  $F's$ and $G's$ terms explicitly 
in $M_3$  and lengthy rearrangements, \eqref{M3}  simplifies as follows:

\be\label{M3simplify}\begin{aligned}
&2M_3 = \frac{-2\bt^3(\al^2+\bt^2)((\al^2+\bt^2)^2-4\mu^2(\al^2-\mu^2))}{\Delta_\mu^3}\\
&\Big[
\sinh(2\bt y_2) + \frac{4\bt^2\mu^2\cosh(2\bt y_2)}{(\al^2+\bt^2)^2-4\mu^2(\al^2-\mu^2)}
- \frac{\bt\Delta_\mu((\al^2+\bt^2)^2 - 2\mu^2(\al^2-\bt^2))\sin(2\al y_1)}{\al(\al^2+\bt^2)((\al^2+\bt^2)^2 - 4\mu^2(\al^2-\mu^2))}\\
&+\frac{4\bt^2\mu^2\Delta_\mu\cos(2\al y_1)}{(\al^2+\bt^2)((\al^2+\bt^2)^2 - 4\mu^2(\al^2-\mu^2))}\Big].\\
\end{aligned}\ee

\ms
\noindent
Finally we get
\be\label{3paso}
\begin{aligned}
-(\mu + B_\mu)\tilde{B}_{11} +\partial^2_{x_1}\partial_x\log\Big(G^2_\mu + F^2_\mu\Big) &= \frac{-2\sqrt{2}M_1 + M_2}{D_\mu^3} = \frac{2D_\mu M_3}{D_\mu^3}
=\frac{\eqref{Wsimpl}}{-2(\al^2 +\bt^2)}.
\end{aligned}\ee

\end{proof}

\begin{prop}\label{WW} The operator $\mathcal L_\mu$  defined in \eqref{LGE} and for every $\mu\in(0,\mu_{max})$ has a 
unique negative eigenvalue $-\la_0^2<0$, of multiplicity one, and  $\la_0=\la_0(\al,\bt,\mu,x_1,x_2,t)$.
\end{prop}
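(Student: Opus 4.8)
The plan is to count the negative eigenvalues of $\mathcal L_\mu$ through the Greenberg/Maddocks--Sachs criterion of Lemma \ref{Wr1}: the number of negative eigenvalues, counted with multiplicity, equals $\sum_{x\in\R}\dim\ker W[B_1,B_2](t;x)$. Thus, for each fixed $t$, it suffices to show that the Wronskian determinant \eqref{Wsimpl} vanishes at exactly one point $x_0$ and that there $\dim\ker W=1$. I work with the explicit formula of Lemma \ref{WMLe}, assuming $\al,\bt>0$ without loss of generality. Writing $S:=\al^2+\bt^2$ and $N:=(\al^2+\bt^2)^2-4\mu^2(\al^2-\mu^2)$, one checks that $N>0$ always (as a quadratic in $\mu^2$ its discriminant is $16[\al^4-S^2]<0$); together with $D_\mu=F_\mu^2+G_\mu^2>0$, $\Delta>0$ and $\bt>0$, this makes the scalar prefactor in \eqref{Wsimpl} a strictly positive function of $x$. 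Hence the zeros of $\det W$ coincide with those of the bracket
\[
P(x):=\sinh(2\bt y_2)+A\cosh(2\bt y_2)-B\sin(2\al y_1)+C\cos(2\al y_1),
\]
where $A,B,C$ are the constants appearing in \eqref{Wsimpl} and, for fixed $t$, $y_1=x+\delta t+x_1$ and $y_2=x+\ga t+x_2$ are affine in $x$ with unit slope.

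First I would pin down the sign of $A=4\bt^2\mu^2/N$. The inequality $A<1$ is equivalent to $0<(S-2\mu^2)^2=\Delta^2$, which holds precisely because $\mu<\mu_{max}$ (see \eqref{mumax}) forces $S\neq 2\mu^2$; and $A>0$ is clear. Writing $\sinh(2\bt y_2)+A\cosh(2\bt y_2)=\tfrac{1+A}{2}e^{2\bt y_2}-\tfrac{1-A}{2}e^{-2\bt y_2}$, both exponential coefficients are strictly positive, so $P(x)\to+\infty$ as $x\to+\infty$ and $P(x)\to-\infty$ as $x\to-\infty$, the oscillatory terms being bounded.

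The heart of the argument is to show that $P$ is strictly increasing. Differentiating and collecting terms into amplitude--phase form gives
\[
P'(x)=2\bt\sqrt{1-A^2}\,\cosh(2\bt y_2+\phi)-2\al\sqrt{B^2+C^2}\,\cos(2\al y_1-\psi),
\]
with $\tanh\phi=A$ and a suitable $\psi$. The decisive point is the amplitude identity $\bt^2(1-A^2)=\al^2(B^2+C^2)$, which I expect to be the main obstacle and which I would verify by direct computation: using $N-4\bt^2\mu^2=\Delta^2$ and $S^2-(\bt^2-\al^2)^2=4\al^2\bt^2$, both sides reduce to $\bt^2\Delta^2\big[(\bt^2-\al^2+2\mu^2)^2+4\al^2\bt^2\big]/N^2$, the cross terms cancelling so that the difference is identically zero. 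This is the exact Gardner analogue of the relation that makes the mKdV Wronskian monotone in \cite{AM}, and it may be checked with symbolic software as in Lemma \ref{WMLe}. Granting it, the two amplitudes coincide, say to a common value $\kappa>0$, so that
\[
P'(x)=\kappa\big[\cosh(2\bt y_2+\phi)-\cos(2\al y_1-\psi)\big]\ge 0,
\]
since $\cosh\ge 1\ge\cos$; moreover $P'$ vanishes only at the single point where $2\bt y_2+\phi=0$, so $P$ is strictly increasing.

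Being continuous, strictly increasing and surjective onto $\R$, the function $P$ has exactly one zero $x_0$, so $\det W[B_1,B_2](t;\cdot)$ vanishes only at $x_0$. There $W$ cannot be the zero matrix, for otherwise the nontrivial kernel elements $B_1,B_2$ and their derivatives would all vanish at $x_0$; hence $W(t;x_0)$ has rank one and $\dim\ker W(t;x_0)=1$. Therefore $\sum_{x\in\R}\dim\ker W(t;x)=1$, and Lemma \ref{Wr1} yields that $\mathcal L_\mu$ has exactly one negative eigenvalue, of multiplicity one, for every $t$ and every $\mu\in(0,\mu_{max})$; this is consistent with the explicit negative direction of Corollary \ref{B_0per}. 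Denoting this eigenvalue by $-\la_0^2$ with $\la_0=\la_0(\al,\bt,\mu,x_1,x_2,t)$ completes the proof.
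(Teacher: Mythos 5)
Your proposal runs along the same backbone as the paper's proof: the Greenberg/Maddocks--Sachs counting criterion of Lemma \ref{Wr1}, the explicit Wronskian of Lemma \ref{WMLe}, and the reduction to showing that the bracket in \eqref{Wsimpl} has exactly one zero in $x$, with $\pm\infty$ limits at $\pm\infty$. Where you genuinely depart from the paper is in the monotonicity step, and your version is the stronger one. The paper verifies positivity of $f_\mu'$ only by inspection in the two limiting regimes $\mu\to 0^+$ and $\mu^2\to\mu_{max}^{2-}$, which leaves the intermediate values of $\mu\in(0,\mu_{max})$ unaddressed; you instead prove the exact amplitude identity $\bt^2(1-A^2)=\al^2(B^2+C^2)$, which is correct: with $S:=\al^2+\bt^2$ and $N:=S^2-4\mu^2(\al^2-\mu^2)$ one has $N-4\bt^2\mu^2=(S-2\mu^2)^2=\Delta^2$, $N+4\bt^2\mu^2=S^2+4\mu^2(\bt^2-\al^2)+4\mu^4$, and $\bigl(S^2-2\mu^2(\al^2-\bt^2)\bigr)^2+16\al^2\bt^2\mu^4=S^2\bigl[S^2+4\mu^2(\bt^2-\al^2)+4\mu^4\bigr]$, so both sides equal $\bt^2\Delta^2\bigl[S^2+4\mu^2(\bt^2-\al^2)+4\mu^4\bigr]/N^2$. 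This yields $P'=\kappa\bigl[\cosh(2\bt y_2+\phi)-\cos(2\al y_1-\psi)\bigr]\ge 0$ with at most one zero, hence strict monotonicity uniformly in $\mu$, recovering the mKdV computation of \cite{AM} when $\mu=0$ (where $A=C=0$, $B=\bt/\al$). Your preliminary checks ($N>0$ for all $\mu$, and $0<A<1$ precisely because $\Delta\neq 0$) are also correct and are needed but left implicit in the paper. In short: same framework, but your key lemma closes a gap that the paper's limiting-case inspection leaves open.

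One caveat. Your final step --- that at the unique zero $x_0$ the matrix $W(t;x_0)$ has rank one --- is asserted rather than proved: for kernel elements of a \emph{fourth-order} operator, the simultaneous vanishing of $B_1,B_2,(B_1)_x,(B_2)_x$ at a single point is not a priori absurd (it would be for a second-order operator, but vanishing to first order at one point does not force a solution of a fourth-order ODE to be trivial). The paper addresses this by observing, from \eqref{eqB1}, that $B_1$ satisfies a second-order linear ODE with source $-(\tilde B_1)_t$ and invoking well-posedness; you should either reproduce an argument of this type or otherwise rule out the total vanishing of $W(t;x_0)$, since the Greenberg count requires $\dim\ker W(t;x_0)=1$ and not merely $\ge 1$.
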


\begin{proof}
We compute the determinant (\ref{WM}) required by Lemma \ref{Wr1}. From Lemma \ref{WMLe},  after a standard translation argument, 
we will denote $\tilde{y}_2 = y_2 + (\delta -\ga)t  + \tilde x_2$, and we just need to consider the behavior of the function
\be\label{fy2}
\begin{aligned}
&f_\mu(y_2) =f_{t,\al,\bt,\mu,\tilde x_2}(y_2) :=  \sinh(2\bt y_2) + \frac{4\bt^2\mu^2\cosh(2\bt y_2)}{(\al^2+\bt^2)^2-4\mu^2(\al^2-\mu^2)}\\
&- \frac{\bt\Delta((\al^2+\bt^2)^2 - 2\mu^2(\al^2-\bt^2))\sin(2\al\tilde{y}_2 )}{\al(\al^2+\bt^2)((\al^2+\bt^2)^2 
- 4\mu^2(\al^2-\mu^2))}
+\frac{4\bt^2\mu^2\Delta\cos(2\al \tilde{y}_2)}{(\al^2+\bt^2)((\al^2+\bt^2)^2 - 4\mu^2(\al^2-\mu^2))}.
\end{aligned}
\ee
for $\tilde x_2 := x_1 -x_2 \in \R$, and $\delta-\ga = -2(\al^2 +\bt^2).$

\medskip

A simple argument shows that for $y_2\in \R$ such that 

$$|\sinh (2\bt y_2)| > \frac{\bt\Delta((\al^2+\bt^2)^2 - 2\mu^2(\al^2-\bt^2))}{\al(\al^2+\bt^2)((\al^2+\bt^2)^2 
- 4\mu^2(\al^2-\mu^2))}  +\frac{4\bt^2\mu^2\Delta}{(\al^2+\bt^2)((\al^2+\bt^2)^2 - 4\mu^2(\al^2-\mu^2))},$$ 

\medskip
\noindent
$f_\mu$ has no root. Moreover, there exists $R_0=R_0(\al,\bt,\mu)>0$ such that, for all $y_2>R_0$ one has $f_\mu(y_2) >0$ and for all $y_2<-R_0$, $f_\mu(y_2)<0.$ 
Therefore, since $f_\mu$ is continuous, there is a root $y_0=y_0(t,\al,\bt,\mu,\tilde x_2)\in [-R_0, R_0]$ for $f_\mu$. 
Additionally,  we have that
\[
\begin{aligned}
&f_\mu^{'}(y_2) = 2\bt [ \cosh(2\bt y_2) + \frac{4\bt^2\mu^2 \sinh(2\bt y_2)}{(\al^2+\bt^2)^2-4\mu^2(\al^2-\mu^2)}\\
&-\frac{\Delta((\al^2+\bt^2)^2-2\mu^2(\al^2-\bt^2))\cos (2\al \tilde{y}_2)}{(\al^2+\bt^2)((\al^2+\bt^2)^2-4\mu^2(\al^2-\mu^2))}
-\frac{4\bt\al\mu^2\Delta\sin(2\al\tilde{y}_2)}{(\al^2+\bt^2)((\al^2+\bt^2)^2-4\mu^2(\al^2-\mu^2))}],
\end{aligned}
\]
\noindent
hence, two cases have to be considered. We remember here that $\mu\in(0,\mu_{max})$. In the first case, when $\mu\rightarrow0^+\equiv\epsilon,$ we have
that $f_\mu^{'}(y_2)\approx\cosh(2\bt y_2) - \cos(2\al (y_2 - 2(\al^2+\bt^2)t + \tilde x_2)) + O(\epsilon)>0$ if $y_2\neq 0$. On the other side, when 
$\mu^2\rightarrow\mu_{max}^{2-}\equiv\mu_{max}^2-\frac{\epsilon}{2}$, with
$\epsilon\lll1$, we have that 

\[
\begin{aligned}
f_\mu^{'}(y_2) &= 2\bt [ \cosh(2\bt y_2) + \sinh(2\bt y_2) - \frac{\epsilon^2\sinh(2\bt y_2)}{2\bt^2(\al^2+\bt^2-\epsilon)+\epsilon^2}\\
&-\frac{\epsilon(2\bt^4-\epsilon\bt^2+\al^2(2\bt^2+\epsilon))\cos(2\al\tilde{y}_2)}{(\al^2+\bt^2)(2\al^2\bt^2+2\bt^4-2\epsilon\bt^2+\epsilon^2)}
-\frac{2\al\bt\epsilon(\al^2+\bt^2-\epsilon)\sin(2\al\tilde{y}_2)}{(\al^2+\bt^2)(2\al^2\bt^2+2\bt^4-2\epsilon\bt^2+\epsilon^2)}]>0,\\\\
\end{aligned}
\]
\ms
\noindent
by simple inspection. Therefore, if $y_0\neq 0$ then it is unique and then
\[
\sum_{x\in \R} \dim \ker W[B_1, B_2] (t; x) = \dim \ker W[B_1,B_2](t; y_0 - \ga t  -x_2) =1,
\]
since $B_1$ or $(B_1)_x$ are not zero at that time. Indeed, it is enough to show that $W[B_1,B_2](t,x)$ is not identically zero, then $\dim \ker W[B_1,B_2]<2$. In order to prove this fact, 
note that from (\ref{eqB1}) $B_1$ solves, for $t,x_1,x_2\inÊ\R $ fixed, a second order linear ODE with source term $-(\tilde B_\mu)_t$. 
Therefore, by standard well-posedness results, both $B_1$ and $(B_1)_x$ cannot be identically zero at the same point.

\end{proof}

\medskip

We consider now some standard remarks. We can reduce the spectral problem to another independent of time. Indeed, 
from \eqref{dege} and after  translation and redefinition of the shift parameters $x_1$ and $x_2$ we can assume that 
\[
B_\mu=B_{\al,\bt,\mu}(0,x; x_1,0), \quad x_1\in [0, \frac{2\pi}{\al}].
\]
In what follows we assume that $B_\mu$ is given by the previous formula.


\begin{rem}\label{remCoer}
Let $z\in H^2(\R)$, and $B_\mu=B_{\al,\bt,\mu}$ be any Gardner breather. Now remembering the quadratic form \eqref{Qmu} associated to $\mathcal L_\mu$,
$\mathcal Q_\mu[z]  = \int_\R z \mathcal L_\mu[z]$ and from Lemma \ref{B1B2}, it is easy to see that
$\mathcal Q_\mu[B_1] =\mathcal Q_\mu[B_2]=0$. Moreover, 
inequality \eqref{Pos} means that $\Lambda_\al B_\mu$ is a positive direction for $\mathcal Q_\mu$ when $\mu\in(0,\mu_{max})$. 
Additionally,  from (\ref{Qmu}) $\mathcal Q_\mu$ is bounded below, namely
\[
\mathcal Q_\mu[z] \geq -c_{\al,\bt,\mu}\|z\|_{H^2(\R)}^2,
\]

\end{rem}

Let $ B_{-1} \in \mathcal S\backslash \{ 0\}$ be an eigenfunction associated to the unique negative eigenvalue of the operator $\mathcal L_\mu$, as stated in Proposition \ref{WW}. 
We assume that $ B_{-1} $ has unit $L^2$-norm, so $B_{-1}$ is now unique. In particular, one has $\mathcal L_\mu [ B_{-1}] =-\la_0^2 B_{-1} .$ It is clear from Proposition
\ref{WW} and Lemma \ref{B1B2} that the following result holds.

\begin{lem}\label{Coerci}
There exists a continuous function $\nu_0 =\nu_0(\al,\bt,\mu)$, 
well-defined and positive for all $\al,\bt>0,$ with $\mu\in(0,\mu_{max}),$ and such that, for all $z_0\in H^2(\R)$ satisfying
\be\label{Or1}
\int_\R z_0 B_{-1} =\int_\R z_0 B_1 =\int_\R z_0 B_2 =0,
\ee
then 
\be\label{coee}
\mathcal Q_\mu[ z_0] \geq \nu_0\| z_0\|_{H^2(\R)}^2.
\ee
\end{lem}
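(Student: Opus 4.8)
The plan is to combine a spectral ($L^2$) coercivity estimate, valid on the codimension-three subspace cut out by the orthogonality conditions \eqref{Or1}, with a G\aa rding-type inequality coming from the positivity of the principal symbol of $\mathcal L_\mu$, and then to balance the two by a one-parameter convexity argument that upgrades $L^2$-coercivity to the desired $H^2$-coercivity.

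First I would record the full spectral picture of $\mathcal L_\mu$. By the lemma on the continuous spectrum, the essential spectrum of $\mathcal L_\mu$ is contained in $[\tau_0,+\infty)$ for a threshold $\tau_0=\tau_0(\al,\bt)>0$ and contains no embedded eigenvalues; by Proposition \ref{WW} the only negative eigenvalue is $-\la_0^2$, simple, with unit eigenfunction $B_{-1}$; and by Lemma \ref{B1B2} the kernel is exactly $\spawn\{B_1,B_2\}$, with $0$ isolated. Hence there is a spectral gap $\kappa=\kappa(\al,\bt,\mu)>0$ with $\sigma(\mathcal L_\mu)\cap(0,\kappa)=\emptyset$. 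Since eigenfunctions of a self-adjoint operator attached to distinct eigenvalues are orthogonal, the range of the spectral projector $P$ onto the nonpositive part of the spectrum is precisely $\spawn\{B_{-1},B_1,B_2\}$. Therefore any $z_0\in H^2(\R)$ satisfying \eqref{Or1} lies in $\mathrm{Range}(I-P)$, and, as $H^2(\R)$ is the form domain, the spectral theorem yields the $L^2$-coercivity
\[
\mathcal Q_\mu[z_0]=\int_\R z_0\,\mathcal L_\mu[z_0]\;\geq\;\kappa\,\|z_0\|_{L^2(\R)}^2 .
\]

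Next I would prove a G\aa rding inequality for $\mathcal Q_\mu$ on all of $H^2(\R)$. Writing $\mathcal L_\mu=\mathcal L_0+\mathcal R_\mu$, the constant-coefficient principal part has Fourier symbol $p(\xi)=\xi^4+2(\al^2-\bt^2)\xi^2+(\al^2+\bt^2)^2=(\xi^2+\al^2-\bt^2)^2+4\al^2\bt^2$, which satisfies $p(\xi)\geq c_0(\al,\bt)(1+\xi^2)^2$ for a constant $c_0>0$ depending continuously on $\al,\bt>0$; by Plancherel this gives $\int_\R z\,\mathcal L_0[z]\geq c_0\|z\|_{H^2(\R)}^2$. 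The remainder $\mathcal R_\mu$, as displayed in \eqref{Qmu}, contributes only terms of the form (Schwartz coefficient)$\,\times\,(z_x^2,\,z_xz,\,z^2)$, so $|\int_\R z\,\mathcal R_\mu[z]|\leq C\|z\|_{H^1(\R)}^2$ with $C=C(\al,\bt,\mu)$; the interpolation inequality $\|z\|_{H^1}^2\leq\ve\|z\|_{H^2}^2+C_\ve\|z\|_{L^2}^2$ then absorbs this into the principal part, yielding
\[
\mathcal Q_\mu[z]\;\geq\;c_1\,\|z\|_{H^2(\R)}^2-c_2\,\|z\|_{L^2(\R)}^2\qquad(z\in H^2(\R)),
\]
with $c_1,c_2>0$ continuous in $(\al,\bt,\mu)$. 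Interpolating the two estimates, for $z_0$ as in \eqref{Or1} and any $\te\in(0,1)$,
\[
\mathcal Q_\mu[z_0]=\te\,\mathcal Q_\mu[z_0]+(1-\te)\,\mathcal Q_\mu[z_0]\geq \te\,c_1\|z_0\|_{H^2(\R)}^2+\big((1-\te)\kappa-\te c_2\big)\|z_0\|_{L^2(\R)}^2 ,
\]
and the choice $\te=\kappa/(\kappa+c_2)\in(0,1)$ makes the $L^2$-coefficient vanish, giving \eqref{coee} with $\nu_0:=\te c_1=\kappa c_1/(\kappa+c_2)>0$.

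The main obstacle is the uniform positivity of the spectral gap $\kappa$ and its continuous dependence on $(\al,\bt,\mu)$: Proposition \ref{WW} and Lemma \ref{B1B2} fix the sign structure of the discrete spectrum for each fixed parameter, but one must still guarantee that $0$ stays strictly separated from the positive spectrum and that the gap does not collapse as $(\al,\bt,\mu)$ ranges over $\al,\bt>0$, $\mu\in(0,\mu_{max})$. I would handle this by combining the isolatedness of the zero eigenvalue with the continuous (indeed analytic) dependence of the discrete eigenvalues of the self-adjoint family $\mathcal L_\mu$ on $(\al,\bt,\mu)$ and the continuity of the essential-spectrum threshold $\tau_0$; since $\kappa,c_1,c_2$ are then all positive and continuous throughout the admissible parameter range, so is $\nu_0$, as claimed.
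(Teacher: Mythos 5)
Your core argument is correct, and it is essentially a fully detailed version of what the paper's (very terse) proof asserts: the orthogonality conditions \eqref{Or1} place $z_0$ in the positive spectral subspace of the self-adjoint operator $\mathcal L_\mu$ (unique simple negative eigenvalue by Proposition \ref{WW}, kernel $\spawn\{B_1,B_2\}$ by Lemma \ref{B1B2}, essential spectrum bounded away from zero and zero isolated), which gives the $L^2$-coercivity with gap $\kappa$; a G\aa rding inequality for the principal part plus interpolation then upgrades this to $H^2$-coercivity through the convex-combination trick. All of these steps are sound, and they correctly fill in what the paper dismisses as ``clear from Remark \ref{remCoer} and the three orthogonality conditions.'' (Minor slip: the symbol of $\mathcal L_0$ is $\xi^4+2(\bt^2-\al^2)\xi^2+(\al^2+\bt^2)^2=(\xi^2+\bt^2-\al^2)^2+4\al^2\bt^2$, not with $\al^2-\bt^2$ in the middle term; this is harmless, since in either case $p(\xi)\geq \min\{4\al^2\bt^2,(\al^2+\bt^2)^2\}>0$ and $p(\xi)\geq c_0(1+\xi^2)^2$, consistent with the paper's description of the continuous spectrum.)

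The genuine gap is the dependence on the time and translation parameters. The operator $\mathcal L_\mu$, its quadratic form $\mathcal Q_\mu$ in \eqref{Qmu}, the eigenfunction $B_{-1}$, and the spectral gap $\kappa$ you construct depend not only on $(\al,\bt,\mu)$ but also on $(t,x_1,x_2)$, because the breather is a genuinely time-dependent (time-periodic) profile. Your argument therefore produces $\nu_0=\nu_0(\al,\bt,\mu,t,x_1,x_2)$, whereas the statement requires $\nu_0=\nu_0(\al,\bt,\mu)$, i.e.\ a bound uniform in $t,x_1,x_2$; this uniformity is exactly what is needed later, when the coercivity is applied along the modulated solution for all $t\in[0,T^*]$ in Proposition \ref{PropOrtog} and Theorem \ref{T1gardner}. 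The paper devotes the second half of its proof precisely to this point: using the normalization stated just before the lemma (space-time translation together with the periodicity identity \eqref{dege} reduce the problem to $B_\mu=B_{\al,\bt,\mu}(0,x;x_1,0)$ with $x_1\in[0,2\pi/\al]$), the constant becomes a positive continuous function of $x_1$ on a compact interval, hence has a positive minimum independent of $x_1,x_2,t$. Your continuity-and-perturbation machinery in the last paragraph would cover this if you include $x_1$ among the parameters and invoke compactness of the period cell, but as written the claim that $\nu_0$ depends only on $(\al,\bt,\mu)$ is not established.
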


\begin{proof}

The existence of a \emph{positive} constant $\nu_0=\nu_0(\al,\bt,\mu,x_1)$ such that (\ref{coee}) is satisfied is now clear
from Remark \ref{remCoer} and the three orthogonality conditions. Moreover, thanks to the periodic character of the  
variable $x_1$, and the nondegeneracy of the kernel,  we obtain a uniform, positive bound independent of 
$x_1,x_2$ and $t$, still denoted  $\nu_0$. The proof is complete.
\end{proof}

Since $B_{-1}$ is difficult to work with, we look for an  easier version of the previous result. 
We can easily prove, as in \cite[Proposition (4.11)]{AM}, 
that the eigenfunction $B_{-1}$ associated to the negative eigenvalue of $\mathcal L_\mu$ can be replaced by the breather itself, 
which has better behavior in terms of error controlling, unlike the first eigenfunction. This simple fact allows us to prove the nonlinear stability
result as in the standard approach, without using scaling modulations.
Recall that using the first eigenfunction as orthogonality condition does not guarantee a suitable control on the scaling modulation
parameter, because the control given by this direction might be not good enough to close the stability estimates. However, 
the breather can be used as an alternative direction, and all these previous arguments remain valid, exactly as in \cite{AM}, 
provided the Weinstein's sign condition 
\be\label{Cond3}
\int_\R B_{0,\mu} B_\mu >0  \qquad \hbox{\Big(or equivalently $\displaystyle{\int_\R B_{0,\mu} \mathcal L_\mu[B_{0,\mu}] <0}$\Big),} 
\ee

\ms
\noindent
do hold. Note that this precisely is what happens in \eqref{dirNegPos1} when $\mu\in(0,\mu_{max})$.
\ms

\begin{prop}\label{PropOrtog} Let $B_\mu=B_{\al,\bt,\mu}$ be any Gardner breather, and $B_1,B_2$ the corresponding kernel
of the associated operator $\mathcal L_\mu$. Let $\al,\bt>0$ and $\mu\in(0,\mu_{max})$.
There exists $\sigma_0>0$, depending on $\al,\bt,\mu$ only, such that, for any  $ z\in H^2(\R)$ satisfying
\be\label{OrthoK}
\int_\R B_1  z = \int_\R B_2 z =0,
\ee
one has
\be\label{Coer}
\mathcal Q_\mu[ z] \geq \sigma_0\| z\|_{H^2(\R)}^2 -\frac{1}{\sigma_0}\Big(\int_\R z B_\mu \Big)^2.
\ee
\end{prop}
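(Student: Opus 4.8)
The goal is to upgrade the coercivity of Lemma~\ref{Coerci}, which holds on the codimension-three space orthogonal to $B_{-1}$, $B_1$, $B_2$, into a coercivity estimate on the codimension-two space orthogonal only to the kernel directions $B_1$, $B_2$, at the cost of a single controllable negative term $-\frac{1}{\sigma_0}(\int_\R zB_\mu)^2$. The underlying principle is standard in this variational framework (Weinstein's argument, as invoked just before the statement): once one knows that $\mathcal L_\mu$ has exactly one negative eigenvalue (Proposition~\ref{WW}) and that there is an \emph{explicit} negative direction $B_{0,\mu}$ satisfying $\mathcal L_\mu[B_{0,\mu}]=-B_\mu$ together with the sign condition $\int_\R B_{0,\mu}B_\mu>0$ (Corollary~\ref{B_0per}, equations \eqref{negB0}–\eqref{dirNegPos1}), one may trade the hard-to-use eigenfunction $B_{-1}$ for the breather $B_\mu$ itself as the orthogonality direction that controls the unique negative direction of $\mathcal Q_\mu$.

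\medskip

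\textbf{Main steps.}

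First I would decompose an arbitrary $z\in H^2(\R)$ satisfying the two kernel orthogonality conditions \eqref{OrthoK} by splitting off its component along the breather: write
\[
z = a\,B_{0,\mu} + z_0,
\]
where the scalar $a$ is chosen so that $z_0$ is orthogonal to the negative eigenfunction, i.e. $\int_\R z_0 B_{-1}=0$. Since $\int_\R B_{0,\mu}B_{-1}\neq 0$ (the negative direction $B_{0,\mu}$ genuinely charges the lowest eigenspace; this nonvanishing must be checked and follows from \eqref{dirNegPos1} being strictly negative), this choice is uniquely determined:
\[
a = \frac{\int_\R z\,B_{-1}}{\int_\R B_{0,\mu}\,B_{-1}}.
\]
Next I would verify that $z_0$ still satisfies the kernel orthogonality conditions $\int_\R z_0 B_1=\int_\R z_0 B_2=0$; this requires that $B_{0,\mu}$ be orthogonal to $B_1,B_2$, which holds because $B_{0,\mu}$ is built from the scaling modes $\Lambda_\al B_\mu,\Lambda_\bt B_\mu$ and these are $\mathcal L_\mu$-orthogonal to the kernel, or it can be absorbed by a further (harmless) adjustment of $z_0$ inside $\ker\mathcal L_\mu$ without changing $\mathcal Q_\mu$. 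Once $z_0$ meets all three orthogonality conditions, Lemma~\ref{Coerci} yields $\mathcal Q_\mu[z_0]\ge \nu_0\|z_0\|_{H^2}^2$. Then I would expand
\[
\mathcal Q_\mu[z] = a^2\,\mathcal Q_\mu[B_{0,\mu}] + 2a\int_\R z_0\,\mathcal L_\mu[B_{0,\mu}] + \mathcal Q_\mu[z_0],
\]
use $\mathcal L_\mu[B_{0,\mu}]=-B_\mu$ so that the cross term becomes $-2a\int_\R z_0 B_\mu$, and relate $\int_\R z B_\mu$ to $a$ through $\int_\R z B_\mu = a\int_\R B_{0,\mu}B_\mu + \int_\R z_0 B_\mu$, exploiting the strict positivity $\int_\R B_{0,\mu}B_\mu>0$ from \eqref{negB0}.

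\medskip

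\textbf{Closing the estimate.}

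Collecting the pieces, the negative contribution $a^2\mathcal Q_\mu[B_{0,\mu}]<0$ must be reabsorbed. I would bound $a^2$ from above in terms of $(\int_\R zB_\mu)^2$ and $\|z_0\|_{H^2}^2$ using the two displayed identities and Cauchy--Schwarz on the terms involving $\int_\R z_0 B_\mu$, then apply Young's inequality to split any cross term of the form $a\cdot\|z_0\|_{H^2}$ so that a small fraction of the good term $\nu_0\|z_0\|_{H^2}^2$ absorbs it. After choosing the Young constants and $\sigma_0$ small enough (depending only on $\nu_0$, $\mathcal Q_\mu[B_{0,\mu}]$, $\int_\R B_{0,\mu}B_\mu$, and $\|B_{0,\mu}\|_{H^2}$, all of which depend only on $\al,\bt,\mu$), the estimate rearranges into
\[
\mathcal Q_\mu[z]\ \ge\ \sigma_0\|z\|_{H^2}^2 - \frac{1}{\sigma_0}\Big(\int_\R z\,B_\mu\Big)^2,
\]
after recombining $\|z\|_{H^2}^2 \lesssim a^2\|B_{0,\mu}\|_{H^2}^2 + \|z_0\|_{H^2}^2$. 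The main obstacle I anticipate is bookkeeping the constants so that a single $\sigma_0$ works simultaneously on both sides: the positivity of the whole construction rests delicately on the sign condition $\int_\R B_{0,\mu}B_\mu>0$ (equivalently \eqref{dirNegPos1}), so I would make sure that the final inequality degenerates gracefully and that $\sigma_0$ stays uniformly positive as $\mu$ ranges over $(0,\mu_{max})$ and as $x_1$ ranges over its period—precisely the uniformity already secured for $\nu_0$ in Lemma~\ref{Coerci}.
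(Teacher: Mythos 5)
Your setup is correct and is genuinely different from the paper's proof: you decompose $z$ along the explicit negative direction $B_{0,\mu}$, whereas the paper writes $z=\tilde z+mB_{-1}$ (plus kernel directions), separately decomposes $B_{0,\mu}=b_0+nB_{-1}+p_1B_1+p_2B_2$, derives the exact identity \eqref{deco}, and closes with the Cauchy--Schwarz-type inequality \eqref{condit} on the positive subspace. Your preliminary claims check out: $\int_\R B_{0,\mu}B_{-1}\neq 0$ does follow from $\mathcal Q_\mu[B_{0,\mu}]<0$ (otherwise $B_{0,\mu}$ would have no component along $B_{-1}$ and its quadratic form would be nonnegative). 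Note, however, that your first justification for $\int_\R B_{0,\mu}B_1=\int_\R B_{0,\mu}B_2=0$ is empty ($\mathcal L_\mu$-orthogonality to the kernel is automatic and says nothing about $L^2$-orthogonality); your fallback of correcting by kernel elements is the right move, and it is harmless because kernel components change neither $\mathcal Q_\mu$ (by self-adjointness) nor $\int_\R \cdot\,B_\mu$ (since $\int_\R B_iB_\mu=\partial_{x_i}M[B_\mu]=0$). Also, unlike the paper, you aim directly at the penalized inequality \eqref{Coer} rather than first reducing to the three-orthogonality case; that is legitimate.

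The genuine gap is in your closing paragraph. Writing $\gamma_0:=\int_\R B_{0,\mu}B_\mu>0$, you propose to bound $a^2\lesssim\big(\int_\R zB_\mu\big)^2+\|z_0\|_{H^2}^2$ via Cauchy--Schwarz, use this to reabsorb $a^2\mathcal Q_\mu[B_{0,\mu}]<0$, and absorb the leftover multiples of $\|z_0\|_{L^2}^2$ into ``a small fraction of $\nu_0\|z_0\|_{H^2}^2$''. This cannot close: that substitution produces a term of size $-\frac{2\|B_\mu\|_{L^2}^2}{\gamma_0}\|z_0\|_{L^2}^2$, whose coefficient is a fixed quantity of the breather with no smallness relative to the spectral gap $\nu_0$ of Lemma~\ref{Coerci} (and \eqref{negB0} shows $\gamma_0$ can be small, making it worse); moreover, once $a^2$ has been traded away, no positive $a^2$ term survives, so $\|z\|_{H^2}^2$ cannot be recovered on the right-hand side. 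The repair uses your two displayed identities in the opposite order, with no estimate on $a^2$ at all. Since $\mathcal Q_\mu[B_{0,\mu}]=-\int_\R B_{0,\mu}B_\mu=-\gamma_0$ (this is \eqref{posB0}, an immediate consequence of $\mathcal L_\mu[B_{0,\mu}]=-B_\mu$), substitute $\int_\R z_0B_\mu=\int_\R zB_\mu-a\gamma_0$ into the cross term \emph{before} estimating anything:
\[
\mathcal Q_\mu[z]=-a^2\gamma_0-2a\int_\R z_0B_\mu+\mathcal Q_\mu[z_0]
=+\,a^2\gamma_0-2a\int_\R zB_\mu+\mathcal Q_\mu[z_0],
\]
so the $a^2$-coefficient flips sign. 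Then a single Young inequality, $2|a|\,\big|\int_\R zB_\mu\big|\le\frac{\gamma_0}{2}a^2+\frac{2}{\gamma_0}\big(\int_\R zB_\mu\big)^2$, absorbed into half of the now-positive $a^2$ term and never touching $\nu_0\|z_0\|_{H^2}^2$, yields
\[
\mathcal Q_\mu[z]\ \ge\ \frac{\gamma_0}{2}a^2+\nu_0\|z_0\|_{H^2}^2-\frac{2}{\gamma_0}\Big(\int_\R zB_\mu\Big)^2,
\]
and since \eqref{OrthoK} forces the kernel coefficients to be $O(|a|)$, one has $\|z\|_{H^2}^2\le C\big(a^2+\|z_0\|_{H^2}^2\big)$, which gives \eqref{Coer} for a suitable $\sigma_0(\al,\bt,\mu)>0$. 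With this correction your argument is complete, and it is a clean alternative to the paper's route: it bypasses the paper's final Cauchy--Schwarz step \eqref{condit} entirely, at the price of relying on the exact algebraic relation $\mathcal Q_\mu[B_{0,\mu}]=-\gamma_0$.
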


\begin{proof}
We follow a similar strategy as stated in \cite{AM}, and we include here for the sake of completeness.  
Indeed, it is enough to prove that, under hypothesis $\mu\in(0,\mu_{max}),$ the conditions (\ref{OrthoK}) and the additional orthogonality condition $\int_\R zB_\mu =0$, one has
\[
\mathcal Q_\mu[ z] \geq \sigma_0\| z\|_{H^2(\R)}^2.
\]
In what follows we prove that we can replace $ B_{-1}$ by the breather $B_\mu$ in Lemma \ref{Coerci} and the result 
essentially does not change. Indeed, note that from (\ref{B0}), the function $B_{0,\mu}$ satisfies $\mathcal L_\mu[B_{0,\mu}] = - B_\mu$, 
and from (\ref{negB0}) and hypothesis $\mu\in(0,\mu_{max})$,
\be\label{posB0}
\int_\R B_{0,\mu} B_\mu = -\int_\R B_{0,\mu} \mathcal L_\mu[B_{0,\mu}] =-\mathcal Q_\mu[B_{0,\mu}] > 0.
\ee
The next step is to decompose $z$ and $B_{0,\mu}$ in $\spawn (B_{-1}, B_1,B_2)$ and the corresponding orthogonal subspace. 
Using the same notation that in \cite[Prop 4.13]{AM}, one has
\[
z =\tilde z +  m B_{-1}, \quad  B_{0,\mu}=  b_0 +  n  B_{-1} + p_1 B_1 + p_2 B_2, \quad m,n, p_1,p_2\in \R,
\]
where 
\[
\int_\R \tilde z  B_{-1} =\int_\R \tilde z B_1=\int_\R \tilde z B_2 =\int_\R   b_0 B_{-1} =\int_\R  b_0 B_1=\int_\R  b_0 B_2 =0. 
\]
Note in addition that 
\[
\int_\R B_{-1}B_1 =\int_\R B_{-1}B_2 =0.
\]
From here and the previous identities we have
\be\label{Qzz}
\mathcal Q_\mu[z] =\int_\R (\mathcal L_\mu [\tilde z] - m\la_0^2 B_{-1})(\tilde z +m B_{-1}) = \mathcal Q_\mu[\tilde z] -m^2 \la_0^2. 
\ee
Now, since $\mathcal L_\mu[B_{0,\mu}] =-B_\mu$, one has 
\begin{align}
0 & = \int_\R z B_\mu = -\int_\R z\mathcal L_\mu[B_{0,\mu}]  =\int_\R \mathcal L_\mu[\tilde z +m B_{-1}] B_{0,\mu}\nonu\\
&  = \int_\R (\mathcal L_\mu[\tilde z] -m\la_0^2 B_{-1})( b_0 +n B_{-1} + p_1 B_1 + p_2 B_2) \ =  \int_\R \mathcal L_\mu[\tilde z]  b_0 -mn\la_0^2.\label{zB}
\end{align}
On the other hand,  from Corollary \ref{B_0per},
\be\label{B0B}
\int_\R B_{0,\mu} B_\mu =  -\int_\R B_{0,\mu} \mathcal L_\mu[B_{0,\mu}] = -\int_\R ( b_0 + n B_{-1}) (\mathcal L_\mu[b_0] -n\la_0^2 B_{-1})
= -\mathcal Q_\mu[b_0] + n^2 \la_0^2. 
\ee
Substituting (\ref{zB}) and (\ref{B0B}) into (\ref{Qzz}), we get
\be\label{deco}
\mathcal Q_\mu[z] = \mathcal Q_\mu[\tilde z] -\frac{\displaystyle{\Big(\int_\R \mathcal L_\mu[\tilde z] b_0\Big)^2}}{\displaystyle{\int_\R B_{0,\mu}B_\mu 
+ \mathcal Q_\mu[b_0]}}.  
\ee
Note that from (\ref{posB0}) and (\ref{coee}) both quantities in the denominator are positive. 
Additionally, note that if $\tilde z =\la b_0$, with $\la\neq 0$, then
\[
\Big(\int_\R \mathcal L_\mu[\tilde z] b_0 \Big)^2 = \mathcal Q_\mu[\tilde z] \mathcal Q_\mu[b_0].
\]
In particular, if $\tilde z =\la b_0$,
\be\label{condit}
\frac{\displaystyle{\Big(\int_\R \mathcal L_\mu[\tilde z] b_0\Big)^2}}{\displaystyle{\int_\R B_{0,\mu}B_\mu + \mathcal Q_\mu[b_0]}} \leq a\,  
Q_\mu[\tilde z], \quad 0<a<1.
\ee
In the general case,  we get the same conclusion as before. Namely, choosing
\[
z =\tilde z +  m B_{-1} + q_1 B_1 + q_2 B_2, \quad m, q_1,q_2\in \R,
\]
\noindent
and  using the orthogonal decomposition induced by the scalar product 
$(\mathcal L_\mu \cdot ,Ê\cdot)_{L^2} $ on $\spawn (B_{-1}, B_1,B_2)$, we get

\[\begin{aligned}   
&\mathcal Q_\mu[z] = \int_\R z\mathcal L_\mu[z] = \int_\R (\tilde z +  m B_{-1} + q_1 B_1 + q_2 B_2)(\mathcal L_\mu [\tilde z] - m\la_0^2 B_{-1})\nonu\\
&=  \mathcal Q_\mu[\tilde z] -m^2 \la_0^2,\end{aligned}\]
\noindent
and following the same steps as above, we conclude. Therefore, we have proved (\ref{condit}) for all possible $\tilde z$. 
Finally, substituting into (\ref{deco}) and (\ref{Qzz}), $\mathcal Q_\mu[z] \geq (1-a) \mathcal Q_\mu[\tilde z] \geq 0$,  and 
$\mathcal Q_\mu[\tilde z] \geq m^2 \la_0^2$. We have, for some $C>0$,
\begin{align*}
\mathcal Q_\mu[z] & \geq  (1-a)\mathcal Q_\mu[\tilde z]  \geq \frac 12 (1-a)\mathcal Q_\mu[\tilde z] + (1-a)m^2 \la_0^2 \\
&\geq  \frac 1C(2 \|\tilde z\|_{H^2(\R)}^2 +  2m^2 \|B_{-1}\|_{H^2(\R)}^2) \geq \frac 1C\|z\|_{H^2(\R)}^2.
\end{align*}
\end{proof}
\medskip

\section{Proof of the Main Theorem}\label{6}

\medskip

In this section we prove a detailed version of  Theorem \ref{T1p8}.

\begin{thm}[$H^2$-stability of Gardner breathers]\label{T1gardner} Let $\al, \bt \in \R\backslash\{0\}$ and 
$\mu\in(0,\mu_{\max})$.
There exist positive parameters $\eta_0, A_0$, depending on $\al,\beta$ and $\mu$, such that the following holds.  Consider $w_0 \in H^2(\R)$,
and assume that there exists $\eta \in (0,\eta_0)$ such that 
\be\label{In}
\|  w_0 - B_{\mu}(t=0;0,0) \|_{H^2(\R)} \leq \eta.
\ee
Then there exist $x_1(t), x_2(t)\in \R$ such that the solution $w(t)$ of the Cauchy problem for the Gardner equation 
\eqref{GE}, with initial data $w_0$, satisfies
\be\label{Fn1}
\sup_{t\in \R}\big\| w(t) - B_{\mu}(t; x_1(t),x_2(t)) \big\|_{H^2(\R)}\leq A_0 \eta,
\ee
with
\be\label{Fn2}
\sup_{t\in \R}|x_1'(t)| +|x_2'(t)| \leq KA_0 \eta,
\ee
for a constant $K>0$.
\end{thm}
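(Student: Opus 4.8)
The plan is to run the standard modulation-plus-continuity (bootstrap) argument, combining the coercivity of $\mathcal Q_\mu$ from Proposition \ref{PropOrtog} with the conservation of the Lyapunov functional $\mathcal H_\mu$ \eqref{LyapunovGE} and of the mass $M$ \eqref{M1}. First I would set up the geometric decomposition. Since $B_1,B_2$ in \eqref{B12} are linearly independent Schwartz functions, the map $(x_1,x_2)\mapsto\big(\int_\R B_1\,(w-B_\mu),\,\int_\R B_2\,(w-B_\mu)\big)$ has invertible Jacobian at the breather, so by the implicit function theorem there is $\eta_0>0$ such that, as long as $\|w(t)-B_\mu(t;\cdot,\cdot)\|_{H^2(\R)}$ stays below a fixed threshold, one can select $C^1$ parameters $x_1(t),x_2(t)$ for which the remainder
\[
z(t):=w(t)-B_\mu(t;x_1(t),x_2(t))
\]
satisfies the orthogonality conditions $\int_\R B_1 z=\int_\R B_2 z=0$ of \eqref{OrthoK}. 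Because $M$, $E_\mu$ and $F_\mu$ are translation and time invariant, both $\mathcal H_\mu[B_\mu]$ (Lemma \ref{LyaBG}) and $M[B_\mu]$ (Lemma \ref{ME}) are constants, independent of $t,x_1,x_2$.

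Next I would bound the modulation speeds. Differentiating the two orthogonality identities in $t$ and using that $w$ solves \eqref{GE} while $B_\mu(t;x_1,x_2)$ solves it for frozen parameters, the intrinsic time derivatives cancel and one is left with a linear system for $(x_1'(t),x_2'(t))$ whose matrix is an $O(\|z\|_{H^2(\R)})$ perturbation of the invertible Gram matrix $\big(\int_\R B_iB_j\big)_{i,j}$ and whose right-hand side is linear in $z$. Inverting it yields $|x_1'(t)|+|x_2'(t)|\le KA_0\eta$ as in \eqref{Fn2}, the key point being that the right-hand side vanishes when $z=0$, since the breather already carries its own motion.

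The heart of the proof is the a priori control of $z$. Conservation of $\mathcal H_\mu$ together with the expansion of Lemma \ref{crit}, evaluated at the modulated breather and using translation invariance, gives
\[
\tfrac12\,\mathcal Q_\mu[z(t)]=\mathcal H_\mu[w_0]-\mathcal H_\mu[B_\mu]-\mathcal N_\mu[z(t)],
\]
and since the decomposition is already valid at $t=0$ with $\|z(0)\|_{H^2(\R)}\lesssim\eta$, boundedness of $\mathcal Q_\mu$ gives $\mathcal H_\mu[w_0]-\mathcal H_\mu[B_\mu]=O(\eta^2)$. Invoking the coercivity estimate \eqref{Coer} and $|\mathcal N_\mu[z]|\le K\|z\|_{H^2(\R)}^3$, I obtain
\[
\sigma_0\,\|z(t)\|_{H^2(\R)}^2\le C\eta^2+\frac{1}{\sigma_0}\Big(\int_\R z(t)\,B_\mu\Big)^2+C\|z(t)\|_{H^2(\R)}^3 .
\]
It remains to handle the last scalar direction, which is where mass conservation enters: writing $M[w(t)]=M[B_\mu]+\int_\R B_\mu z(t)+\tfrac12\|z(t)\|_{L^2(\R)}^2$ and using $M[w(t)]=M[w_0]$ with $M[w_0]-M[B_\mu]=O(\eta)$ yields $\big|\int_\R z(t)B_\mu\big|\le C\eta+\tfrac12\|z(t)\|_{L^2(\R)}^2$, hence $\big(\int_\R z B_\mu\big)^2\le C\eta^2+C\eta\|z\|_{H^2(\R)}^2+C\|z\|_{H^2(\R)}^4$.

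Finally I would close by a continuity argument: set $T^*=\sup\{t\ge 0:\ \|z(s)\|_{H^2(\R)}\le A_0\eta\ \ \forall s\le t\}$. On $[0,T^*]$ the cubic and quartic terms, as well as the cross term $\eta\|z\|_{H^2(\R)}^2$, are absorbed into $\sigma_0\|z\|_{H^2(\R)}^2$ once $\eta$ is small enough, leaving $\|z(t)\|_{H^2(\R)}^2\le C_*\eta^2$ with $C_*<A_0^2$; this strictly improves the bootstrap hypothesis, so $T^*=+\infty$, and the argument runs identically for $t<0$, giving \eqref{Fn1} with $A_0=\sqrt{C_*}$. I expect the main obstacle to be precisely the uncontrolled direction along $B_\mu$: the coercivity of Proposition \ref{PropOrtog} holds only modulo $(\int_\R z B_\mu)^2$, and the whole scheme hinges on the low-regularity mass conservation law to pin down this component while keeping every correction strictly higher order than $\|z\|_{H^2(\R)}^2$. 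A secondary difficulty is verifying that the modulation remains well defined and $C^1$ for all time, which the bootstrap guarantees a posteriori.
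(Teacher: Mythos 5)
Your proposal is correct and follows essentially the same route as the paper: modulation via the implicit function theorem to enforce orthogonality to $B_1,B_2$, conservation of $\mathcal H_\mu$ combined with the expansion of Lemma \ref{crit} and the coercivity of Proposition \ref{PropOrtog}, mass conservation to control the degenerate direction $\int_\R z B_\mu$, and a continuity (bootstrap) argument to close; the paper phrases the last step as a contradiction with a maximal stability time $T^*$, which is the same mechanism. No gaps.
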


\begin{rem}
The initial condition (\ref{In}) can be replaced by any initial breather profile of the form 
$\hat{B}_\mu := B_{\al,\bt,\mu}(t_0; x_1^0, x_2^0)$, with $t_0, x_1^0, x_2^0 \in \R$, 
thanks to the invariance of the equation under translations in time and space. 
In addition, a similar result is available for the \emph{negative} breather $-B_{\al,\bt,-|\mu|}$ which is also a solution of (\ref{GE}).
\end{rem}

\medskip

\begin{proof}[Proof of Theorem \ref{T1gardner}]
The proof of this result is completely similar to the proof of the $H^2-$stability of mKdV breathers \cite[Theorem (6.1)]{AM}, after following the same steps,
and once we guarantee  coercivity of the bilinear, form $\mathcal Q_\mu$ (see Proposition \ref{PropOrtog} and Lemma \ref{Coerci}), and the nonvanishing 
of the denominator $\int_\R B_{0,\mu}B_\mu + \mathcal Q_\mu[b_0]$ appearing in \eqref{deco}. 
For the sake of completeness, we include here a sketch with some steps.

\ms

1. From the continuity of the Gardner 
flow for $H^2(\R)$ data, there exists a time $T_0>0$ and continuous parameters $x_1(t), x_2(t)\in \R$, defined for all $t\in [0, T_0]$, 
and such that the solution $w(t)$ of the Cauchy problem for the Gardner equation (\ref{mKdV}), with initial data $w_0$, satisfies
\be\label{F0}
\sup_{t\in [0, T_0]}\big\| w(t) - B_\mu(t; x_1(t),x_2(t)) \big\|_{H^2(\R)}\leq 2 \eta.
\ee
The idea is to prove that $T_0 =+\infty$. In order to do this, let $K^*>2$ be a constant, to be fixed later. 
Let us suppose, by contradiction, that the \emph{maximal time of stability} $T^*$, namely
\begin{align}\label{Te}
T^* &:=   \sup \Big\{ T>0 \, \big| \, \hbox{ for all } t\in [0, T], \, \hbox{ there exist } \tilde x_1(t), \tilde x_2(t) \in \R  \hbox{ such that } \nonu \\
&  \qquad \qquad \sup_{t\in [0, T]}\big\| w(t) - B_\mu(t; \tilde x_1(t), \tilde x_2(t)) \big\|_{H^2(\R)}\leq K^* \eta \Big\},
\end{align}
is finite. It is clear from (\ref{F0}) that $T^*$ is a well-defined quantity. Our idea is to find a suitable contradiction to the assumption $T^*<+\infty.$

\ms

2. After that, we  apply a well known theory of modulation for the solution $w(t)$.%

\begin{lem}[Modulation]\label{Mod} There exists $\eta_0>0$ such that, for all $\eta\in (0, \eta_0)$, the following holds. 
There exist $C^1$ functions $x_1(t)$, $x_2(t) \in \R$, defined for all $t\in [0, T^*]$, and such that 
\be\label{z}
z(t) := w(t) - {B}_\mu (t), \quad B_\mu(t,x) := B_{\al,\bt,\mu} (t,x; x_1(t),x_2(t)) 
\ee
satisfies, for $t\in [0, T^*]$,
\be\label{Ortho}
\int_\R B_1(t; x_1(t),x_2(t)) z(t) = \int_\R B_2(t; x_1(t),x_2(t)) z(t)=0.
\ee
Moreover, one has
\be\label{apriori}
\|z(t)\|_{H^2(\R)} + |x_1'(t)| + |x_2'(t)| \leq K K^* \eta, \quad \|z(0)\|_{H^2(\R)} \leq K\eta,
\ee
for some constant $K>0$, independent of $K^*$. 
\end{lem}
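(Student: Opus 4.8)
The plan is to construct the modulation parameters by a standard application of the implicit function theorem, exploiting the fact (Lemma \ref{B1B2}) that $B_1$ and $B_2$ span the kernel of $\mathcal L_\mu$ and, in particular, are linearly independent. First I would fix $t$ and, for $w\in H^2(\R)$ in a small $H^2$-neighborhood of the breather, introduce the map $\Phi=(\Phi_1,\Phi_2)\colon H^2(\R)\times\R^2\to\R^2$ defined by
\[
\Phi_i(w;y_1,y_2):=\int_\R B_i(t;y_1,y_2)\,\big(w-B_{\al,\bt,\mu}(t;y_1,y_2)\big)\,dx,\qquad i=1,2.
\]
When $w=B_{\al,\bt,\mu}(t;y_1,y_2)$ one has $\Phi\equiv 0$, and on differentiating in $(y_1,y_2)$ at such a point the terms in which the derivative falls on $B_i$ vanish, so the Jacobian reduces to $-\big(\int_\R B_iB_j\big)_{i,j=1,2}$, i.e.\ minus the Gram matrix of $B_1,B_2$. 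Since these are linearly independent Schwartz functions, this matrix is negative definite, hence invertible. The implicit function theorem then yields, for $\eta_0$ small, a $C^1$ map $w\mapsto(x_1,x_2)$ producing the orthogonality conditions \eqref{Ortho}, with $(x_1,x_2)$ close to the reference shifts whenever $w$ is $H^2$-close to the breather.

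Next I would upgrade to time dependence. Substituting $w=w(t)$, composing the $C^1$ map above with the $H^2$-continuous flow $t\mapsto w(t)$ (cf.\ \cite{KPV} and Lemma \ref{dF10}) gives continuous $x_1(t),x_2(t)$ on $[0,T^*]$; to obtain $C^1$ regularity and the velocity bound in \eqref{apriori}, I would differentiate the relations $\int_\R B_i\,z=0$ in time. Writing $z(t)=w(t)-B_\mu(t)$ with $B_\mu(t,x)=B_{\al,\bt,\mu}(t,x;x_1(t),x_2(t))$, and using that $B_{\al,\bt,\mu}$ solves \eqref{GE} for frozen parameters together with the chain rule $\tfrac{d}{dt}B_\mu=(\partial_t B_{\al,\bt,\mu})(t;x_1,x_2)+x_1'B_1+x_2'B_2$, one obtains
\[
\partial_t z = w_t-(\partial_t B_{\al,\bt,\mu})-x_1'B_1-x_2'B_2 .
\]
Inserting this into $\tfrac{d}{dt}\int_\R B_i z=0$ produces a linear $2\times2$ system $M(t)\,(x_1',x_2')^{\top}=R(t)$, whose matrix $M(t)$ is a perturbation of the (invertible) Gram matrix of $B_1,B_2$ and whose right-hand side $R(t)$ is linear in $z$ and in the lower-order terms of $w_t$. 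Solving this system and estimating gives $\abs{x_1'(t)}+\abs{x_2'(t)}\le K\,\norm{z(t)}_{H^2(\R)}$.

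Finally, the a priori estimates follow by a bootstrap on $[0,T^*]$: the defining bound in \eqref{Te} gives $\norm{z(t)}_{H^2(\R)}\le KK^*\eta$ via the triangle inequality, after absorbing the $O(\eta)$ displacement of the breather produced by the modulation; combined with the previous paragraph this yields the velocity estimate, while the initial bound $\norm{z(0)}_{H^2(\R)}\le K\eta$ follows from the hypothesis \eqref{In} together with the Lipschitz dependence of the modulation map near $w=B_\mu(0;0,0)$. The main obstacle I anticipate is not the construction itself but the \emph{uniformity} of all constants in $t$, $x_1$ and $x_2$: the invertibility of $M(t)$ and the lower bounds on the associated determinants must hold independently of the shift parameters. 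I would handle this exactly as in Lemma \ref{Coerci}, using the periodicity in $x_1$ recorded in \eqref{dege} to reduce to the compact range $x_1\in[0,2\pi/\al]$, and the exponential spatial decay of $B_1,B_2$ to bound $R(t)$ uniformly.
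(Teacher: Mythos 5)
Your proposal is correct and follows essentially the same route as the paper: the paper's proof consists precisely of applying the Implicit Function Theorem to the functional $J_j(w(t),x_1,x_2)=\int_\R (w-B_\mu(t;x_1,x_2))B_j(t;x_1,x_2)\,dx$, which is exactly your map $\Phi$, with the invertibility of the (negative) Gram matrix of $B_1,B_2$ supplying the nondegeneracy. Your additional details --- differentiating the orthogonality relations in time to get the $C^1$ regularity and the velocity bounds, and using the periodicity in $x_1$ from \eqref{dege} plus the exponential decay of $B_1,B_2$ for uniformity of constants --- are the standard steps the paper leaves implicit, and they are carried out correctly.
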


The proof of this Lemma  is a classical application of the Implicit Function Theorem  to the function
\[
J_j(w(t), x_1,x_2) := \int_\R (w(t,x) - B_\mu (t,x; x_1,x_2)) B_j(t,x; x_1,x_2)dx, \quad j=1,2.
\]

\ms 

3. Now, we apply Lemma \ref{crit} to the function $w(t)$. Since $z(t)$ defined by (\ref{z}) is small, we get from (\ref{EE}) and Corollary \ref{Cor32}:
\be\label{Hut}
\mathcal H_\mu[w](t) = \mathcal H_\mu[B_\mu](t) + \frac 12 \mathcal Q_\mu[z](t) + N_\mu[z](t).
\ee 
Note that $|N_\mu[z](t)|\leq K \|z(t)\|_{H^1(\R)}^3$. On the other hand, by the translation invariance in space,
\[
 \mathcal H_\mu[B_\mu](t) =\mathcal H_\mu[B_\mu](0) =\hbox{constant}.
\]
Indeed, from \eqref{GEBre}, and for  $t_0(t)= \frac{x_1(t)-x_2(t)}{2(\al^2+\bt^2)}, \quad x_0(t) =  \frac{\delta x_2(t)-\ga x_1(t)}{2(\al^2 + \bt^2)},$ we have 
\[
B_\mu(t,x; x_1(t), x_2(t)) = B_\mu(t-t_0(t), x-x_0(t)).
\]
Since $\mathcal H_\mu$ 
involves integration in space of polynomial functions on $B_\mu, B_{\mu,x}$ and $B_{\mu,xx}$, we have 
\[
 \mathcal H_\mu[B_\mu(t, \cdot ; x_1(t),x_2(t))] = \mathcal H_\mu[B_\mu(t -t_0(t), \cdot -x_0(t); 0,0)] =   \mathcal H_\mu[B_\mu(t-t_0(t), \cdot ; 0,0)].
\] 
Finally, $ \mathcal H_\mu[B_\mu(t-t_0(t), \cdot ; 0,0)] = \mathcal H_\mu[B_\mu(\cdot , \cdot ; 0,0)] (t-t_0(t))$. Taking time derivative,
\[
 \partial_t \mathcal H_\mu[B_\mu(t, \cdot ; x_1(t),x_2(t))] =  \mathcal H_\mu^{'}[B_\mu(\cdot , \cdot ; 0,0)] (t-t_0(t)) \times (1-t_0'(t)) \equiv 0,
\]
hence $\mathcal H_\mu[B_\mu]$ is constant in time. Now we compare (\ref{Hut}) at times $t=0$ and $t\leq T^*$. From Lemma \ref{dF10} and  (\ref{crit}) we have
\[
\mathcal Q_\mu[z](t) \leq  \mathcal Q_\mu[z](0) + K\|z(t)\|_{H^2(\R)}^3 +K\|z(0)\|_{H^2(\R)}^3 \leq K\|z(0)\|_{H^2(\R)}^2+K\|z(t)\|_{H^2(\R)}^3. 
\]
Additionally, from (\ref{OrthoK})-(\ref{Coer}) applied this time to the time-dependent function $z(t)$, which satisfies (\ref{Ortho}), we get
\begin{align}
\|z(t)\|_{H^2(\R)}^2&  \leq     K\|z(0)\|_{H^2(\R)}^2 + K\|z(t)\|_{H^2(\R)}^3 + K\abs{\int_\R B_\mu(t) z(t)}^2\nonu\\
& \leq  K\eta^2 + K(K^*)^3 \eta^3 +K\abs{\int_\R B_\mu(t) z(t)}^2. \label{Map}
\end{align}
\noindent
4. {\bf Conclusion of the proof.} Using the conservation of mass (\ref{M1}), we have, after expanding $w=B_\mu+z$,
\begin{align*}
\abs{\int_\R  B_\mu(t)z(t)} &  \leq  K\abs{\int_\R  B_\mu(0)z(0)} +K\|z(0)\|_{H^2(\R)}^2+ K\|z(t)\|_{H^2(\R)}^2 \nonu \\
& \leq    K (\eta + (K^*)^2 \eta^2), \quad \hbox{ for each $t\in [0, T^*]$.}
\end{align*}
Substituting this last identity into (\ref{Map}), we get
\[
\|z(t)\|_{H^2(\R)}^2 \leq  K \eta^2 ( 1+ (K^*)^2 \eta^3) \leq \frac 12 (K^*)^2 \eta^2 ,
\]
by taking $K^*$ large enough. This last fact contradicts the definition of $T^*$ and therefore the stability property (\ref{Fn1}) holds true. 
Finally, (\ref{Fn2}) is a consequence of (\ref{apriori}).

\bigskip\bigskip\bigskip

\end{proof}

\appendix

\section{Proof of Lemma \ref{Id_GEq}.}\label{Id_BilinearGE}

From the Gardner equation \eqref{GE}, we select an ansatz for $w$ as
\[
w(t,x):=\phi_x,~~\phi(t,x):=\sqrt{2}i\log(\frac{{\bf G}(t,x)}{{\bf F}(t,x)}),\quad \text{where}\quad 
{\bf F} := F_\mu + iG_\mu,~~~ {\bf G} = F_\mu -iG_\mu = {\bf F}^{*}.
 \]
First note  that here $F_\mu$ and $G_\mu$ are not necessarily 
the same functions introduced in \eqref{GEBre} but generic ones for this ansatz. Then, substituting the 
above expression in \eqref{GE}, and using  Hirota's bilinear operators,  we arrive to the following conditions on ${\bf G}$ and  ${\bf F}$:

\bea\label{Bil_GE}
& D_t({\bf G}{\bf F}) + D_x^3({\bf G}{\bf F}) = 0,\nonu\\
& D_x^2({\bf G}{\bf F}) -i\sqrt{2}\mu D_x({\bf G}{\bf F}) = 0.
\eea
Then, dividing by ${\bf G}{\bf F}$ the second equation in \eqref{Bil_GE}, and taking into account the following identity\footnote{See \cite[p.152]{Mat} for further reading.}:
\[
\frac{D_x^2({\bf G}{\bf F})}{{\bf G}{\bf F}}= \partial_x^2\log({\bf G}{\bf F}) + (\partial_x\log(\frac{{\bf G}}{{\bf F}}))^2,
 \]
\noindent
 we obtain:
\bea\label{Bil_GE1}
& \frac{D_x^2({\bf G}{\bf F})}{{\bf G}{\bf F}} -i\sqrt{2}\mu \frac{D_x({\bf G}{\bf F})}{{\bf G}{\bf F}} &= 
\partial_x^2\log({\bf G}{\bf F}) + (\partial_x\log(\frac{{\bf G}}{{\bf F}}))^2 
-i\sqrt{2}\mu\partial_x\log(\frac{{\bf G}}{{\bf F}}) \nonumber\\
&& = \partial_x^2\log({\bf G}{\bf F}) + (\frac{w}{i\sqrt{2}})^2 - \mu w = 0.\nonu
\eea
Hence,
\[
w^2 =  2\frac{\partial^2}{\partial x^2}\log({\bf G}\cdot{\bf F}) - 2\mu w,
\]

\ms

\noindent
and the proof is complete. Indeed, with the same steps, it can be proved a similar result for the mKdV equation with NVBC. In fact, we arrive to  the
following relations for any solution of the mKdV with NVBC $\mu$:
\bea\label{Bil_mkdvNVBC}
& D_t({\bf G}{\bf F}) + D_x^3({\bf G}{\bf F}) +3\mu^2D_x({\bf G}{\bf F}) = 0,\nonu\\
& D_x^2({\bf G}{\bf F}) -i\sqrt{2}\mu D_x({\bf G}{\bf F}) = 0.
\eea
And, with the same steps than in the Gardner case, we obtain that any mKdV solution $u$ with NVBC $\mu$ satisfies:
\[
u^2 = \mu^2 + 2\frac{\partial^2}{\partial x^2}\log({\bf G}\cdot{\bf F}).
\]

\section{ Proof of Lemma \ref{Iddificil}.}\label{apIddificil}
Firstly and  for the sake of simplicity, we will use the following notation:

\begin{align}
& A_1:= (2(\bt^2-\al^2) + 5\mu^2),\quad A_2:= ((\al^2+\bt^2)^2 + 6\mu^2(\bt^2-\al^2 + \frac{3}{2}\mu^2)),\nonu\\
&\nonu\\
&\Delta=\al^2+\bt^2-2\mu^2,\quad e^{z} = \cosh(z) + \sinh(z),\nonu\\
&\nonu\\
&D:= f^2 + g^2,\quad \text{where}\quad f, g~~\text{and derivatives are given by:}\nonu\\
\end{align}


\begin{align}
&  g=\frac{\bt\sqrt{\al^2+\bt^2}}{\al\sqrt{\Delta}}\sin(\al y_1) -  \frac{\sqrt{2}\bt\mu e^{\bt y_2}}{\Delta},\\
&  g_1:=g_x=\frac{\bt\sqrt{\al^2+\bt^2}}{\sqrt{\Delta}}\cos(\al y_1) -  \frac{\sqrt{2}\bt^2\mu e^{\bt y_2}}{\Delta},
\end{align}
\begin{align}
&  g_2:=g_t=\frac{\bt\delta\sqrt{\al^2+\bt^2}}{\sqrt{\Delta}}\cos(\al y_1) -  \frac{\sqrt{2}\bt^2\ga\mu e^{\bt y_2}}{\Delta},\\
&  g_3:=g_{xt}=-\frac{\al\bt\delta\sqrt{\al^2+\bt^2}}{\sqrt{\Delta}}\sin(\al y_1) -  \frac{\sqrt{2}\bt^3\ga\mu e^{\bt y_2}}{\Delta},
\end{align}
\begin{align}
&  g_4:=g_{xx}=-\frac{\al\bt\sqrt{\al^2+\bt^2}}{\sqrt{\Delta}}\sin(\al y_1) -  \frac{\sqrt{2}\bt^3\mu e^{\bt y_2}}{\Delta},\\
&  g_5:=g_{xxt}=-\frac{\al^2\bt\delta\sqrt{\al^2+\bt^2}}{\sqrt{\Delta}}\cos(\al y_1) -  \frac{\sqrt{2}\bt^4\ga\mu e^{\bt y_2}}{\Delta},\\
&\nonu\\
&  f=\cosh(\bt y_2)-\frac{\sqrt{2}\bt\mu}{\al\sqrt{\Delta}}\cos(\al y_1+\arctan(\bt/\al)),\\
&  f_1:=f_x=\bt\sinh(\bt y_2)+\frac{\sqrt{2}\bt\mu}{\sqrt{\Delta}}\sin(\al y_1+\arctan(\bt/\al)),
\end{align}
\begin{align}
&  f_2:=f_t=\bt\ga\sinh(\bt y_2)+\frac{\sqrt{2}\bt\delta\mu}{\sqrt{\Delta}}\sin(\al y_1+\arctan(\bt/\al)),\\
&  f_3:=f_{xt}=\bt^2\ga\cosh(\bt y_2)+\frac{\sqrt{2}\al\bt\delta\mu}{\sqrt{\Delta}}\cos(\al y_1+\arctan(\bt/\al)),\\
&  f_4:=f_{xx}=\bt^2\cosh(\bt y_2)+\frac{\sqrt{2}\al\bt\mu}{\sqrt{\Delta}}\cos(\al y_1+\arctan(\bt/\al)),\\
&  f_5:=f_{xxt}=\bt^3\ga\sinh(\bt y_2)-\frac{\sqrt{2}\al^2\bt\delta\mu}{\sqrt{\Delta}}\sin(\al y_1+\arctan(\bt/\al)).\label{notacionFG}
 \end{align}
 \noindent

From the explicit expression of the mKdV breather with NVBC \eqref{BmksdNVBC} but now written in terms of the above derivatives (B.2)-\eqref{notacionFG}, 
we obtain that:

\begin{align}\label{Btexp}
&B   = \mu + 2\sqrt{2}\frac{fg_1-f_1g}{D} 
\qquad\text{and}\qquad \tilde B_t  = 2\sqrt{2}\frac{fg_2-f_2g}{D}.
\end{align}

Moreover, from  \eqref{massBnvbc}, we also have an equivalent expression for the quantity $(\mathcal M_{\al,\bt})_t $ 
\begin{align}\label{expandMt}
&(\mathcal M_{\al,\bt})_t  = -\frac{4}{D^2}(gg_1 + ff_1)(gg_2 + ff_2) + \frac{2}{D}\Big(gg_3 + g_1g_2 + ff_3 + f_1f_2\Big),
\end{align}
\ms

and therefore from \eqref{Btexp} and \eqref{expandMt},


\begin{align}\label{2MtB}
&2(\mathcal{M}_{\al,\bt})_tB  = \frac{N_2}{f^2D^3},
\end{align}
\noindent
where
\begin{align}\label{N2}
&N_2:=-f^2\Big(16\sqrt{2}(g_1f-gf_1)(ff_1+gg_1)(ff_2 + gg_2)\nonu\\
&-8\sqrt{2}D(g_1f-gf_1)(gg_3 + g_1g_2 + ff_3 + f_1f_2)\nonu\\
&+8\mu D(f_1f+gg_1)(ff_2 + gg_2)- 4D^2\mu(gg_3 + g_1g_2 + ff_3 + f_1f_2)\Big).
\end{align}

Now, we compute $B_{xt}$. First we get

\begin{align}
&B_x = -\frac{4\sqrt{2}}{fD^2}\Big((gfg_1-g^2f_1)(fg_1-gf_1)\Big) + \frac{2\sqrt{2}}{fD}\Big(f^2g_4 - 2g_1ff_1 + 2gf_1^2-gff_4\Big),\label{Bx}
\end{align}
\noindent
and then

\begin{align}\label{Bxt}
&B_{xt} = 4\sqrt{2}\frac{N_1}{f^2D^3},
\end{align}
\noindent
where 
\begin{align}\label{N1}
&N_1:=\Big(4g^2(g_1f-gf_1)^2(g_2f-gf_2)\nonu\\
&-Dg(g_1f-gf_1)(g_3f^2-g_2ff_1-g_1ff_2+2gf_1f_2-gff_3)\nonu\\
&-D(g_1f-gf_1)(g_1g_2f^2+gg_3f^2-2gg_2ff_1-2gg_1ff_2+3g^2f_1f_2-g^2ff_3)\nonu\\
&-Dg(g_2f-gf_2)(g_4f^2-2g_1ff_1+2gf_1^2-gff_4)\nonu\\
&+D^2\Big(\frac{1}{2}(g_5f^3-g_4f^2f_2-g_2f^2f_4-gf^2f_5-2g_3f^2f_1)\nonu\\
&+g_2ff_1^2+2g_1ff_1f_2-3gf_1^2f_2-g_1f^2f_3+2gff_1f_3+gff_2f_4\Big)\Big).
\end{align}

Now, we verify by using the symbolic software \emph{Mathematica} that, after expanding $f's$ and $g's$ terms (B.2)-\eqref{notacionFG} and lengthy rearrangements, 
the sum $N_1+N_2$  simplifies as follows:

\begin{align}\label{N1N2}
&N_1 + N_2 =f^2D^2\cdot A_1 2\sqrt{2}(g_2f-gf_2) + f^2D^2\cdot A_2 2\sqrt{2}(g_1f-gf_1).\\
\end{align}

Finally, remembering \eqref{Btexp}, we have that:

\begin{align}\label{bxtmtb}
&B_{xt} + 2(\mathcal{M}_{\al,\bt})_tB = \frac{N_1 + N_2}{f^2D^3}  \nonu\\
&=\frac{f^2D^2\cdot A_1 2\sqrt{2}(g_2f-gf_2)}{f^2D^3} + \frac{f^2D^2\cdot A_2 2\sqrt{2}(g_1f-gf_1)}{f^2D^3}\nonu\\
&=A_12\sqrt{2}\frac{(g_2f-gf_2)}{D} + A_22\sqrt{2}\frac{(g_1f-gf_1)}{D}=A_1 \tilde{B}_t + A_2(B-\mu).
\end{align}

\medskip

\end{document}